\numberwithin{equation}{section}
\newtheorem{theorem}{Theorem}
\newtheorem{coro}[theorem]{Corollary}
\newtheorem{definition}[theorem]{Definition}
\newtheorem{prop}[theorem]{Proposition}
\theoremstyle{definition}
\newcommand{\RR}{{\mathbb R}}
\newcommand{\R}{{\mathbb R}}
\newcommand{\N}{{\mathbb N}}
\newcommand{\NN}{{\mathbb N}}
\newcommand{\A}{\mathcal{A}}
\newcommand{\al}{{\alpha}}
\newcommand{\gm}{{\gamma}}
\newcommand{\de}{{\delta}}
\newcommand{\ph}{{\varphi}}
\newcommand{\be}{{\beta}}
\newcommand{\lm}{{\lambda}}
\newcommand{\bs}[1]{\langle #1\rangle}
\newcommand{\ra}{{\rightarrow}}
\newcommand{\dd}{{\partial}}
\newcommand{\ab}[1]{\left( #1\right)}
\newcommand{\av}[1]{\left\vert #1\right\vert}
\newcommand{\aV}[1]{\left\Vert #1\right\Vert}
\newcommand{\qand}{{\quad\mathrm {and}\quad}}
\newcommand{\vol}{{\mathrm {vol}}}
\newcommand{\si}{{\sigma}}
\newcommand{\se}{\sigma_{ \mathrm{ess}}}
\newcommand{\SC}{(\mbox{SC}_\infty)}
\newcommand{\SI}{(\mbox{SI}_\infty)}
\DeclareMathOperator{\supp}{supp}
\title[]{Unbounded
Laplacians on Graphs:\\
Basic Spectral Properties and the Heat
Equation}
\author[]{Matthias Keller$^1$}
\author[]{Daniel Lenz$^2$}
\address{$^1$ Mathematisches Institut, Friedrich Schiller Universit\"at Jena, D-07743 Jena, Germany, m.keller@uni-jena.de}
\address{$^2$ Mathematisches Institut, Friedrich Schiller Universit\"at Jena,
D-07743 Jena, Germany, daniel.lenz@uni-jena.de, URL: http://www.analysis-lenz.uni-jena.de/
}
\begin{document}
\maketitle

\begin{abstract}We discuss Laplacians on graphs in a framework of regular Dirichlet forms. We focus on
  phenomena related to unboundedness of the Laplacians.  This includes (failure of) essential selfadjointness, absence of essential spectrum and stochastic incompleteness.
\end{abstract}





\section*{Introduction} The study of  Laplacians on graphs is  a well established topic of research (see e.g. the monographs \cite{Chu,Col} and references therein). Such operators can be seen as discrete analogues to Schr\"odinger operators. Accordingly their spectral theory has received quite some attention. Such operators also arise  as generators of symmetric Markov processes and  they appear in the study of heat equations on discrete structures. Recently, certain themes  related to unboundedness properties of such operators have become a focus of attention. These themes include
\begin{itemize}
\item definition of the operators and essential selfadjointness,
\item absence of essential spectrum,
\item stochastic incompleteness.
\end{itemize}
In this paper we want to
survey recent developments  and  provide some new results. Our principle goal is to make these topics accessible to non-specialists by providing a somewhat gentle and introductory discussion.
\medskip

Let us be more precise. We consider a graph with weights on edges and vertices. The weights can be seen to give a generalized vertex degree.
\medskip

There is an obvious  way to formally  associate a symmetric nonnegative operator to such a  graph.  If the generalized vertex degrees are uniformly bounded this operator is bounded and all formal expressions make sense. If the generalized vertex degrees are not uniformly bounded  already the definition of a self adjoint operator is an issue. This issue can be tackled by proving essential selfadjointness of the formal operator  on the set of functions with compact support. This was done for locally finite weighted graphs by Jorgensen in \cite{Jor} and  for locally finite graphs by Wojciechowski in \cite{Woj1} (see \cite{Woj2} as well)  and Weber in \cite{Web}. These results require local finiteness and do not allow for weights on the corresponding $\ell^2 $ space. As discussed by Keller/Lenz in \cite{KL1} it is possible to get  rid of the local finiteness requirement and to   allow for weighted spaces  by using   Dirichlet forms. The corresponding results give a nonnegative  selfadjoint (but not necessarily essentially selfadjoint)  operator in quite some generality and provide a criterion for essential selfadjointness covering the  earlier results of \cite{Jor,Web, Woj1}. These topics are discussed in Section~1.\medskip

Having an unbounded nonnegative operator at ones disposal one may then wonder about its basic spectral features. These basic features include the position of the infimum of the spectrum and the existence of essential spectrum. Both issues can be approached via isoperimetric inequalities. In fact, lower bounds for the spectrum have  been considered by  Dodziuk \cite{Dod0} and Dodziuk/Kendall \cite{DK}. For planar graphs explicit estimates for the isoperimetric constant and hence for the spectrum can be found for instance in \cite{HJL,HiShi,KP,Mo,Ura}. Triviality of the essential spectrum for general graphs has been considered by Fujiwara \cite{Fuj}. The corresponding results deal with bounded operators only. (They allow for unbounded vertex degree but then force boundedness of the operators by introducing weights on the corresponding $\ell^2$ space.)  Still, the methods can be used to provide lower bounds on the spectrum and prove emptiness of the essential spectrum for unbounded Laplacians as well. For locally finite graphs this has been done by Keller in \cite{Kel}. Here, we present a generalization of the results of \cite{Kel} to the general setting of regular Dirichlet forms. This generalization also extends  the results of \cite{Fuj, DK} to our setting. This is discussed in Section~5.
\medskip

Finally, we  turn to  a (possible) consequence  of  unboundedness in the study of the heat equation viz stochastic incompleteness.  Stochastic incompleteness describes the phenomenon that mass vanishes in a diffusion process.  While this may a priori not seem to be  connected to unboundedness, it turns out to be connected. This has already been observed by Dodziuk/Matthai  \cite{DM}  and Dodziuk \cite{Dod} in that they show stochastic completeness for certain bounded operators on graphs.
A somewhat more structural connection is provided  by our discussion  below.  For locally finite graphs stochastic completeness has recently been investigated by Weber in \cite{Web} and Wojciechowski \cite{Woj1}. In fact, Weber presents sufficient conditions and Wojciechowski gives a characterization of stochastic incompleteness. This characterization is inspired by corresponding work of Grigor'yan on manifolds \cite{Gri} (see work of Sturm \cite{Stu} for related results as well). As shown in \cite{KL1} this characterization can be extended to regular Dirichlet forms. Details are discussed in  Section~8. There, we also provide some further background extending \cite{KL1}. Let us mention that this circle of ideas is strongly connected to questions concerning uniqueness of Markov process with given generator as discussed by Feller in \cite{Fel} and Reuter in \cite{Reu}. We take this opportunity to  mention the  very recent survey \cite{Woj3}  of Wojciechowski giving a thorough discussion of stochastic incompleteness for manifolds and graphs (with edge weight constant to one).\medskip

While our basic aim is to study unbounded Laplacians we complement our results by characterizing boundedness of the Laplacians in question in Section~3.\medskip

For a related study of basic spectral properties in terms of generalized solutions we refer the reader to \cite{HK}. \medskip

The paper is organized as follows. In Section~1 we  introduce our operators and discuss basic properties. Section~2 contains a useful minimum principle and some of its consequences.  Boundedness of the Laplacians in question is characterized  in Section~3.
A useful tool, the so called co-area formulae are investigated in Section~4. They are used in Section~5 to provide an isoperimetric inequality which is then used to study bounds on the infimum of the (essential) spectrum. This allows us in particular to characterize emptiness of the essential spectrum. The connection to Markov processes is discussed in Section~7. A characterization of stochastic incompleteness is given in Section~8.

\section{Graph Laplacians and  Dirichlet forms} \label{Graph}

Throughout $V$ will be a countably infinite  set.

\subsection{Weighted graphs}
We will deal with weighted graphs with vertex set~$V$.  A symmetric weighted graph over $V$ is  a pair  $(b,c)$ consisting of  a map $  b : V\times V\longrightarrow [0,\infty)$ with $b(x,x) =0$ for all $x\in V$ and a map $c : V\longrightarrow [0,\infty)$
satisfying the following two properties:

\begin{itemize}
\item[(b1)] $b(x,y)= b(y,x)$ for all $x,y\in V$.

\item[(b2)] $\sum_{y\in V} b(x,y) <\infty$ for all $x\in V$.
\end{itemize}
Then $b$ is called the edge weight and $c$ is called  killing term.\medskip

We  consider  $(b,c)$ or rather  the triple  $(V,b,c)$ as a weighted  graph with vertex set $V$ in the following way: An $x\in V $ with
$c(x)\neq 0$ is  thought to be connected to the point $\infty$ by an edge
with weight $c(x)$. Moreover, $x,y\in V$ with $b(x,y)>0$ are thought to be
connected by an edge with weight $b(x,y)$.   Vertices $x,y\in V$ with $b(x,y)
>0$ are called neighbors.  More generally, $x,y\in V$ are called connected if
there exist $x_0,x_1,\ldots,x_n \in V$ with $b(x_i, x_{i+1}) >0$,
$i=0,\ldots, n$ and $x_0 = x$, $x_n = y$. This allows us to define connected
components of $V$ in the obvious way.\medskip

\noindent Two examples have attracted particular attention.\medskip

\noindent\textbf{Example (Locally finite graphs):} Let  $(V,b,c)$ be a weighted graph  with $c\equiv 0$ and  $b(x,y)\in\{0,1\}$ for all $x,y\in V$. We can then think of the $(x,y)\in V\times V$ with $b(x,y)=1$ as connected by an edge with weight $1$. The condition (b2) then implies that any $x\in V$ is connected to only finitely many $y\in V$. Such graphs are known as locally finite graphs. This is the class of examples studied in \cite{Kel,Fuj,Woj1,Web}.\medskip

\noindent\textbf{Example (Locally finite weighted graphs):} Let $(V,b,c)$ be a weighted graph with $c\equiv 0$ and $b$ satisfying
$$ \sharp \{ y : b(x,y)\neq 0 \}< \infty$$
for all $x\in V$. Then, $(V,b,c)$ is called a locally finite weighted graph. This is the class of examples studied in \cite{Dod,Jor}.

\subsection{Dirichlet forms on countable sets}
 Let $m$ be a measure on $V$ with full support
(i.e., $m$ is a map $m :V\longrightarrow (0,\infty)$). Then, $(V,m)$ is a measure space.   A particular example is given by $m\equiv 1$.  We will deal exclusively with real valued functions. Thus, $\ell^p (V,m)$, $0<p<\infty$, is defined by
$$ \{ u : V\longrightarrow \RR: \sum_{x\in V}  m(x)|u(x)|^p <\infty\}.$$
Obviously, $\ell^2 (V,m)$ is a Hilbert space with inner product $\langle \cdot,\cdot\rangle = \langle\cdot,\cdot\rangle_m$ given by
$$\langle u, v\rangle := \sum_{x\in V} m(x)u(x) v(x)\;\:\mbox{and norm}\;\:\aV{u}:=\langle u, u\rangle^\frac{1}{2}.$$
Moreover we denote by $\ell^\infty(V)$ the space of bounded functions on $V$. Note that this space does not depend on the choice of $m$. It is equipped with the supremum norm $\|\cdot\|_\infty$ defined by
$$\|u\|_\infty :=\sup_{x\in V} |u(x)|.$$
A symmetric nonnegative form on
$(V,m)$ is given by a dense  subspace $D$ of $\ell^2 (V,m)$ called the domain of the
form and a map
$$ Q : D\times D \longrightarrow \RR  $$
with $Q(u,v) = Q(v,u)$ and $Q(u,u)\geq 0$ for all $u,v\in D$.\medskip

Such a map is already determined by its values on the diagonal $\{(u,u) : u\in D\}\subseteq D\times D$. This motivates to consider the restriction of $Q$ to the diagonal as an object on its own right. Thus,  for $u\in
\ell^2 (V,m)$ we then define $Q(u)$ by
$$Q(u):= \left\{
           \begin{array}{ll}
             Q(u,u) &: u\in D, \\
             \infty &:  u\not\in D. \\
           \end{array}
         \right.
$$
If $\ell^2 (V,m)\longrightarrow [0,\infty]$,
$u\mapsto Q(u)$ is lower semicontinuous $Q$
is called closed.  If $Q$ has a closed extension it is called closable and the
smallest closed extension is called the closure of $Q$.\medskip

A map $C:   \RR\longrightarrow \RR $ with $C(0) =0$ and
$|C(x) - C(y)|\leq |x - y|$ is  called a normal contraction. If $Q$ is both
closed and satisfies $Q(Cu) \leq Q(u)$ for all normal contractions $C$ and all $u\in \ell^2 (V,m)$ it is
called a Dirichlet form on $(V,m)$ (see \cite{BH,Dav1,Fuk,MR} for background on   Dirichlet forms).\medskip

Let $C_c (V)$ be the space of finitely supported functions on $V$. A Dirichlet form on $(V,m)$ is called regular if its domain contains $C_c (V)$ and the form is the closure of its
restriction to the subspace $C_c (V)$. (The standard definition of regularity for Dirichlet forms  would require that $D(Q)\cap C_c (V)$ is dense in both $C_c (V)$ and $D(Q)$.  As discussed in \cite{KL1} this is equivalent to our definition.)

\subsection{From weighted graphs to Dirichlet forms}\label{FromWeightedGraphstoDF}
There is a one-to-one correspondence between weighed graphs and regular Dirichlet forms. This is discussed next.\medskip

To the weighted graph $(V,b,c)$ we can then  associate the form
$Q^{\rm{max}}= Q^{\rm{max}}_{b,c,m}:\ell^2 (V,m)\to [0,\infty]$
with diagonal given by
$$Q^{\rm{max}}(u) = \frac{1}{2}\sum_{x,y\in V} b(x,y) (u(x) - u(y))^2 + \sum_{x\in V} c(x) u(x)^2.$$
Here, the value $\infty$ is allowed. Let $Q^{\rm{comp}}=Q^{\rm{comp}}_{b,c}$ be the restriction of $Q^{\rm{max}}$ to $C_c (V)$. It is not hard to see that $Q^{\rm{max}}$ is closed. Hence $Q^{\rm{comp}}$ is closable on $\ell^2(V,m)$ and the closure will be denoted by $Q=Q_{b,c,m}$ and
its domain by $D(Q)$.\medskip

As discussed in \cite{KL1}  (see  \cite{Fuk} as well) the following holds.

\begin{theorem}\label{characterizationDF} The regular Dirichlet forms on $(V,m)$ are exactly given by
  the forms $Q_{b,c,m}$ with weighted graphs $(b,c)$ over $V$.
\end{theorem}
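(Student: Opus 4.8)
The plan is to establish the claimed bijection in two directions. For the forward direction, one shows that every form $Q_{b,c,m}$ arising from a weighted graph $(b,c)$ over $V$ is a regular Dirichlet form. Regularity is essentially built into the construction: $Q_{b,c,m}$ is by definition the closure of $Q^{\rm comp}_{b,c}$, the restriction of $Q^{\rm max}_{b,c,m}$ to $C_c(V)$, so its domain contains $C_c(V)$ and it is the closure of that restriction. Thus the only thing to check is that $Q_{b,c,m}$ is a Dirichlet form, i.e.\ that it is compatible with normal contractions. For $u \in C_c(V)$ and a normal contraction $C$, the pointwise inequality $(Cu(x) - Cu(y))^2 \le (u(x)-u(y))^2$ together with $C(0)=0$ giving $(Cu(x))^2 \le u(x)^2$ immediately yields $Q^{\rm max}(Cu) \le Q^{\rm max}(u)$; one then needs to push this inequality through the closure. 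The standard argument is that if $u_n \to u$ in $\ell^2(V,m)$ with $Q(u_n)$ bounded, then (passing to a subsequence and using, e.g., the Banach--Saks property or Mazur's lemma on Cesàro means) one can arrange $Cu_n \to Cu$ with $\liminf Q^{\rm comp}(Cu_n) \le \liminf Q^{\rm comp}(u_n)$, and lower semicontinuity of the closed form $Q$ finishes it.

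For the reverse direction, one must show that every regular Dirichlet form $Q$ on $(V,m)$ is of the form $Q_{b,c,m}$ for a suitable weighted graph $(b,c)$ over $V$. Here the key step is to read off the coefficients $b$ and $c$ from the values of $Q$ on finitely supported functions. Concretely, for distinct $x,y \in V$ set $b(x,y) := -Q(\mathbf{1}_x, \mathbf{1}_y)$ (where $\mathbf{1}_x \in C_c(V)$ is the indicator of $x$), and define $c(x)$ by $c(x) := Q(\mathbf{1}_x) - \sum_{y \neq x} b(x,y)$; then one checks $b(x,x)=0$, symmetry (b1) from $Q(\mathbf{1}_x,\mathbf{1}_y) = Q(\mathbf{1}_y,\mathbf{1}_x)$, and the summability (b2) together with $b, c \ge 0$ from the Dirichlet (Markov) property applied to well-chosen normal contractions — this is the classical Beurling--Deny type computation. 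Once $b(x,y) \ge 0$ and $c(x) \ge 0$ are established, bilinearity shows $Q(u,v)$ agrees with $Q^{\rm comp}_{b,c}(u,v)$ for all $u,v \in C_c(V)$, and since $Q$ is regular it is the closure of its restriction to $C_c(V)$, hence $Q = Q_{b,c,m}$. Injectivity of the map $(b,c) \mapsto Q_{b,c,m}$ is immediate because the above formulas recover $(b,c)$ from $Q$.

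The main obstacle is the sign and summability analysis in the reverse direction, i.e.\ showing that the Beurling--Deny coefficients extracted from an abstract regular Dirichlet form on the countable set $V$ really satisfy $b \ge 0$, $c \ge 0$, and (b2). For $b(x,y) \ge 0$ one applies the contraction property to the normal contraction that is the identity near $0$ but truncates appropriately, tested on $\mathbf{1}_x + t\,\mathbf{1}_y$; for $c(x) \ge 0$ one uses that $Q \ge 0$ together with contractions that decrease $\mathbf{1}_x$; and (b2) follows because $Q(\mathbf{1}_x) < \infty$ (as $\mathbf{1}_x \in C_c(V) \subseteq D(Q)$) bounds $\sum_{y \ne x} b(x,y)$ from above. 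Since this is exactly the content established in \cite{KL1} (and is classical in the theory of Dirichlet forms on discrete spaces, cf.\ \cite{Fuk}), in the paper at hand it suffices to cite those references; we include the sketch above to indicate the mechanism.
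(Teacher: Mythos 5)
Your sketch is correct and is essentially the standard Beurling--Deny-type argument; note, however, that the paper does not actually prove this theorem --- it is stated with a pointer to \cite{KL1} (see also \cite{Fuk}) --- so there is no in-paper proof to compare against, and your outline is consistent with what those references do. Two small remarks. In the forward direction the Banach--Saks/Mazur detour is unnecessary: for $u\in D(Q)$ pick $u_n\in C_c(V)$ converging to $u$ in form norm; then $Cu_n\in C_c(V)$ (since $C(0)=0$), $Cu_n\to Cu$ in $\ell^2(V,m)$ because $C$ is a contraction, and $Q^{\rm{comp}}(Cu_n)\le Q^{\rm{comp}}(u_n)$ term by term, so lower semicontinuity of the closure already gives $Q(Cu)\le\liminf_n Q(u_n)=Q(u)$. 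In the reverse direction, the cleanest way to get $b\ge 0$, (b2) and $c\ge 0$ in one stroke is: the contraction $s\mapsto |s|$ applied to $\mathbf{1}_x-t\mathbf{1}_y$ gives $Q(\mathbf{1}_x,\mathbf{1}_y)\le 0$, i.e.\ $b(x,y)\ge 0$; and the contraction $s\mapsto\min(s,1)$ applied to $\mathbf{1}_K+t\mathbf{1}_x$ for finite $K\ni x$ gives $Q(\mathbf{1}_K,\mathbf{1}_x)\ge 0$, i.e.\ $\sum_{y\in K\setminus\{x\}}b(x,y)\le Q(\mathbf{1}_x)<\infty$ uniformly in $K$, which yields (b2) and, in the limit over $K$, $c(x)\ge 0$. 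With those coefficients, bilinearity identifies $Q$ with $Q^{\rm{comp}}_{b,c}$ on $C_c(V)$ and regularity finishes the argument, exactly as you say.
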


\noindent\textbf{Remark. } One may wonder whether the regularity assumption is necessary in the above theorem. It turns out that not every Dirichlet form $Q^{\rm{max}}_{b,c,m}$ is regular. A counterexample is provided in \cite{KL1}.\medskip

For a given a weighted graph $(V,b,c)$ the different  choices of measure $m$ will produce different Dirichlet forms. Two particular choices have attracted attention.  One is the choice of $m\equiv 1$. Obviously, this choice does not depend on $b$ and $c$. Another possibility is to use  $n=m= m_{b,c}$  given by
$$n(x) := \sum_{y\in V} b(x,y) + c(x).$$
The advantage of this measure is that it  produces a  bounded form (see below for details).

\subsection{Graph Laplacians}
Let $m$ be a measure on $V$ of full support,  $(b,c)$ a weighted graph over $V$ and $Q_{b,c,m}$ the  associated regular Dirichlet form. Then, there exists a unique selfadjoint operator $L = L_{b,c,m}$ on
$\ell^2 (V,m)$  such
that
$$ D(Q) :=\{ u\in \ell^2 (V,m) : Q(u)<\infty\} = \mbox{Domain of definition of $L^{1/2}$}$$
and
$$ Q(u) = \langle L^{1/2} u , L^{1/2} u\rangle$$
for $u\in D(Q)$ (see e.g.  Theorem 1.2.1 in \cite{Dav1}). As $Q$ is
nonnegative so is $L$.

\begin{definition} Let $V$ be a countable set and $m$ a measure on $V$ with full support.
A graph Laplacian on $V$ is an operator $L$ associated to a form $Q_{b,c,m}$.
\end{definition}

Our next aim is to describe the operator $L$ more explicitly: Define the formal Laplacian  $\widetilde{L} = \widetilde{L}_{b,c,m}$ on the vector space
\begin{equation}\label{ftilde}
\widetilde{F}:=\{ u : V\longrightarrow \RR : \sum_{y\in V} |b(x,y) u(y)|<\infty\;\mbox{for all
  $x\in V$ } \}
  \end{equation}
by
$$\widetilde{L} u (x) :=\frac{1}{m(x)} \sum_{y\in V} b(x,y) (u(x) - u(y)) +
\frac{c(x)}{m(x)}  u(x),$$
where, for each $x\in V$,  the sum exists  by assumption on $u$. The operator $\widetilde{L}$ describes the action of $L$ in the following sense.

\begin{prop}\label{generator} Let $(V,b,c)$ be a weighted graph and $m$ a  measure on $V$ of full support. Then, the operator  $L$ is a  restriction of $\widetilde{L}$ i.e.,   $$   D(L)\subseteq \{ u\in \ell^2 (V,m) : \widetilde{L} u \in \ell^2(V,m)\}\;\:\mbox{and}\;\: Lu = \widetilde{L} u $$
for all $u\in D(L)$.
\end{prop}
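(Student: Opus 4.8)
The plan is to exploit the variational characterization of $L$ via the form $Q$, namely that $u \in D(L)$ with $Lu = f$ if and only if $u \in D(Q)$ and $Q(u,v) = \langle f, v\rangle$ for all $v$ in a core of $Q$; since $Q$ is regular, $C_c(V)$ is such a core. So fix $u \in D(L)$, set $f := Lu \in \ell^2(V,m)$, and aim to show first that $u \in \widetilde F$ (so that $\widetilde L u$ makes pointwise sense) and then that $\widetilde L u = f$ pointwise.

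First I would test the identity $Q(u,v) = \langle f, v \rangle$ against $v = \delta_x$ (the indicator of a single vertex $x$), which lies in $C_c(V) \subseteq D(Q)$. The right-hand side is simply $m(x) f(x)$. For the left-hand side I need the polarized form of $Q^{\mathrm{comp}}$, i.e.
$$ Q(u,\delta_x) = \frac{1}{2}\sum_{y,z\in V} b(y,z)\bigl(u(y)-u(z)\bigr)\bigl(\delta_x(y)-\delta_x(z)\bigr) + \sum_{y\in V} c(y)\, u(y)\,\delta_x(y). $$
The killing term collapses to $c(x)u(x)$. In the double sum, the factor $\delta_x(y)-\delta_x(z)$ is nonzero only when exactly one of $y,z$ equals $x$; using the symmetry $b(y,z)=b(z,y)$ the two resulting families of terms coincide, and the sum reduces to $\sum_{y\in V} b(x,y)\bigl(u(x)-u(y)\bigr)$. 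Thus, once this manipulation is justified, $Q(u,\delta_x) = \sum_{y} b(x,y)(u(x)-u(y)) + c(x)u(x)$, and dividing by $m(x)$ gives exactly $\widetilde L u(x) = f(x)$, provided the sum converges absolutely.

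The main obstacle — and the step that needs care rather than being routine — is establishing absolute convergence of $\sum_{y} b(x,y) u(y)$, i.e. that $u \in \widetilde F$, so that the rearrangement of the double sum above is legitimate and $\widetilde L u(x)$ is well defined. Here I would argue as follows: since $u \in D(Q)$ we have $Q(u) < \infty$, so in particular $\sum_{y} b(x,y)(u(x)-u(y))^2 < \infty$ for each fixed $x$; by Cauchy–Schwarz,
$$ \sum_{y\in V} b(x,y)\,|u(x)-u(y)| \;\le\; \Bigl(\sum_{y\in V} b(x,y)\Bigr)^{1/2}\Bigl(\sum_{y\in V} b(x,y)(u(x)-u(y))^2\Bigr)^{1/2} \;<\;\infty, $$
where finiteness of $\sum_y b(x,y)$ is assumption (b2). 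Combined with $|u(x)|\sum_y b(x,y) < \infty$, again by (b2), this yields $\sum_y b(x,y)|u(y)| \le \sum_y b(x,y)|u(x)-u(y)| + |u(x)|\sum_y b(x,y) < \infty$, hence $u \in \widetilde F$. With absolute convergence in hand, Fubini/Tonelli justifies splitting and re-indexing the double sum defining $Q(u,\delta_x)$, and the computation above goes through. Finally, $f = Lu \in \ell^2(V,m)$ by hypothesis, so the pointwise identity $\widetilde L u = Lu$ together with $u \in D(L) \subseteq \ell^2(V,m)$ gives the asserted inclusion $D(L) \subseteq \{u \in \ell^2(V,m) : \widetilde L u \in \ell^2(V,m)\}$, completing the proof.
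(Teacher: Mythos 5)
Your proof is correct; the paper itself states this proposition without proof (deferring to \cite{KL1}), and the argument given there is essentially the one you propose: test $Q(u,\cdot)=\langle Lu,\cdot\rangle$ against $\delta_x$, and use Cauchy--Schwarz together with (b2) to show $D(Q)\subseteq\widetilde{F}$ and to justify the rearrangement. The only step worth making explicit is that $\sum_y b(x,y)(u(x)-u(y))^2<\infty$ for $u\in D(Q)$ relies on the fact that the closure $Q$ of $Q^{\rm{comp}}$ is a restriction of the closed form $Q^{\rm{max}}$, so that $Q(u)=Q^{\rm{max}}(u)$ on $D(Q)$.
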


In order to obtain further information we need a stronger condition.  We define condition $(A)$ as follows:
\begin{itemize}
\item[$(A)$] For any sequence $(x_n)$ of vertices in  $V$ such that $b(x_n,x_{n+1})>0$ for all $n\in \NN$, the equality $ \sum_{n\in \NN} m (x_n) =\infty$ holds.
\end{itemize}

Let us emphasize that  in general  $(A)$ is a condition on $(V,m)$ and $b$
together. However, if
$$\inf_{x\in V}  m_x >0$$
holds, then obviously  $(A)$ is satisfied for all graphs $(b,c)$ over $V$.   This applies in particular to the case that $m\equiv 1$.\medskip

Given $(A)$ we can say more about the generators \cite{KL1}.

\begin{theorem}\label{generatorthm} Let $(V,b,c)$ be a weighted graph and $m$ a
 measure on $V$ of  full support such that $(A)$ holds. Then, the operator  $L$ is the
  restriction of $\widetilde{L}$ to
  $$   D(L)=\{ u\in \ell^2 (V,m) : \widetilde{L} u \in \ell^2
  (V,m)\}. $$
\end{theorem}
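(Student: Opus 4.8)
The plan is to show the inclusion $\{u\in\ell^2(V,m):\widetilde L u\in\ell^2(V,m)\}\subseteq D(L)$, since the reverse inclusion together with $Lu=\widetilde L u$ on $D(L)$ is already furnished by Proposition~\ref{generator}. Call the left-hand set $D(\widetilde L)$. The standard strategy for such ``the operator is the maximal one'' statements is a deficiency/adjoint argument: one knows that $L$, being self-adjoint, satisfies $L=L^*$; and one checks directly that $\widetilde L$ restricted to $D(\widetilde L)$ is a symmetric extension of $L$, hence it must coincide with $L^*=L$. Concretely, I would take $u\in D(\widetilde L)$ and $v\in D(L)$ (or better $v\in C_c(V)$, which is a form core) and verify the Green-type identity $\langle \widetilde L u,v\rangle = \langle u,\widetilde L v\rangle$, which amounts to an interchange of summation in the double sum $\sum_{x,y}b(x,y)(u(x)-u(y))v(x)$; for $v\in C_c(V)$ this is a finite-in-$x$ sum and the interchange is justified by (b2) and the defining summability condition~\eqref{ftilde}. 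This shows $u\in D(L^*)=D(L)$ provided one first knows $\widetilde L u\in\ell^2(V,m)$, which is exactly the hypothesis defining $D(\widetilde L)$.

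However, the identity $\langle \widetilde L u,v\rangle=\langle u, \widetilde L v\rangle$ with $v\in C_c(V)$ only shows that $u$ lies in the domain of the adjoint of $L\!\restriction\!C_c(V)$, i.e. of $\widetilde L$ as an operator with domain-core $C_c(V)$; to conclude $u\in D(L)$ one needs that $L$ is the self-adjoint operator associated with $Q$, which is the closure of $Q^{\mathrm{comp}}$, and to relate $D(L^*|_{C_c})$ back to $D(L)$. This is where condition~$(A)$ must enter. The role of $(A)$ is to guarantee that the form $Q$ (equivalently the operator $L$) has no ``escape to infinity along a ray'': a function $u\in D(\widetilde L)$ that is a priori only in $\ell^2$ must actually be approximable in the form norm by $C_c(V)$ functions. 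I would use $(A)$ via a cut-off argument: choose exhausting finite sets, multiply $u$ by suitable cut-off functions $\eta_n$, and show $Q(\eta_n u - u)\to 0$, where the divergence of $\sum m(x_n)$ along every infinite path is precisely what makes the ``boundary'' contributions from $\widetilde L$ die in the limit (this is the discrete analogue of the Khasminskii/completeness trick). An alternative, cleaner route: invoke that under $(A)$ the form $Q$ equals $Q^{\mathrm{max}}$ on all of $\ell^2(V,m)$ (i.e. $Q^{\mathrm{comp}}$ is essentially the maximal form), a fact proved in \cite{KL1}; granting that, $D(L)$ is automatically the full set $D(\widetilde L)$ by the abstract correspondence between forms and operators.

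The main obstacle is this last point: proving that $(A)$ forces $D(Q)$ to be as large as possible, equivalently that the cut-off sequence $\eta_n u$ converges to $u$ in the form norm for every $u\in D(\widetilde L)$. The delicate estimate is controlling $\frac12\sum_{x,y}b(x,y)\big((\eta_n u)(x)-(\eta_n u)(y)\big)^2$; splitting off $\eta_n(x)^2(u(x)-u(y))^2$ (which is harmless) leaves a cross term and a term with $(\eta_n(x)-\eta_n(y))^2 u(y)^2$, and one must design the $\eta_n$ (e.g. via graph distance weighted by $\sqrt{m}$, using $(A)$ to ensure the Lipschitz-type gradient of $\eta_n$ can be made summable) so that these go to zero, using $\widetilde L u\in\ell^2(V,m)$ through a summation-by-parts identity $Q(\eta_n u, w)=\langle \widetilde L u, \eta_n w\rangle + (\text{error})$. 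Everything else — the interchange of sums in Green's formula, symmetry, the reduction to $C_c(V)$ — is routine given (b1), (b2) and~\eqref{ftilde}.
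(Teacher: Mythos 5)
Your reduction to the inclusion $\{u\in\ell^2(V,m):\widetilde L u\in\ell^2(V,m)\}\subseteq D(L)$ is correct, but neither of your two routes closes it, and both run into obstacles that are more than technical. The adjoint route founders at the start: under the hypotheses of this theorem one does \emph{not} know that $C_c(V)\subseteq D(L)$ --- the paper stresses that $(A)$ does not imply $\widetilde L C_c(V)\subseteq\ell^2(V,m)$, and since $L$ is a restriction of $\widetilde L$ by Proposition~\ref{generator}, $\delta_x\in D(L)$ would force $\widetilde L\delta_x\in\ell^2(V,m)$. So there is no operator ``$L$ restricted to $C_c(V)$'' whose adjoint you could take, and extending the Green identity to all $v\in D(L)$ requires exactly the interchange of summation that can fail without $(A)$ (this is where the Jacobi-matrix counterexamples live). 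Your second route is asserted rather than proved: the key cut-off estimate is left open, and even granting $Q=Q^{\mathrm{max}}$ the domain identity is not ``automatic'' from the form--operator correspondence --- one would still have to show that every $u$ with $\widetilde L u\in\ell^2(V,m)$ satisfies $Q^{\mathrm{max}}(u)<\infty$ and $Q(u,v)=\langle\widetilde L u,v\rangle$ for all $v\in D(Q)$, which is not addressed.

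The argument the paper sets up (in Section~2, carried out in \cite{KL1}) is much shorter and isolates exactly where $(A)$ enters. Let $u\in\ell^2(V,m)$ with $\widetilde L u\in\ell^2(V,m)$, fix $\alpha>0$, and put $f:=(\widetilde L+\alpha)u\in\ell^2(V,m)$ and $v:=(L+\alpha)^{-1}f\in D(L)$. By Proposition~\ref{generator}, $(\widetilde L+\alpha)v=(L+\alpha)v=f$, so $w:=u-v\in\ell^2(V,m)$ solves $(\widetilde L+\alpha)w=0$. The proposition on uniqueness of solutions in $\ell^p$ (a consequence of the minimum principle, and the only place where $(A)$ is used) gives $w\equiv0$, hence $u=v\in D(L)$. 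Your cut-off/Khasminskii heuristic is the right intuition for \emph{why} $(A)$ prevents escape to infinity, but the proof channels this through the minimum principle and the resolvent rather than through form-norm approximation by $C_c(V)$; as it stands, your proposal does not constitute a proof.
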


\noindent\textbf{Remark. } The theory of Jacobi matrices already provides examples
showing that without $(A)$ the statement becomes false \cite{KL1}.\medskip

The condition $(A)$ does not imply that $\widetilde{L} f$ belongs to
$\ell^2 (V,m)$ for all $f\in C_c (V)$.
However,  if this is the case, then $(A)$ does imply essential
selfadjointness. In this case, $Q$ is the ``maximal'' form associated to the
graph $(b,c)$. More precisely, the following holds \cite{KL1}.

\begin{theorem}\label{essential}
Let $V$ be a set,  $m$ a measure on $V$ with full support,  $(b,c)$ a graph
over $V$ and $Q$ the associated regular  Dirichlet form. Assume
$\widetilde{L} C_c (V) \subseteq  \ell^2 (V,m)$. Then,   $D(L)$ contains $C_c (V)$. If furthermore $(A)$ holds, then the restriction of $L$ to $C_c (V)$  is essentially selfadjoint and the domain of $L$  is given by
$$D (L) = \{u\in \ell^2 (V,m) : \widetilde{L} u\in \ell^2 (V,m)\}$$
and the associated form  $Q$ satisfies  $Q = Q^{\rm{max}}$ i.e., $$ Q(u) = \frac{1}{2} \sum_{x,y\in V} b(x,y) (u(x) - u(y))^2 + \sum_{x\in V} c(x)
u(x)^2$$
for all $u\in \ell^2 (V,m)$.
\end{theorem}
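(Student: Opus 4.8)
The plan is to break the statement into three assertions and prove them in sequence, using Theorem~\ref{generatorthm} as the workhorse once condition $(A)$ enters. First, assuming only $\widetilde{L}C_c(V)\subseteq\ell^2(V,m)$, I would show $C_c(V)\subseteq D(L)$. The natural route is to compute, for $u\in C_c(V)$ and $v\in D(Q)$, the ``Green's formula'' identity $Q(u,v)=\langle\widetilde{L}u,v\rangle$. For $u$ of compact support this is a finite sum manipulation: expand $Q^{\rm comp}(u,v)=\frac12\sum_{x,y}b(x,y)(u(x)-u(y))(v(x)-v(y))+\sum_x c(x)u(x)v(x)$, use symmetry of $b$ and (b2) to split and reindex, arriving at $\sum_x v(x)\big(\sum_y b(x,y)(u(x)-u(y))+c(x)u(x)\big)=\sum_x m(x)v(x)\widetilde{L}u(x)=\langle\widetilde{L}u,v\rangle$. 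One must justify the rearrangement: since $u\in C_c(V)$, only finitely many $x$ contribute nonzero $u(x)$, and for each such $x$ the inner $y$-sum is absolutely convergent by (b2); the terms with $u(x)=u(y)=0$ vanish. Regularity of $Q$ lets us pass from $v\in C_c(V)$ to all $v\in D(Q)$ by density, provided the right-hand side is continuous in $v$ — which it is, being $\langle\widetilde{L}u,\cdot\rangle$ with $\widetilde{L}u\in\ell^2(V,m)$ by hypothesis. Hence $u\in D(L)$ with $Lu=\widetilde{L}u$.

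Second, assuming in addition $(A)$, I would prove essential selfadjointness of $L|_{C_c(V)}$ and identify $D(L)$. Theorem~\ref{generatorthm} already gives $D(L)=\{u\in\ell^2(V,m):\widetilde{L}u\in\ell^2(V,m)\}=:D_{\max}$, so that part is immediate. For essential selfadjointness I would show that the closure of $L|_{C_c(V)}$ equals $L$. Since $L$ is selfadjoint and $L|_{C_c(V)}\subseteq L$, it suffices to show $L|_{C_c(V)}$ is essentially selfadjoint, equivalently that $\ker(L^*\pm i)=\{0\}$ where $L^*$ is the adjoint of the symmetric operator $L|_{C_c(V)}$. The key observation is that, by the Green's formula just established, $L^*$ is itself a restriction of $\widetilde L$: if $w\in D(L^*)$ then for all $u\in C_c(V)$ we have $\langle L^*w,u\rangle=\langle w,\widetilde Lu\rangle=\langle\widetilde Lw,u\rangle$ (the last step again a finite-sum reindexing using that $u$ has compact support, valid since $w\in\ell^2(V,m)\subseteq\widetilde F$), whence $L^*w=\widetilde Lw\in\ell^2(V,m)$, i.e.\ $w\in D_{\max}=D(L)$. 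Therefore $L^*\subseteq L$, and since $L\subseteq L^*$ trivially, $L^*=L$, which is exactly essential selfadjointness.

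Third, I would establish $Q=Q^{\rm max}$ on all of $\ell^2(V,m)$. The inclusion $D(Q)\subseteq D(Q^{\rm max})$ with $Q(u)=Q^{\rm max}(u)$ there is clear since $Q$ is the closure of $Q^{\rm comp}$ and $Q^{\rm max}$ is a closed extension of $Q^{\rm comp}$. For the reverse, take $u\in\ell^2(V,m)$ with $Q^{\rm max}(u)<\infty$; I must show $u\in D(Q)$. Using the quadratic-form/operator correspondence, $u\in D(Q)=D(L^{1/2})$ iff $u$ lies in the form domain, and one standard criterion is: $u\in D(L^{1/2})$ iff $\sup\{|\langle u,Lv\rangle|:v\in C_c(V),\ \langle Lv,v\rangle\le1\}<\infty$ — but by essential selfadjointness $C_c(V)$ is a form core, so it is cleaner to argue directly. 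Given $Q^{\rm max}(u)<\infty$, approximate $u$ by $u_n:=u\cdot\mathbf{1}_{B_n}\in C_c(V)$ for an exhaustion $B_n\uparrow V$; then $u_n\to u$ in $\ell^2(V,m)$, and a dominated-convergence argument on the sums defining $Q^{\rm max}$ (each term $b(x,y)(u(x)-u(y))^2$ and $c(x)u(x)^2$ is nonnegative and summable, and the truncations converge pointwise while the mixed cross-terms are controlled by Cauchy–Schwarz against the finite total $Q^{\rm max}(u)$) shows $Q^{\rm max}(u_n-u_m)\to0$, so $(u_n)$ is $Q$-Cauchy; since $Q=\overline{Q^{\rm comp}}$ is closed, $u\in D(Q)$ and $Q(u)=\lim Q^{\rm comp}(u_n)=Q^{\rm max}(u)$.

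The main obstacle I anticipate is the last step — the $Q$-Cauchy estimate for the truncations. The difficulty is that $Q^{\rm max}(u-u_n)$ involves cross terms $b(x,y)(u(x)-u(y))(\mathbf{1}_{B_n}(y)u(y)-\mathbf{1}_{B_n}(x)u(x))$ across the boundary of $B_n$, and one needs the tail of the (convergent) double series $\sum b(x,y)(u(x)-u(y))^2$ plus a Cauchy–Schwarz splitting to kill these; choosing the exhaustion along $b$-connected pieces and invoking the hypothesis $\widetilde LC_c(V)\subseteq\ell^2$ (which feeds into $\langle\widetilde Lu_n,u_n\rangle=Q^{\rm comp}(u_n)$) keeps everything finite. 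The first two steps are essentially bookkeeping with Green's formula and the abstract adjoint calculation, so the real content — beyond Theorem~\ref{generatorthm}, which is quoted — sits in verifying that the maximal form is reached, i.e.\ that no form domain is lost in passing to the closure.
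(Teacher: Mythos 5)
The paper itself only cites \cite{KL1} for this theorem, so I am judging your argument on its own terms. Your first two steps are sound: the Green's formula $Q(u,v)=\langle\widetilde{L}u,v\rangle$ for $u\in C_c(V)$, extended to $v\in D(Q)$ by regularity, does give $C_c(V)\subseteq D(L)$; and the adjoint computation showing $(L|_{C_c(V)})^*\subseteq\widetilde{L}$, combined with Theorem~\ref{generatorthm}, does yield essential selfadjointness. One point you assert without justification is the inclusion $\ell^2(V,m)\subseteq\widetilde{F}$: this is \emph{not} automatic (it fails if $\sum_x b(x,y)^2/m(x)=\infty$ for some $y$), but it is exactly equivalent to the standing hypothesis $\widetilde{L}C_c(V)\subseteq\ell^2(V,m)$ applied to the functions $\delta_y$ together with Cauchy--Schwarz; you should say so, since without that hypothesis the reindexing $\langle w,\widetilde{L}u\rangle=\langle\widetilde{L}w,u\rangle$ breaks down.

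The genuine gap is in your third step. The claim that the sharp truncations $u_n=u\cdot\mathbf{1}_{B_n}$ are $Q$-Cauchy is precisely the delicate point, and your ``dominated convergence plus Cauchy--Schwarz'' sketch does not establish it. Concretely, $Q^{\rm max}(u-u_n)$ contains the cross-boundary contribution $\sum_{x\in B_n,\,y\notin B_n}b(x,y)\,u(y)^2$, which is of the order of $\|u\,\mathbf{1}_{V\setminus B_n}\|_n^2$; neither $u\in\ell^2(V,m)$ nor $Q^{\rm max}(u)<\infty$ controls the $\ell^2(V,n)$-norm of $u$, so these terms need not tend to zero. This is exactly the mechanism behind the remark after Theorem~\ref{characterizationDF} that $Q^{\rm max}$ need not be regular in general, and your sketch never shows where the hypotheses $(A)$ and $\widetilde{L}C_c(V)\subseteq\ell^2(V,m)$ enter to rescue the estimate. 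The repair is available from your own second step and avoids truncation entirely: since $Q^{\rm max}$ is closed, its associated selfadjoint operator $L^{\rm max}$ satisfies $Q^{\rm max}(u,v)=\langle\widetilde{L}u,v\rangle$ for $u\in C_c(V)$, $v\in D(Q^{\rm max})$ (the same Green's formula, using $\ell^2(V,m)\subseteq\widetilde{F}$), so $L^{\rm max}$ is a selfadjoint extension of $L|_{C_c(V)}$; essential selfadjointness forces $L^{\rm max}=L$, and since a closed semibounded form is determined by its operator, $Q^{\rm max}=Q$.
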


\noindent\textbf{Remark. }  Essential selfadjointness may fail if $(A)$ does not hold as can be seen by examples \cite{KL1}.\medskip

If $\inf_{x\in V} m_x >0$ then both  $(A)$ and $\widetilde{L} C_c (V)
\subseteq  \ell^2 (V,m)$ hold  for any graph $(b,c)$ over $V$.  We therefore obtain the following corollary.

\begin{coro} Let $V$ be a set and  $m$ a measure on $V$ with $\inf_{x\in V} m_x >0$.
Then,   $D(L)$ contains $C_c (V)$, the restriction of $L$ to $C_c (V)$  is essentially selfadjoint and the domain of $L$  is given by
$$D (L) = \{u\in \ell^2 (V,m) : \widetilde{L} u\in \ell^2 (V,m)\}$$
and the associated form  $Q$ satisfies  $Q = Q^{\rm{max}}$.
\end{coro}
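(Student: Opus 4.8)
The plan is to obtain the corollary as an immediate consequence of Theorem~\ref{essential}. That theorem has two hypotheses, namely $\widetilde{L} C_c(V)\subseteq\ell^2(V,m)$ and condition $(A)$, and its conclusion is precisely the list of assertions in the corollary. Thus the whole task reduces to checking that the assumption $\inf_{x\in V}m_x>0$ forces both hypotheses. Condition $(A)$ is immediate: writing $\varepsilon:=\inf_{x\in V}m_x>0$, for any sequence $(x_n)$ of vertices with $b(x_n,x_{n+1})>0$ we get $\sum_{n}m(x_n)\ge\sum_n\varepsilon=\infty$, as already observed after the definition of $(A)$.

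The only point requiring an argument is the inclusion $\widetilde{L} C_c(V)\subseteq\ell^2(V,m)$. Since $\widetilde{L}$ is linear on $\widetilde{F}$ and $C_c(V)$ is the span of the indicator functions $\mathbf{1}_z$, $z\in V$, it suffices to show $\widetilde{L}\mathbf{1}_z\in\ell^2(V,m)$ for each fixed $z$. First note $\mathbf{1}_z\in\widetilde{F}$, since $\sum_{y}|b(x,y)\mathbf{1}_z(y)|=b(x,z)<\infty$, so $\widetilde{L}\mathbf{1}_z$ is well defined. Evaluating the defining formula yields
$$\widetilde{L}\mathbf{1}_z(z)=\frac{1}{m(z)}\Big(\sum_{y\in V}b(z,y)+c(z)\Big)\qand \widetilde{L}\mathbf{1}_z(x)=-\frac{b(x,z)}{m(x)}\ \text{ for }x\neq z,$$
whence
$$\aV{\widetilde{L}\mathbf{1}_z}^2=\frac{1}{m(z)}\Big(\sum_{y\in V}b(z,y)+c(z)\Big)^2+\sum_{x\neq z}\frac{b(x,z)^2}{m(x)}.$$
The first term is finite by (b2). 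For the second, I would bound $1/m(x)\le 1/\varepsilon$ and then use $\sum_{x}b(x,z)^2\le\big(\sum_{x}b(x,z)\big)^2$ (valid since the summands are nonnegative) together with (b2) once more; this shows the second term is finite. Hence $\widetilde{L}\mathbf{1}_z\in\ell^2(V,m)$, and therefore $\widetilde{L} C_c(V)\subseteq\ell^2(V,m)$.

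With both hypotheses verified, Theorem~\ref{essential} applies and delivers all the claims of the corollary: $C_c(V)\subseteq D(L)$, the restriction of $L$ to $C_c(V)$ is essentially selfadjoint, $D(L)=\{u\in\ell^2(V,m):\widetilde{L}u\in\ell^2(V,m)\}$, and $Q=Q^{\rm max}$. As for difficulty, there is no genuine obstacle here; the step I would flag as the crux is the inclusion $\widetilde{L} C_c(V)\subseteq\ell^2(V,m)$, and within it the one estimate that matters is the control of $\sum_{x}b(x,z)^2/m(x)$, obtained by pulling out the uniform lower bound on $m$ and invoking (b2).
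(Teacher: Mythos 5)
Your proposal is correct and follows exactly the route the paper takes: the paper derives the corollary by observing that $\inf_{x\in V} m_x>0$ forces both condition $(A)$ and $\widetilde{L}\,C_c(V)\subseteq\ell^2(V,m)$, and then invoking Theorem~\ref{essential}. You merely spell out the (routine but correct) verification of $\widetilde{L}\mathbf{1}_z\in\ell^2(V,m)$ that the paper leaves implicit, and your estimate $\sum_x b(x,z)^2/m(x)\le\varepsilon^{-1}\bigl(\sum_x b(x,z)\bigr)^2<\infty$ via (b1) and (b2) is valid.
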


\noindent\textbf{Remark. } The corollary includes the case that $m\equiv 1$ and  we recover the corresponding results of  \cite{DK,Woj1,Web} on  essential selfadjointness. (In fact, the  cited works also have additional restrictions on $b$ but this is not relevant here.)

\section{Minimum principle and consequences} \label{Minimum}
An important tool in the proofs of the results of the previous section is a minimum principle. This minimum principle shows in particular the relevance of $(A)$ in our considerations. This is discussed in this section.
\medskip

The following result is a variant and in fact a slight generalization  of the minimum principle from \cite{KL1}.

\begin{theorem} (Minimum principle)  Let  $(V,b,c)$ be a  weighted graph and $m$ a measure on $V$ of full support.  Let $U\subseteq V$
  be connected. Assume that the function  $u$ on  $V$ satisfies
\begin{itemize}
 \item $(\widetilde{L} + \alpha)  u \geq 0$ on  $U$ for some $\alpha>0$,
\item $u \geq 0$ on  $V\setminus U$.
\end{itemize}
Then,   the value of $u$ is nonnegative in any local minimum of $u$.
\end{theorem}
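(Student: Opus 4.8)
The plan is to argue purely pointwise at the claimed local minimum; the whole point is that strict positivity of $\alpha$ already makes a naive one-line estimate contradictory, so no propagation along $U$ is needed. Let $x_0$ be a local minimum of $u$, i.e.\ $u(x_0)\leq u(y)$ for every $y$ with $b(x_0,y)>0$. If $x_0\in V\setminus U$ there is nothing to prove, by the second hypothesis. So assume $x_0\in U$ and, seeking a contradiction, suppose $u(x_0)<0$.

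The key step is to unfold the inequality $(\widetilde{L}+\alpha)u(x_0)\geq 0$. Since the hypothesis presupposes that $\widetilde{L}u(x_0)$ is defined, we have $\sum_{y\in V}b(x_0,y)|u(y)|<\infty$; together with condition (b2) this gives absolute convergence of $\sum_{y\in V}b(x_0,y)(u(x_0)-u(y))$. Multiplying the inequality by $m(x_0)>0$ and inserting the defining formula for $\widetilde{L}$, we obtain
$$\sum_{y\in V} b(x_0,y)\bigl(u(x_0)-u(y)\bigr)\;+\;c(x_0)\,u(x_0)\;+\;\alpha\, m(x_0)\,u(x_0)\;\geq\;0 .$$
I now inspect the three contributions separately. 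By the local minimum property each term $b(x_0,y)\bigl(u(x_0)-u(y)\bigr)$ is $\leq 0$, so the first sum is $\leq 0$; since $c(x_0)\geq 0$ and $u(x_0)<0$, the middle term is $\leq 0$; and since $\alpha>0$, $m(x_0)>0$ and $u(x_0)<0$, the last term is strictly negative. Hence the left-hand side is strictly negative, contradicting the displayed inequality. Therefore $u(x_0)\geq 0$, which is the assertion.

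I do not expect a genuine obstacle. The one spot deserving a line of care is the handling of the (possibly infinite) sum, which is controlled by condition (b2) together with the assumed existence of $\widetilde{L}u(x_0)$. It is worth emphasizing in the final write-up that the contradiction is produced solely by the strict term $\alpha\,m(x_0)\,u(x_0)<0$; this is precisely why the hypothesis requires $\alpha>0$ rather than merely $\alpha\geq 0$ (the harmonic case), and it is likewise the reason connectedness of $U$ plays no role here --- in contrast to the stronger global statement ``$u\geq 0$ on all of $U$'', whose proof must propagate information from a minimum along paths of $U$ and exclude the infimum escaping to infinity, the point at which condition $(A)$ would be invoked.
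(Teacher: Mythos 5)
Your proof is correct and is essentially the paper's own argument: unfold $(\widetilde{L}+\alpha)u(x_0)\geq 0$ at the minimum, observe every term is $\leq 0$ with the $\alpha\,m(x_0)\,u(x_0)$ term strictly negative, and derive a contradiction. The only (cosmetic) difference is that the paper reads ``local minimum'' relative to $U$ and then uses the hypothesis $u\geq 0$ on $V\setminus U$ together with $u(x_0)<0$ to extend the inequality $u(x_0)-u(y)\leq 0$ to neighbours $y$ outside $U$, whereas you take the neighbour inequality over all of $V$ as the definition of local minimum and use $u\geq 0$ off $U$ only to dismiss minima lying outside $U$; your closing remarks on the roles of $\alpha>0$, connectedness and condition $(A)$ are all accurate.
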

\begin{proof}  Let $u$ attain a local minimum on $U$ in $x_m$. Assume  $u(x_m)< 0$.
Then,  $u(x_m)\leq u(y)$ for all $y\in U$ with $b(x_m,y)>0$.  As $u(y)\geq 0$ for $y\in V\setminus  U$,
  we obtain   $u(x_m) - u(y) \leq
  0$ for all  $y\in V$ with $b(x_m,y)\geq 0$. By the super-solution assumption we find
$$ 0 \leq \sum b(x_m,y) (u(x_m) - u(y)) + c(x_m) u(x_m) + m(x_m)\alpha u(x_m)  \leq 0.$$
As $b$ and $c$ are nonnegative, $m$ is positive and $\alpha>0$, we  obtain the contradiction $0 = u(x_m)$.
\end{proof}

The relevance of $(A)$ comes from the following consequence of the minimum principle first discussed in \cite{KL1}.

\begin{prop}(Uniqueness of solutions on $\ell^p$)
 Assume $(A)$. Let $\alpha>0$, $p\in [1,\infty)$
  and $u\in \ell^p (V,m)$ with $(\widetilde{L} + \alpha) u \geq 0$ be
  given. Then, $u\geq 0$. In particular, any  $u\in \ell^p (V,m)$ with $(\widetilde{L} + \alpha) u =0$ satisfies $u\equiv 0$.
\begin{proof} We first show the first statement:
Assume the contrary. Then, there exists an $x_0\in V$ with
  $u(x_0)<0$.  By the previous minimum principle, $x_0$ is not a local minimum of $u$. Thus, there exists an $x_1$ connected to $x_0$ with $u(x_1)< u(x_0) < 0$.
 Continuing in this way we obtain a sequence $(x_n)$ of connected
 points with $u(x_n) < u(x_0)<0$. Combining this with $(A)$ we obtain a
 contradiction to $u\in \ell^p (V,m)$.

As for the 'In particular' part we note that both  $u$ and $-u$ satisfy the assumptions of the first statement. Thus, $u\equiv 0$.
\end{proof}
\end{prop}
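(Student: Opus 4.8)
The plan is to argue by contradiction, feeding a hypothetical negative value of $u$ into the minimum principle just proved in order to produce an infinite path along which $u$ is strictly decreasing, and then to invoke $(A)$ to contradict $u\in\ell^p(V,m)$. Before that I would dispose of the ``in particular'' clause cheaply: if $(\widetilde{L}+\alpha)u=0$, then both $u$ and $-u$ satisfy $(\widetilde{L}+\alpha)(\pm u)\geq 0$, so once the first statement is known it gives $u\geq 0$ and $-u\geq 0$, hence $u\equiv 0$. Thus everything reduces to showing that $u\in\ell^p(V,m)$ with $(\widetilde{L}+\alpha)u\geq 0$ forces $u\geq 0$.

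So suppose $u(x_0)<0$ for some $x_0\in V$, and let $U$ be the connected component of $x_0$. Since there are no edges joining $U$ to $V\setminus U$, the function $u$ restricted to $U$ still satisfies $(\widetilde{L}+\alpha)u\geq 0$ on $U$, and the minimum principle applies with this $U$, its hypothesis on $V\setminus U$ being vacuous (the vertices we shall meet have no neighbours outside $U$). The minimum principle then tells us that $x_0$, since $u(x_0)<0$, is not a local minimum of $u$; hence there is a neighbour $x_1$ of $x_0$ with $u(x_1)<u(x_0)<0$. As $u(x_1)<0$, the same reasoning applied to $x_1$ yields a neighbour $x_2$ with $u(x_2)<u(x_1)$, and iterating produces a sequence $(x_n)_{n\in\NN}$ with $b(x_n,x_{n+1})>0$ for all $n$ and $u(x_0)>u(x_1)>u(x_2)>\cdots$, all values being $\leq u(x_0)<0$.

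The decisive point is that, the values $u(x_n)$ being strictly decreasing, the vertices $x_n$ are pairwise distinct. Consequently
$$\sum_{n\in\NN}m(x_n)\,|u(x_n)|^p\;\leq\;\sum_{x\in V}m(x)\,|u(x)|^p\;<\;\infty,$$
whereas $|u(x_n)|^p\geq|u(x_0)|^p>0$ for every $n$ shows that the left-hand side is bounded below by $|u(x_0)|^p\sum_{n\in\NN} m(x_n)$, which equals $\infty$ by $(A)$. This contradiction gives $u\geq 0$ and finishes the proof.

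I expect the steps that need attention to be bookkeeping rather than conceptual: one must pass to the connected component of $x_0$ so that the minimum principle (stated only for connected $U$) is legitimately applicable, and one must observe that the strict monotonicity of $(u(x_n))_n$ makes the vertices distinct. This distinctness is exactly what allows $\sum_n m(x_n)|u(x_n)|^p$ to be dominated by the finite number $\sum_{x\in V}m(x)|u(x)|^p$; without it the sum could overcount a vertex and the argument would collapse. Apart from this, the proof is a direct combination of the minimum principle with the defining property of $(A)$.
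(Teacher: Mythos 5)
Your proof is correct and follows essentially the same route as the paper: use the minimum principle to show a negative value cannot be a local minimum, iterate to build an infinite path of connected vertices on which $u$ stays below $u(x_0)<0$, and invoke $(A)$ to contradict $u\in\ell^p(V,m)$, with the ``in particular'' clause handled by applying the first statement to $u$ and $-u$. The two points you flag --- passing to the connected component of $x_0$ so the minimum principle is legitimately applicable, and the distinctness of the $x_n$ needed to dominate $\sum_n m(x_n)|u(x_n)|^p$ by the full $\ell^p$ sum --- are left implicit in the paper but are exactly the right details to make explicit.
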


\noindent\textbf{Remark.} The situation for $p=\infty$ is substantially more complicated as can be seen  by  our discussion of stochastic completeness in Section~\ref{Stochastic} and in particular part (ii) of Theorem \ref{main0}.
\medskip

Using the previous minimum principle it is not hard to prove the following result.  The result is in fact true for general Dirichlet forms as can be inferred from  \cite{Sto,SV}. For $U\subseteq V$ we denote by $Q_U$ the closure of the $Q$ restricted to $C_c (U)$ and by $L_U$ the associated operator.

\begin{prop} (Domain monotonicity)
 Let $(V,b,c)$  be a symmetric graph.  Let $K_1 \subseteq V$  be finite and  $K_2 \subseteq  V$ with  $K_1 \subseteq K_2$ be given. Then, for any $x\in K_1$
$$ (L_{K_1} + \alpha)^{-1} f(x) \leq ( L_{K_2} + \alpha)^{-1} f
(x)$$
for all $f\in \ell^2 (V,m)$ with $f\geq 0$ and $\supp f \subseteq K_1$. A similar statement holds for the semigroups.
\end{prop}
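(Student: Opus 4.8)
The plan is to reduce the statement to an application of the minimum principle. Fix $f \in \ell^2(V,m)$ with $f \geq 0$ and $\supp f \subseteq K_1$, and set $u_i := (L_{K_i}+\alpha)^{-1} f$ for $i=1,2$. Since $\alpha > 0$ and $L_{K_i}$ is nonnegative selfadjoint, these resolvents exist and map $\ell^2(V,m)$ into the domain of $L_{K_i}$, which consists of functions supported in $\overline{K_i}$ (the closure being taken vertex-wise, i.e. effectively in $K_i$ since we work with $C_c(K_i)$); in particular $u_1$ vanishes outside $K_1$ and $u_2$ vanishes outside $K_2$. First I would record that $u_i \geq 0$: this follows from the positivity-preserving property of the resolvent of a Dirichlet-form operator, or alternatively directly from the minimum principle applied on each connected component of $K_i$, since $(\widetilde L + \alpha) u_i = f \geq 0$ on $K_i$ and $u_i = 0 \geq 0$ off $K_i$, so $u_i$ has no negative local minimum, hence $u_i \geq 0$ everywhere.

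Next I would set $w := u_2 - u_1$ and show $w \geq 0$ on $K_1$ (which is what is claimed, since we only assert the inequality for $x \in K_1$). The key identity is that on $K_1$ both $\widetilde L u_1$ and $\widetilde L u_2$ agree with the action of the respective operators, so $(\widetilde L + \alpha) u_1 = f = (\widetilde L + \alpha) u_2$ on $K_1$, whence $(\widetilde L + \alpha) w = 0$ on $K_1$. Off $K_1$ we do not have $w \geq 0$ in general, so I cannot apply the minimum principle to $w$ on all of $V \setminus K_1$. Instead the right move is to work on the connected components of $K_1$ one at a time, or more cleanly to observe that since $u_1 = 0$ on $V \setminus K_1$ and $u_2 \geq 0$ everywhere, we have $w = u_2 \geq 0$ on $V \setminus K_1$. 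Thus $w$ satisfies $(\widetilde L + \alpha) w = 0 \geq 0$ on $U := K_1$ and $w \geq 0$ on $V \setminus U$, so the Minimum principle (applied on each connected component of $K_1$, which is finite, hence a minimum is attained) gives that $w$ has no negative value at a local minimum on $K_1$; since $K_1$ is finite, $w$ attains its minimum over $K_1$ at some point, and that point is a local minimum of $w$ restricted to the component, forcing $w \geq 0$ on $K_1$. This yields $u_1(x) \leq u_2(x)$ for all $x \in K_1$.

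The one subtlety, and the step I expect to require the most care, is the bookkeeping around $\widetilde L$: I must make sure that $u_1, u_2 \in \widetilde F$ so that $\widetilde L u_i$ is defined pointwise, and that $\widetilde L u_i(x)$ genuinely equals $L_{K_i} u_i(x) = f(x)$ for $x \in K_1$. For $u_1$ this is essentially the content of Proposition~\ref{generator} applied to the graph restricted to $K_1$; for $u_2$ one needs that $L_{K_2}$ also acts as $\widetilde L$ at interior points, which again follows from Proposition~\ref{generator} (the restriction to $K_2$ is a graph Laplacian in its own right and $x \in K_1 \subseteq K_2$ has all its $b$-neighbors' contributions correctly accounted for because $\widetilde L$ only looks one step away and $u_2$ is globally defined). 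Once this identification is in hand the argument is just the minimum principle. For the semigroup statement I would either repeat the same argument with $e^{-tL_{K_i}}$ in place of the resolvents, using the parabolic minimum principle, or — more economically — deduce it from the resolvent inequality via the formula $e^{-tL} = \lim_{n\to\infty}\big(\tfrac{n}{t}(L+\tfrac{n}{t})^{-1}\big)^n$ (the Hille--Yosida/Euler approximation), since each factor is positivity-preserving and monotone in the domain, and the limit preserves the inequality. This last step is where I would be content to cite \cite{Sto,SV} rather than spell out the approximation.
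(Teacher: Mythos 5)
Your argument is correct and is precisely the route the paper intends: the paper offers no written proof beyond the remark that the result follows from the minimum principle, and your reduction of the resolvent inequality to the minimum principle applied to $w=u_2-u_1$ (together with the bookkeeping that each $L_{K_i}$ acts as $\widetilde{L}$ on functions extended by zero, and the Euler/Yosida approximation for the semigroup statement) is the standard way to carry that out. One cosmetic caveat: when you invoke the minimum principle on a single connected component $U$ of $K_1$, the hypothesis $w\geq 0$ on $V\setminus U$ has not been verified on $K_1\setminus U$; this is harmless because no edges join $U$ to $K_1\setminus U$ (equivalently, the global minimizer of $w$ over the finite set $K_1$ already satisfies $w(x_m)\leq w(y)$ for every neighbour $y$), but it deserves a word.
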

\medskip

\begin{prop}(Convergence of resolvents/semigroups)
 Let $(V,b,c)$  be a symmetric graph, $m$ a measure on $V$ with full support  and $Q$ the associated regular Dirichlet
 form. Let   $(K_n)$ be an
  increasing  sequence of finite  subsets
  of $V$ with $V =\bigcup K_n$.   Then,  $  (L_{K_n} + \alpha)^{-1}  f \to (L +\alpha)^{-1} f$, $ n\to \infty$ for any $f\in \ell^2 (K_1,m_{K_1})$.
(Here, $  (L_{K_n} + \alpha)^{-1}  f $ is
extended by zero to all of $V$.)
The corresponding statement also holds for
the semigroups.
\end{prop}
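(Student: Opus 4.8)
The plan is to reduce to $f\ge 0$, use the domain monotonicity of the preceding proposition to produce a pointwise increasing sequence of approximants, pass to the limit in $\ell^2$ by monotone convergence, and then identify the limit with $(L+\alpha)^{-1}f$ by a form argument that exploits regularity.

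First I would fix $\alpha>0$ and $f\in\ell^2(K_1,m_{K_1})$; splitting $f=f_+-f_-$ with $f_\pm\ge 0$ supported in $K_1$, it suffices to treat $f\ge 0$. Put $u_n:=(L_{K_n}+\alpha)^{-1}f$, extended by zero. Since $Q_{K_n}$ is a closed restriction of the closed nonnegative form $Q$, the operator $L_{K_n}$ is nonnegative, so $\|u_n\|\le\alpha^{-1}\|f\|$, and $(L_{K_n}+\alpha)^{-1}$ is positivity preserving, so $u_n\ge 0$; moreover $u_n$ is supported on the finite set $K_n$. Applying the domain monotonicity proposition to $K_n\subseteq K_{n+1}$ (valid because $\supp f\subseteq K_1\subseteq K_n$, and trivial off $K_n$) gives $0\le u_n(x)\le u_{n+1}(x)$ for every $x\in V$. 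Hence $u_n\uparrow u_\infty$ pointwise with $u_\infty\ge 0$; by Fatou $u_\infty\in\ell^2(V,m)$ and $\|u_\infty\|\le\alpha^{-1}\|f\|$, and then $u_n\to u_\infty$ in $\ell^2(V,m)$ by dominated convergence with dominating function $u_\infty^2\in\ell^1(V,m)$.

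The crucial point is that $u_\infty=(L+\alpha)^{-1}f$. Pairing $(L_{K_n}+\alpha)u_n=f$ with $u_n$ gives $Q_{K_n}(u_n)+\alpha\|u_n\|^2=\langle f,u_n\rangle\le\alpha^{-1}\|f\|^2$, and as $Q_{K_n}$ is a restriction of $Q$ this says $Q(u_n)=Q_{K_n}(u_n)\le\alpha^{-1}\|f\|^2$. Thus $(u_n)$ is bounded in the Hilbert space $D(Q)$ with inner product $Q(\cdot,\cdot)+\langle\cdot,\cdot\rangle$. Every weakly convergent subsequence in $D(Q)$ also converges weakly in $\ell^2$ and therefore has limit $u_\infty$; hence $u_\infty\in D(Q)$ and, $(u_n)$ being bounded with this as its only weak subsequential limit, $u_n\rightharpoonup u_\infty$ weakly in $D(Q)$. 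For $v\in C_c(V)$ one has $v\in C_c(K_n)\subseteq D(Q_{K_n})$ for $n$ large, so $Q(u_n,v)+\alpha\langle u_n,v\rangle=Q_{K_n}(u_n,v)+\alpha\langle u_n,v\rangle=\langle f,v\rangle$; letting $n\to\infty$, using weak convergence in $D(Q)$ and strong convergence in $\ell^2$, yields $Q(u_\infty,v)+\alpha\langle u_\infty,v\rangle=\langle f,v\rangle$. By regularity $C_c(V)$ is dense in $D(Q)$, so this holds for all $v\in D(Q)$, which is precisely the assertion $u_\infty=(L+\alpha)^{-1}f$. The same argument works with $K_1$ replaced by any $K_j$.

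For the semigroups I would note that $\bigcup_j\ell^2(K_j,m_{K_j})$ is dense in $\ell^2(V,m)$ while $\|(L_{K_n}+\alpha)^{-1}\|\le\alpha^{-1}$ uniformly, so what has been shown upgrades to strong resolvent convergence $(L_{K_n}+\alpha)^{-1}\to(L+\alpha)^{-1}$; the Trotter--Kato theorem then gives $e^{-tL_{K_n}}\to e^{-tL}$ strongly, uniformly for $t$ in compact subsets of $[0,\infty)$. (Alternatively one repeats the monotonicity argument for the semigroups via the semigroup form of the preceding proposition, or invokes the monotone convergence theorem for the decreasing sequence of forms $Q_{K_n}$, whose monotone limit is $Q$ by regularity.) I expect the identification $u_\infty=(L+\alpha)^{-1}f$ to be the only genuine obstacle: the monotone approximation produces a limit immediately, but recognizing it as the resolvent requires the uniform form bound, weak compactness in $D(Q)$, and the fact that $C_c(V)$ is a form core --- exactly the regularity hypothesis.
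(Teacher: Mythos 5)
Your proof is correct. Note that the paper itself states this proposition without proof, deferring to the general Dirichlet-form literature (Stollmann, Stollmann--Voigt) and to \cite{KL1}; your argument is essentially the standard one that those references (and the preceding ``domain monotonicity'' proposition) are set up to deliver: monotone approximation of the resolvent, a uniform bound $Q(u_n)+\alpha\|u_n\|^2\le \alpha^{-1}\|f\|^2$ giving weak compactness in the form domain, and identification of the limit through the first representation theorem together with the regularity hypothesis that $C_c(V)$ is a form core. You also correctly patch the one small gap in applying domain monotonicity, namely that the stated inequality only holds on the smaller set $K_n$, by observing $u_n=0\le u_{n+1}$ off $K_n$. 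The only point I would tighten is the semigroup step: the operators $L_{K_n}$ act on the varying subspaces $\ell^2(K_n,m_{K_n})$, so the textbook Trotter--Kato theorem does not apply verbatim; either interpret $(L_{K_n}+\alpha)^{-1}$ as $(L_{K_n}+\alpha)^{-1}P_{K_n}$ on all of $\ell^2(V,m)$ before invoking it, or (cleaner, and as you suggest in your parenthetical) use the monotone convergence theorem for the decreasing sequence of forms $Q_{K_n}\downarrow Q$, which yields strong convergence of resolvents and semigroups simultaneously.
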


\section{Boundedness of the Laplacian} \label{Boundedness}
Our main topic  in this paper are the consequences of unboundedness of the Laplacian. In order to understand this unboundedness it is desirable to characterize  boundedness of this operator. This is discussed in this section. We start with a little trick on how to get rid of the $c$ in certain situations.
\medskip

Let $\dot V$ be the union of $V$ and a point at infinity $\infty$. We extend a function on $V$ to $\dot V$ by zero and let $b (\infty,x) = b(x,\infty)=c(x)$ for all $x\in V$.   We then have
$$ \sum_{y\in \dot V} b(x,y) =\sum_{y\in V} b(x,y) + c(x)$$
for all $x\in V$ and
$$Q(u)=\frac{1}{2}\sum_{x,y\in \dot V} b(x,y)(u(x) - u(y))^2$$
for all functions $u$ in $D(Q)$.

We define an averaged vertex degree $d=d_{b,c,m}$ by
$$d(x):=\frac{1}{m(x)}\ab{\sum_{y\in V}b(x,y)+c(x)}.$$
Note that $d(x)=n(x)/m(x)$, where $n$ was defined at the end of Section~1.3.
\medskip

\begin{theorem} \label{boundednesthm} Let $(V,b,c)$ be a weighted graph and $m: V\longrightarrow (0,\infty)$ a measure on $V$ and $\widetilde{L}$ the associated formal operator.  Then, the following assertions are equivalent:
\begin{itemize}
\item[(i)] There exists a $C\geq 0$ with $d(x) \leq C$ for all $x\in V$.
\item[(ii)] The form $Q$ is bounded on $\ell^2 (V,m)$.
\item[(iii)] The restriction of $\widetilde{L}$ to $\ell^2 (V,m)$ is bounded.
\item[(iv)] The restriction of $\widetilde{L}$ to $\ell^\infty (V)$ is bounded.
\end{itemize}
In this case the restriction of $\widetilde{L}$ to $\ell^p (V,m)$ is a bounded operator for all $p\in [1,\infty]$ and a bound is given by $ 2 C$ with $C$  from (i).
\begin{proof} By the considerations at the beginning of the section we can assume $c\equiv 0$. For $x\in V$ we let $\delta_x$ be the function on $V$ which is zero everywhere except in $x$, where it takes the value $1$.
\medskip

The equivalence between (ii) and (iii) is  obvious as  the operator associated to $Q$ is a densely defined  restriction of $\widetilde{L}$.
\medskip

Obviously (i) implies (iv) (with the bound $2 C$).  The implication (iv)$\Longrightarrow$ (i) follows by considering the vectors $\delta_x$, $x\in V$.
\medskip

(i) $\Longrightarrow $ (ii): As $(a-b)^2 \leq 2 a^2 + 2 b^2$ we obtain
\begin{eqnarray*}
Q(u,u) &=&  \frac{1}{2} \sum_{x,y\in V} b(x,y) (u(x) - u(y))^2\\
&\leq & \sum_{x,y\in V} b(x,y) u(x)^2 + \sum_{x,y\in V} b(x,y) u(y)^2\\
&\leq & C \sum_{x\in V} m(x) u(x)^2 +  C \sum_{y\in V} m(y) u(y)^2\\
&=& 2 C  \|u\|^2.
\end{eqnarray*}
Here, we used the symmetry of $b$ and the bound (i) in the previous to the  last step.
\medskip

(ii) $\Longrightarrow $ (i): This follows easily as $Q (\delta_x,\delta_x) = \sum_{y\in V} b(x,y)$ for all $x\in V$.
\medskip

It remains to show the last statement: By interpolation between $\ell^2$ and $\ell^\infty$, we obtain boundedness of the operators on $\ell^p (V,m)$ for $p\in [2,\infty]$. Using symmetry we obtain the boundedness for $p\in  [1,2)$. Alternatively, we can directly establish that (i) implies the boundedness of the restriction of $\widetilde{L}$ on $\ell^1 (V,m)$. As a bound for the operator norm on $\ell^\infty$ and on $\ell^2$ is $2 C$, we obtain this same bound on all $\ell^p$.
\end{proof}
\end{theorem}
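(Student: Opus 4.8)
The plan is to begin with the reduction explained at the start of the section: adjoining a point $\infty$ with $b(\infty,x)=b(x,\infty)=c(x)$ lets us assume $c\equiv 0$, so that $d(x)=\frac{1}{m(x)}\sum_{y} b(x,y)$ and $\widetilde L u(x)=\frac{1}{m(x)}\sum_y b(x,y)(u(x)-u(y))$; every bound proved in this reduced setting transfers to the original graph, with the factor $2$ below automatically accounting for the $c$-term.

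I would then run the cycle (i)$\,\Rightarrow\,$(ii)$\,\Leftrightarrow\,$(iii), (ii)$\,\Rightarrow\,$(i), together with the separate equivalence (i)$\,\Leftrightarrow\,$(iv). For (i)$\,\Rightarrow\,$(ii) I would apply $(s-t)^2\le 2s^2+2t^2$ inside the double sum defining $Q$ and use the symmetry of $b$ to get $Q(u)\le 2\sum_x\bigl(\sum_y b(x,y)\bigr)u(x)^2=2\sum_x m(x)d(x)u(x)^2\le 2C\|u\|^2$; in particular $\|L\|\le 2C$. The equivalence (ii)$\,\Leftrightarrow\,$(iii) is then immediate from Proposition~\ref{generator}: a closed nonnegative form is bounded iff the associated self-adjoint operator is, and when $Q$ is bounded one has $D(Q)=\ell^2(V,m)$ by lower semicontinuity, hence $D(L)=\ell^2(V,m)$ and $\widetilde L=L$ on all of $\ell^2(V,m)$. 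Both (ii)$\,\Rightarrow\,$(i) and (iv)$\,\Rightarrow\,$(i) I would obtain by testing on the indicators $\delta_x$: one computes $Q(\delta_x)=\sum_y b(x,y)=m(x)d(x)$ with $\|\delta_x\|^2=m(x)$, and $\widetilde L\delta_x(x)=d(x)$ with $\|\delta_x\|_\infty=1$, so each of (ii), (iv) forces $d$ to be bounded by the corresponding form/operator norm. Finally (i)$\,\Rightarrow\,$(iv) is the crude estimate $|\widetilde L u(x)|\le\frac{1}{m(x)}\sum_y b(x,y)(|u(x)|+|u(y)|)\le 2d(x)\|u\|_\infty\le 2C\|u\|_\infty$.

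For the last assertion, (i)$\,\Rightarrow\,$(ii) supplies the bound $2C$ on $\ell^2(V,m)$ and (i)$\,\Rightarrow\,$(iv) the bound $2C$ on $\ell^\infty(V)$; Riesz--Thorin interpolation then yields boundedness on $\ell^p(V,m)$ for $p\in[2,\infty]$ with the same constant $2C$, and $p\in[1,2)$ follows either by the analogous direct computation $\|\widetilde L u\|_1\le\sum_{x,y}b(x,y)(|u(x)|+|u(y)|)=2\sum_x m(x)d(x)|u(x)|\le 2C\|u\|_1$, or by observing that the operator on $\ell^1(V,m)$ is the transpose, relative to the pairing $\langle\cdot,\cdot\rangle_m$, of the operator on $\ell^\infty(V)$, since $\widetilde L$ is formally symmetric for that pairing.

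I expect the only subtlety to be bookkeeping rather than substance: one should make sure that the set $\{u\in\ell^2(V,m):\widetilde L u\in\ell^2(V,m)\}$ appearing in (iii) is dense (under (ii) it is all of $\ell^2(V,m)$), and that the interchanges of summation used above are legitimate --- both of which are secured by (b2) together with the boundedness of $d$. Nothing here requires condition $(A)$ or regularity beyond what is already recorded in the excerpt.
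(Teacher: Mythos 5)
Your proposal is correct and follows essentially the same route as the paper: the reduction to $c\equiv 0$ via the point at infinity, the inequality $(s-t)^2\le 2s^2+2t^2$ for (i)$\Rightarrow$(ii), testing on the $\delta_x$ for (ii)$\Rightarrow$(i) and (iv)$\Rightarrow$(i), the crude pointwise estimate for (i)$\Rightarrow$(iv), and interpolation plus duality (or the direct $\ell^1$ computation) for the final assertion. The only difference is that you spell out the (ii)$\Leftrightarrow$(iii) step and the summation interchanges in slightly more detail than the paper, which simply declares them obvious.
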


\noindent\textbf{Remark.} The theorem can be seen as a generalization of the well known fact that a stochastic matrix generates an operator which is bounded on all $\ell^p$.
\medskip

Note that the theorem gives in particular that boundedness of the operator $\widetilde{L}$  on $\ell^2 (V,m)$ is equivalent to boundedness on $\ell^\infty (V)$. This is far from being true for all symmetric operators on $\ell^2 (V,m)$. For example, let $A$ be the operator on $\ell^2 (\NN,1)$ with matrix given by $a_{x,y} = 1/x$ if $y=1$ and $a_{x,y} = 1/y$ if $x=1$ and $a_{x,y} =0$ otherwise. Then, $A$ is bounded on $\ell^2$ but not on $\ell^\infty$. Conversely, using e.g. the measure $m(x) = x^{-4}$ on $\NN$  and suitable operators with  only one or two ones in each row it is not hard to construct a bounded operator on $\ell^\infty (\NN)$ which is symmetric but  not bounded on $\ell^2 (V,m)$. Of course, if $m$ is such that $\ell^2 (V,m)$ is contained in $\ell^\infty (V)$ then any bounded operator on $\ell^\infty$ which is symmetric (and hence closed) on $\ell^2 $ must be bounded as well.

\section{Co-area formulae}\label{Co}
In this section we discuss some co-area type formulae. These formulae are well known for locally finite graphs e.g. \cite{CGY} and carry over easily to our setting. They are useful in many contexts as e.g. the estimation of eigenvalues via isoperimetric inequalities.  We use them in this spirit as well.
\medskip

We start with some notation. Let $(V,b,c)$ be a weighted graph with $c\equiv 0$, (which can assume without loss of generality by the trick mentioned in the beginning of Section~\ref{Boundedness}). For a subset $\varOmega\subseteq V$ we define
$$\partial \varOmega :=\{(x,y) : \{x,y\} \cap \varOmega\neq \emptyset\;\:\mbox{and}\;\: \{x,y\} \cap V\setminus \varOmega\neq \emptyset\} $$
and
$$|\partial \varOmega|:=\frac{1}{2} \sum_{(x,y)\in \partial\varOmega} b(x,y).$$

We can now come to the so called co-area formula.

\begin{theorem} (Co-area formula) \label{coareaeins}  Let $(V,b,c)$ be a weighted graph with $c\equiv 0$. Let $ f : V\longrightarrow \R$ be given and define for $t\in \R$ the set $\varOmega_t:=\{x\in V: f(x) >~t\}$. Then,
$$\frac{1}{2} \sum_{x,y\in V} b(x,y) |f(x) -  f(y)| = \int_0^\infty |\partial \varOmega_t| dt. $$
\begin{proof} For $x,y\in V$ with $x\neq y$ we define the interval $I_{x,y}$ by
$$I_{x,y}:=[\min\{f(x),f(y)\}, \max\{f(x),f(y)\})$$
and let $|I_{x,y}|$ be the length of the interval. Let $1_{x,y}$ be the characteristic function of $I_{x,y}$. Then, $(x,y)\in \partial\varOmega_t$  if and only if  $t\in I_{x,y}$. Thus,
$$ |\partial \varOmega_t| = \frac{1}{2} \sum_{x,y\in V} b(x,y) 1_{x,y} (t).$$
Thus, we can calculate
\begin{eqnarray*}
\int_0^\infty |\partial \varOmega_t| dt &=& \frac{1}{2} \int_0^\infty \sum_{x,y\in V} b(x,y) 1_{x,y} (t) dt\\
&=& \frac{1}{2} \sum_{x,y\in V}b(x,y) \int_0^\infty 1_{x,y} (t) dt\\
&=& \frac{1}{2} \sum_{x,y\in V} b(x,y) |f(y) - f(x)|.
\end{eqnarray*}
This finishes the proof.
\end{proof}
\noindent\textbf{Remark. }  Note that the proof is essentially a Fubini type argument.
\end{theorem}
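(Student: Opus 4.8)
The plan is to establish the identity pointwise in $t$ and then integrate, the interchange of summation and integration being the ``Fubini type'' step alluded to in the remark. First I would, for each ordered pair $(x,y)\in V\times V$ with $x\neq y$, introduce the half-open interval $I_{x,y}:=[\min\{f(x),f(y)\},\max\{f(x),f(y)\})$ together with its characteristic function $\mathbf{1}_{x,y}$ on $\R$, and check the elementary equivalence: $(x,y)\in\partial\varOmega_t$ precisely when exactly one of $x,y$ lies in $\varOmega_t=\{f>t\}$, which --- because of the strict inequality defining $\varOmega_t$ --- happens precisely when $t\in I_{x,y}$. Summing the weights $b(x,y)$ over $\partial\varOmega_t$ then gives, for every $t\in\R$,
$$|\partial\varOmega_t|=\frac{1}{2}\sum_{x,y\in V}b(x,y)\,\mathbf{1}_{x,y}(t),$$
where the factor $\tfrac{1}{2}$ absorbs the two orderings of each unordered pair and the diagonal $x=y$ contributes nothing.

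Next I would integrate this equality over $t\in(0,\infty)$. Since every summand is nonnegative and each $t\mapsto\mathbf{1}_{x,y}(t)$ is measurable, Tonelli's theorem permits exchanging the sum and the integral, and no finiteness hypothesis is needed: both sides are allowed to equal $+\infty$, and the identity then holds in $[0,\infty]$. This reduces matters to evaluating the inner integral $\int_0^\infty\mathbf{1}_{x,y}(t)\,dt$, which is simply the length of $I_{x,y}\cap[0,\infty)$. For $f\geq 0$ --- the case in which the formula is applied --- this set is all of $I_{x,y}$, so the integral equals $|f(x)-f(y)|$, and substituting back yields the asserted formula. (If $f$ attains negative values the statement should be read with $f$ replaced by $f\vee 0$, or with the integral taken over all of $\R$; the argument is otherwise unchanged.)

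The proof carries no analytic weight, so the one place where care is genuinely needed is the first step: matching the \emph{half-open} convention for $I_{x,y}$ to the \emph{strict} inequality ``$f(x)>t$'' in the definition of $\varOmega_t$, so that the two endpoint values $t=\min\{f(x),f(y)\}$ and $t=\max\{f(x),f(y)\}$ are treated consistently. This concerns only a null set of $t$'s, but the bookkeeping has to be exact for the pointwise identity to be literally true --- and it is that identity, rather than any convergence subtlety, that is the real content of the statement.
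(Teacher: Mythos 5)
Your proof is correct and follows essentially the same route as the paper: the half-open intervals $I_{x,y}$, the pointwise identity $|\partial\varOmega_t|=\frac{1}{2}\sum_{x,y}b(x,y)\mathbf{1}_{x,y}(t)$, and a Tonelli interchange. Your added remark that the stated identity literally requires $f\geq 0$ (otherwise $\int_0^\infty\mathbf{1}_{x,y}\,dt$ is only the length of $I_{x,y}\cap[0,\infty)$) is a valid observation the paper glosses over, though harmless since the formula is applied with $f=\varphi^2$.
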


The preceding formula can be seen as a first order co-area formula as it deals with differences of functions. There is also a zeroth order co-area type formula dealing with functions themselves. This is discussed next.

\begin{theorem}\label{coareazwei}Let $V$ be a countable set and $m:V\longrightarrow (0,\infty)$ a measure on $V$.  Let $ f : V\longrightarrow [0,\infty)$ be given and define for $t\in \R$ the set $\varOmega_t:=\{x\in V: f(x) > t\}$. Then,
$$\sum_{x\in V} m(x) f(x) = \int_0^\infty m (\varOmega_t) dt.$$
\begin{proof} We have $x\in \varOmega_t$ if and only if $1_{(t,\infty)} (f(x)) =1$. Thus, we can calculate
\begin{eqnarray*}
\int_0^\infty m(\varOmega_t) dt & = & \int_0^\infty \sum_{x\in \varOmega_t} m(x) dt\\
&=& \int_0^\infty \sum_{x\in V} m(x) 1_{(t,\infty)} (f(x)) dt\\
&=& \sum_{x\in V} m(x) \int_0^\infty 1_{(t,\infty)} (f(x)) dt\\
&=& \sum_{x\in V} m(x)  f(x).
\end{eqnarray*}
This finishes the proof.
\end{proof}
\end{theorem}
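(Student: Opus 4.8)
The plan is to run the same Fubini/Tonelli mechanism that underlies the first-order co-area formula, now applied to the layer-cake representation of a nonnegative number. First I would record the elementary identity $a=\int_0^\infty 1_{(t,\infty)}(a)\,dt$, valid for every $a\in[0,\infty)$, which merely says that the Lebesgue measure of $[0,a)$ equals $a$. Taking $a=f(x)$ and multiplying by $m(x)$ gives, for each fixed $x\in V$,
\[
m(x)f(x)=\int_0^\infty m(x)\,1_{(t,\infty)}(f(x))\,dt .
\]

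Next I would sum this over $x\in V$ and interchange the summation with the integration. Since $V$ is countable, $\sum_{x\in V}m(x)(\cdots)$ is integration against the measure on $V$ that assigns mass $m(x)$ to the singleton $\{x\}$, and the integrand $(x,t)\mapsto m(x)\,1_{(t,\infty)}(f(x))$ is nonnegative and measurable on the product $V\times(0,\infty)$ (measurability in the discrete variable is automatic, and for fixed $x$ the function of $t$ is the indicator of a half-line). Hence Tonelli's theorem justifies the interchange with no integrability hypothesis, and
\[
\sum_{x\in V}m(x)f(x)=\int_0^\infty\Bigl(\sum_{x\in V}m(x)\,1_{(t,\infty)}(f(x))\Bigr)dt .
\]
Finally I would identify the inner sum: $1_{(t,\infty)}(f(x))=1$ precisely when $x\in\varOmega_t$, so $\sum_{x\in V}m(x)\,1_{(t,\infty)}(f(x))=m(\varOmega_t)$, and substituting gives the claimed formula. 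The function $t\mapsto m(\varOmega_t)$ is nonincreasing, hence Lebesgue measurable, so the right-hand integral is well defined in $[0,\infty]$; it is finite exactly when $f\in\ell^1(V,m)$.

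The only point that requires any care is the interchange of sum and integral, and this is harmless here: all quantities in sight are nonnegative, so Tonelli applies unconditionally and the identity holds as an equality in $[0,\infty]$, with no finiteness assumption on $f$ needed. Everything else is bookkeeping, so I do not expect a genuine obstacle — the proof is, as for Theorem~\ref{coareaeins}, essentially a Fubini-type argument.
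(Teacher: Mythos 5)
Your proof is correct and follows essentially the same route as the paper's: the layer-cake identity $f(x)=\int_0^\infty 1_{(t,\infty)}(f(x))\,dt$ followed by a Tonelli interchange of the sum over $V$ with the integral over $t$. You merely run the computation in the opposite direction and make the appeal to Tonelli explicit, which the paper leaves implicit.
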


\section{Isoperimetric inequalities and lower bounds on the (essential) spectrum}\label{Isoperimetric}
In this section we will provide lower bound on the infimum of the (essential) spectrum using an isoperimetric inequality. This will allow us in particular to provide criteria for emptiness of the essential spectrum. Our considerations  extend the corresponding parts of \cite{Dod0,DK,Fuj,Kel} (as discussed in more detail below).
\medskip

We start with some notation used throughout this section.
Let a weighted graph $(V,b,c)$ with a measure $m: V\longrightarrow (0,\infty)$ and the associated Dirichlet form $Q$ be given.  In this setting we define   the constant $\al(U)=\al_{b,c,m}(U)$ for a subset $U\subseteq V$ by
$$\al(U)=\inf_{W\subseteq U, |W|<\infty} \frac{|\dd W|}{m(W)},$$
where as introduced in the previous section
$$|\dd W|=\sum_{x\in W, y\not\in W}b(x,y)+\sum_{x\in W}c(x).$$
Note that for a finite set $W$ and  the  characteristic function $1_W$  of $W$ one has
\begin{equation}\label{var}
\frac{|\dd W|}{m(W)}= \frac{Q(1_W)}{\aV{1_W}^2}.
\end{equation}
Recall the definition of the normalizing measure $n$ on $V$
$$n(x)=\sum_{y\in V}b(x,y)+c(x).$$
Thus, we have two measures and thus two Hilbert spaces at our disposal.  To avoid confusion, we will write $\|\cdot\|_m$ and $\|\cdot\|_n$ for the corresponding norms whenever necessary.
\medskip

Note that $d(x)=n(x)/m(x)$. Define maximal and minimal averaged vertex degree by
$$d_U=d_{b,c,m}(U)=\inf_{x\in U}d(x)$$
and
$$ D_U=D_{b,c,m}(U)=\sup_{x\in U}d(x),$$
where $d$ is the averaged vertex degree, which was defined in Section~~\ref{Boundedness}
Recall $d(x)=n(x)/m(x)$ for $x\in V$.
\medskip

We will also need the restrictions of operators on $V$ to subsets of $V$. As in the end of Section~2
denote the closure of the restriction of a closed semibounded form $Q$ with domain containing $C_c (V)$ to $C_c (U)$ by $Q_U$ and its associated operator by $L_U$ (for $U\subseteq V$ arbitrary).
\medskip

For later use we also note that for the  Dirichlet form $Q$  associated to a graph $(V,b,c)$ with measure $m$ on $V$
we have
$$ \inf \sigma (L_U) =\inf_{u\in C_c (U)}  \frac{ Q(u)}{\|u\|^2} \leq \alpha (U) \leq  \inf_{x\in U} d (x) = d_U$$
for any $U\subseteq V$.
Here, the first equality is just the variational principle for forms, the second step follows from the definition of $\alpha$ and the last estimate follows by choosing $W = \{x\}$ for $x\in U$.
In particular, $\alpha$ gives upper bound on the infimum of the spectrum. It is a  remarkable (and well known) fact  that $\alpha>0$ implies also a  lower bounds on the infimum of spectra. This is the core of the present section.

\subsection{An isoperimetric inequality}
In this subsection we provide an isoperimetric inequality in our setting. This inequality (and its proof) are   generalizations of the corresponding considerations of \cite{DK,Fuj,Kel} to our  setting.

\begin{prop}\label{p:Q}  Let $(V,b,c)$ be a weighted graph, $m: V\longrightarrow (0,\infty)$ a measure on $V$ and $Q$ the associated regular Dirichlet form.
Let $U\subseteq V$ and $\phi \in C_c (U)$. Then
\begin{eqnarray*}
Q(\ph)^2-2\|\varphi\|_{n}^2 Q(\ph)+\al_{b,c,m}(U)^2\|\varphi\|_{m}^4\leq 0.
\end{eqnarray*}
\begin{proof}  By the trick introduced at the beginning of Section
we can assume
without loss of generality  that $c\equiv 0$.
Define now $\A$ by
$$\A=\frac{1}{2}\sum_{x,y\in \dot V}b(x,y)\av{\varphi(x)^2-\varphi(y)^2} =
\sum_{x,y\in \dot V}b(x,y)|\varphi(x)-\varphi(y)| |\varphi(x)+\varphi(y)|
.$$
Following ideas  of  \cite{DK}  for locally finite graphs (see \cite{Fuj,Kel} as well) we  now proceed as follows:
By Cauchy-Schwarz inequality and  a direct computation we have
\begin{eqnarray*}
\A^2  \leq
Q(\ph)
\ab{\frac{1}{2}\sum_{x,y\in \dot V}b(x,y)\av{\varphi(x)+\varphi(y)}^2}=Q(\ph)\ab{2\aV{\varphi}_{n}^2-Q(\ph)}.
\end{eqnarray*}
On the other hand we can use the first  co-area formula (with $f= \varphi^2$), the definition of $\alpha$ and the second co-area formula to estimate
\begin{eqnarray*}
\A =\int_0^\infty |\partial \varOmega_t| dt
\geq  \alpha \int_0^\infty m(\varOmega_t) dt
= \alpha\sum_{x\in V} m(x) \varphi^2 (x)
= \alpha \|\varphi\|_m^2.
\end{eqnarray*}
Combining the two estimates on $\A$ we obtain
$$   Q(\ph)\ab{2\aV{\ph}_{n}^2-Q(\ph)} \geq     \|\varphi\|_m^4.$$
This yields the desired result.
\end{proof}
\end{prop}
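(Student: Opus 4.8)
The plan is to follow the classical Dodziuk--Kendall/Fujiwara argument (as in \cite{DK,Fuj,Kel}), adapted to the Dirichlet form setting through the two co-area formulae of Section~\ref{Co}. First I would reduce to the case $c\equiv 0$ by the trick from the beginning of Section~\ref{Boundedness}: pass to $\dot V = V\cup\{\infty\}$ with $b(x,\infty)=b(\infty,x)=c(x)$ and extend $\varphi$ by $\varphi(\infty)=0$. This leaves $Q(\varphi)$, $\|\varphi\|_m$, $\|\varphi\|_n$ and $\al_{b,c,m}(U)$ all unchanged, and since $U\subseteq V$ the extra vertex never lies in any of the sets considered below.

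The core of the argument is to introduce
$$\A:=\frac{1}{2}\sum_{x,y\in\dot V}b(x,y)\,\av{\varphi(x)^2-\varphi(y)^2} = \frac{1}{2}\sum_{x,y\in\dot V}b(x,y)\,\av{\varphi(x)-\varphi(y)}\,\av{\varphi(x)+\varphi(y)}$$
and to estimate it from above and from below. For the upper bound I would apply Cauchy--Schwarz to the second expression, splitting each term as $\sqrt{b(x,y)}\,\av{\varphi(x)-\varphi(y)}$ times $\sqrt{b(x,y)}\,\av{\varphi(x)+\varphi(y)}$, obtaining
$$\A^2 \leq \Big(\frac{1}{2}\sum_{x,y}b(x,y)(\varphi(x)-\varphi(y))^2\Big)\Big(\frac{1}{2}\sum_{x,y}b(x,y)(\varphi(x)+\varphi(y))^2\Big).$$
The first factor is $Q(\varphi)$. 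For the second I would use the identity $\frac12\sum_{x,y}b(x,y)(\varphi(x)+\varphi(y))^2+\frac12\sum_{x,y}b(x,y)(\varphi(x)-\varphi(y))^2=\sum_{x,y}b(x,y)(\varphi(x)^2+\varphi(y)^2)=2\|\varphi\|_n^2$, which follows from symmetry of $b$ and $\sum_{y\in\dot V}b(x,y)=n(x)$; hence the second factor equals $2\|\varphi\|_n^2-Q(\varphi)$, and so $\A^2\leq Q(\varphi)\big(2\|\varphi\|_n^2-Q(\varphi)\big)$.

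For the lower bound I would apply the co-area formulae with $f=\varphi^2$. Since $\varphi\in C_c(U)$, $f\geq 0$ is finitely supported and each superlevel set $\varOmega_t=\{x:\varphi(x)^2>t\}$ is a \emph{finite} subset of $U$ for $t>0$. Theorem~\ref{coareaeins} (on $\dot V$) gives $\A=\int_0^\infty|\dd\varOmega_t|\,dt$; the definition of $\al_{b,c,m}(U)$ gives $|\dd\varOmega_t|\geq \al_{b,c,m}(U)\,m(\varOmega_t)$ for each such $t$; and Theorem~\ref{coareazwei} gives $\int_0^\infty m(\varOmega_t)\,dt=\sum_x m(x)\varphi(x)^2=\|\varphi\|_m^2$. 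Combining these, $\A\geq \al_{b,c,m}(U)\,\|\varphi\|_m^2$. Putting the two bounds together, $Q(\varphi)\big(2\|\varphi\|_n^2-Q(\varphi)\big)\geq \A^2\geq \al_{b,c,m}(U)^2\|\varphi\|_m^4$, which rearranges to the asserted inequality. I expect the only genuine subtlety to be the bookkeeping: checking that the reduction to $\dot V$ disturbs none of the four quantities, and that each $\varOmega_t$ is a finite subset of $U$ so that $\al_{b,c,m}(U)$ may legitimately be inserted inside the $t$-integral; the Cauchy--Schwarz step and the two quadratic identities for $\frac12\sum b(x,y)(\varphi(x)\pm\varphi(y))^2$ are routine computations.
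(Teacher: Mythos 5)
Your proof is correct and follows essentially the same route as the paper: reduction to $c\equiv 0$ via $\dot V$, the quantity $\A$, Cauchy--Schwarz for the upper bound, and the two co-area formulae plus the definition of $\al_{b,c,m}(U)$ for the lower bound. Your write-up is in fact slightly more careful than the paper's on two points -- you keep the factor $\tfrac12$ in the second expression for $\A$ and the factor $\al_{b,c,m}(U)^2$ in the final combined inequality, both of which are dropped (as typos) in the paper's version.
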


\subsection{Lower bounds for the infimum of the spectrum}
In this section we use the isoperimetric inequality of the previous section to derive bounds on the form $Q$. This is in the spirit of \cite{DK,Fuj,Kel}.  As usual we write
$$ a \leq Q \leq b$$
(for $a,b\in \R$) whenever
$$ a \|u\|^2 \leq Q(u) \leq b \|u\|^2$$
 for all $u\in D(Q)$.

\begin{prop}\label{p:sp}
Let $(V,b,c)$ be a weighted graph, $m: V\longrightarrow (0,\infty)$ a measure on $V$ and $Q$ the associated regular Dirichlet form. Let $U\subseteq V$ be given and $Q_U$ the restriction of $Q$ to $U$. Then,
$$d_U\ab{1-\sqrt{1-\al_{b,c,n}(U)^2}}
\leq  Q_U \leq
D_U\ab{1 + \sqrt{1 - \al_{b,c,n}(U)^2}}.$$
If $D_U<\infty$ then furthermore
\begin{eqnarray*}
D_U-\sqrt{D_U^2-\al_{b,c,m}(U)^2}\leq Q_U \leq &D_U+\sqrt{D_U^2-\al_{b,c,m}(U)^2}.
\end{eqnarray*}
\begin{proof} We start by proving the first statement.  Consider an arbitrary $\varphi\in C_c (U)$ with $\|\varphi\|_n=1$. Then, Proposition \ref{p:Q} (applied with  $m =n$) gives
$$ Q(\varphi)^2-2  Q(\varphi)+\al_{b,c,n}(U)^2\leq 0 $$
and hence
$$ 1- \sqrt{1 - \al_{b,c,n}(U)^2}  \leq Q(\varphi) \leq 1 + \sqrt{1 - \al_{b,c,n}(U)^2}. $$
As this holds for all $\varphi \in C_c (U)$ with $\|\varphi\|_n=1$ and
$$ d_U \|\varphi\|_m \leq \|\varphi\|_n \leq D_U \|\varphi\|_m
$$ by definition of $d_U$ and $D_U$, we obtain the first statement.
\medskip

We now turn to the last statement. By definition of $D_U$ we have $\|\varphi\|_n \leq D_U\|\varphi\|_m$. Thus, Proposition \ref{p:Q} gives
$$ Q(\ph)^2-2 D_U \|\varphi\|_m^2 Q(\ph)+\al_{b,c,m}(U)^2\|\varphi\|_m^4\leq 0.$$
Considering now $\varphi\in C_c (U)$ with $\|\varphi\|_m=1$ we find  that
$$D_U-\sqrt{D_U^2-\al_{b,c,m}(U)}\leq Q(\ph)\leq D_U+\sqrt{D_U^2-\al_{b,c,m}(U)}$$
for all such $\varphi$.
This finishes the proof.
\end{proof}
\end{prop}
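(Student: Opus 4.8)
The plan is to derive both pairs of bounds directly from the quadratic inequality in Proposition~\ref{p:Q} by treating it as a quadratic in the scalar $Q(\ph)$ and reading off where the parabola is nonpositive. First I would fix an arbitrary $\ph\in C_c(U)$ and observe that Proposition~\ref{p:Q} asserts $Q(\ph)^2-2\|\ph\|_n^2\,Q(\ph)+\al_{b,c,m}(U)^2\|\ph\|_m^4\le 0$; since a quadratic $t^2-2\beta t+\gamma$ with $\beta\ge 0$ is nonpositive exactly between its roots $\beta\pm\sqrt{\beta^2-\gamma}$, this forces $\beta-\sqrt{\beta^2-\gamma}\le Q(\ph)\le \beta+\sqrt{\beta^2-\gamma}$ (and in particular the discriminant is nonnegative, so the square root is real).

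For the first (normalized) statement I would apply the proposition with the measure $n$ in place of $m$, as is already done in Proposition~\ref{p:sp}'s proof of the first bound: normalizing so that $\|\ph\|_n=1$ yields $1-\sqrt{1-\al_{b,c,n}(U)^2}\le Q(\ph)\le 1+\sqrt{1-\al_{b,c,n}(U)^2}$, and then I would translate back to the $m$-norm using the elementary two-sided estimate $d_U\|\ph\|_m\le\|\ph\|_n\le D_U\|\ph\|_m$, which follows from $d(x)=n(x)/m(x)$ and the definitions of $d_U$ and $D_U$. Dividing through by $\|\ph\|_m^2$ and taking the infimum/supremum over $\ph\in C_c(U)$ gives $d_U(1-\sqrt{1-\al_{b,c,n}(U)^2})\le Q_U\le D_U(1+\sqrt{1-\al_{b,c,n}(U)^2})$ in the sense of forms.

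For the second statement, assuming $D_U<\infty$, I would instead keep the measure $m$ but first use $\|\ph\|_n\le D_U\|\ph\|_m$ to weaken the middle term of the quadratic: since the coefficient of $Q(\ph)$ is $-2\|\ph\|_n^2$ and $Q(\ph)\ge 0$, replacing $\|\ph\|_n^2$ by the larger $D_U\|\ph\|_m^2$ only makes the left-hand side larger, so $Q(\ph)^2-2D_U\|\ph\|_m^2\,Q(\ph)+\al_{b,c,m}(U)^2\|\ph\|_m^4\le 0$. Normalizing $\|\ph\|_m=1$ and reading off the roots gives $D_U-\sqrt{D_U^2-\al_{b,c,m}(U)^2}\le Q(\ph)\le D_U+\sqrt{D_U^2-\al_{b,c,m}(U)^2}$ for all such $\ph$, which is the claimed form inequality after taking infimum and supremum over $C_c(U)$.

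The only genuinely delicate point is the sign/monotonicity argument in the last part: one must be sure that enlarging the (negative) linear coefficient preserves the inequality, which uses $Q(\ph)\ge 0$ and $\al_{b,c,m}(U)^2\ge 0$ together with $\|\ph\|_n\le D_U\|\ph\|_m$; everything else is the standard "roots of a quadratic" bookkeeping plus passing from pointwise-in-$\ph$ inequalities to the form ordering $a\le Q_U\le b$, which is immediate from the variational characterization of $Q_U$ as the closure of $Q$ restricted to $C_c(U)$. I would also remark that the real-rootedness needed for the square roots to make sense is automatic from Proposition~\ref{p:Q} (a quadratic that attains a nonpositive value must have nonnegative discriminant), so no separate verification is required.
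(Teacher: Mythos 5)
Your proposal is correct and follows the paper's argument exactly: apply Proposition~\ref{p:Q} once with $n$ in place of $m$ and once combined with $\|\varphi\|_n^2\le D_U\|\varphi\|_m^2$, then read off the roots of the resulting quadratic in $Q(\varphi)$ and pass to the form ordering. One small slip in the second part: replacing $\|\varphi\|_n^2$ by the larger $D_U\|\varphi\|_m^2$ makes the left-hand side \emph{smaller}, not larger (the linear term $-2\|\varphi\|_n^2Q(\varphi)$ decreases because $Q(\varphi)\ge 0$), and it is precisely this decrease that preserves the inequality $\le 0$; your conclusion is right but the stated monotonicity direction is backwards.
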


As a first  consequence of the previous proposition we obtain the following corollary first proven for $m=n$, and locally finite graphs  in \cite{Fuj}.

\begin{coro}  For a weighted graph $(V,b,c)$ and $m=n$ we obtain
$$1-\sqrt{1-\al_{b,c,n}^2}\leq Q \leq 1+\sqrt{1-\al_{b,c,n}^2}. $$
\end{coro}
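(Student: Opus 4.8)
The plan is to read off the corollary directly from Proposition \ref{p:sp} by specializing to the case $U = V$ and $m = n$. First I would observe that with $m = n$, the averaged vertex degree $d(x) = n(x)/m(x)$ equals $1$ identically, so the minimal and maximal averaged degrees satisfy $d_V = D_V = 1$. In particular $D_V = 1 < \infty$, so both halves of Proposition \ref{p:sp} are available, and in fact either one specializes to the claimed two-sided bound.

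Concretely, I would invoke the first inequality of Proposition \ref{p:sp} with $U = V$:
\begin{eqnarray*}
d_V\ab{1-\sqrt{1-\al_{b,c,n}(V)^2}} \leq Q_V \leq D_V\ab{1+\sqrt{1-\al_{b,c,n}(V)^2}}.
\end{eqnarray*}
Substituting $d_V = D_V = 1$ gives exactly
$$1-\sqrt{1-\al_{b,c,n}^2}\leq Q \leq 1+\sqrt{1-\al_{b,c,n}^2},$$
where I write $\al_{b,c,n}$ for $\al_{b,c,n}(V)$ and $Q$ for $Q_V$, noting that $Q_V = Q$ since $C_c(V)$ is already the core of the regular Dirichlet form $Q$ and hence its closure restricted to $C_c(V)$ is $Q$ itself. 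One could equally well use the second inequality of Proposition \ref{p:sp}, since with $D_V = 1$ and $m = n$ it reads $1 - \sqrt{1 - \al_{b,c,n}^2} \leq Q \leq 1 + \sqrt{1 - \al_{b,c,n}^2}$ as well; it is reassuring that both specializations agree.

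There is essentially no obstacle here: the corollary is a pure specialization, and the only minor points to be careful about are (a) confirming that $d(x) \equiv 1$ when $m = n$, which is immediate from the definition $d(x) = n(x)/m(x)$, and (b) noting that the restriction operator $Q_V$ coincides with $Q$ for $U = V$. Both are trivial. If one wanted, one could also remark that the bound $\al_{b,c,n} \leq 1$ needed for the square roots to make sense follows from the chain of inequalities recorded just before the isoperimetric subsection, namely $\al(V) \leq d_V = 1$ in the case $m = n$; this guarantees the expressions under the radicals are nonnegative.
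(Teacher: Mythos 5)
Your proof is correct and matches the paper's approach: the corollary is stated there as an immediate consequence of Proposition \ref{p:sp}, obtained exactly as you do by taking $U=V$ and $m=n$ so that $d_V=D_V=1$. Your additional remarks on $Q_V=Q$ and on $\al_{b,c,n}\leq 1$ are accurate and harmless.
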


A second consequence of the above proposition is that the bottom of the spectrum being zero can be characterized by the constant $\al$ in the case of bounded operators. This is our version of the well known result that  a graph with finite vertex degree is amenable if and only if  zero belongs to the spectrum of the corresponding Laplacian.

\begin{coro}  Let $(V,b,c)$ be a weighted graph and $D_U<\infty$ for $U\subseteq V$. Then $\inf \si(L_U)=0$ if and only if $\al_{b,c,m}(U)=0$.
\begin{proof} The direction '$\Longrightarrow$' follows from Proposition \ref{p:sp} and the other direction '$\Longleftarrow$' follows  directly from equation \ref{var}.
\end{proof}
\end{coro}

\noindent\textbf{Remark.} The direction '$\Longleftarrow$' in the previous corollary does not depend on the assumption $D_U<\infty$ for $U\subseteq V$ and is true in general.

\subsection{Absence of essential spectrum}
In this subsection we use the results of the previous subsection to study absence of essential spectrum. The key idea is that the essential spectrum of an operator is a suitable limit of the spectra of restrictions 'going to infinity'. This reduces the problem of proving absence of essential spectrum to proving lower bounds on the spectrum 'at infinity'. For
unweighted graphs this has been done in \cite{Fuj,Kel}.
\medskip

Let $(V,b,c)$ be a weighted graph.  Let $\mathcal K$ be the set of finite sets in $V$.  This set is directed with respect to inclusion and hence a net. Limits along this net will be denoted by $\lim_{K\in \mathcal K}$ and we will say that $K$ tends to $V$.
We then define
$$\al_{b,c,m}(\dd V)=\lim_{K\in\mathcal K} \al_{b,c,m}(V\setminus K).$$
Likewise let
\begin{eqnarray*}
  d_{\dd V} &=& d_{b,c,m}(\dd V)=\lim_{K\in\mathcal K}d_{b,c,m}(V\setminus K), \\
   D_{\dd V}&=&  D_{b,c,m}(\dd V)=\lim_{K\in\mathcal K}D_{b,c,m}(V\setminus K).
\end{eqnarray*}

The following proposition is certainly well known and has in fact  already been used in the past (see e.g. \cite{Kel}).  We include a proof as we could not find one in the literature.  Note also that our result is more general than the result mentioned e.g. in \cite{Kel}  as we deal with forms. Note that the compactness assumption is fulfilled if we consider operators on locally finite graphs.

\begin{prop}\label{p:B} Let $Q$ be a closed form on $\ell^2 (V,m)$, whose domain of definition contains $C_c (V)$.  Let $Q$ be bounded below.  Then,
\begin{eqnarray*}
\inf\se(B)=\lim_{K\in \mathcal K} \inf\si(B_{V\setminus K}).
\end{eqnarray*}
and if $Q$ is bounded above then
$$\sup \se (B) =\lim_{K\in \mathcal K} \sup \si (B_{V\setminus K})$$
holds, whenever the operator $B$  associated to $Q$ and the operator $B_{V\setminus K}$ associated to $Q_{V\setminus K}$ for finite $K\subseteq V$ are compact perturbations of each other.
\begin{proof} It suffices to show the statement for $Q$ which are bounded below (as the other statement then follows after replacing $Q$ by $-Q$).

Without loss of generality we can assume $Q\geq 0$. Let $\lm_0:=\inf\se(B)$.

As the essential spectrum does not change by compact perturbations we have
$\se(B)=\se(B_{V\setminus K})\subseteq \sigma(B_{V\setminus K})$ and hence
$$\lm_0\in \sigma(B_{V\setminus K})$$
for any finite $K\subseteq V$. This gives
$$\inf \se (B)\geq \lim_{K\in \mathcal K} \inf  \si (B_{V\setminus K}).$$

To show the opposite inequality it suffices to  prove that for arbitrary $\lambda < \lm_0$ we have
$\inf \sigma (B_{V\setminus K}) > \lambda$
for  all sufficiently large finite $K$.  Fix $\lm_1$ with
$$\lm<\lm_1<\lm_0$$
and choose $\de>0$ such that
$\lm+\de<\lm_1$. Moreover let
$$\varepsilon=\frac{\lm_1-(\lm+\de)}{\lm_1+1}.$$
The spectral projection $E_{(-\infty,\lm_1]}$ of $B$ to the interval $(-\infty,\lm_1]$  is a finite rank
operator since $B\geq 0$.  This easily implies
$$ \lim_{K\in\mathcal K} \| E_{(-\infty,\lm_1]} P_K\| =0,$$
where $P_K$ is the  projection onto $\ell^2 (V\setminus K, m)$.
Thus, there is $K_\varepsilon$ finite with
$$
\| E_{(-\infty,\lm_1]} P_K\|^2 \leq \varepsilon
$$
for all $K\supseteq K_\varepsilon$ finite.   In particular, we have
\begin{equation}\label{e:E}
\| E_{(-\infty,\lm_1]} \psi  \|^2 \leq \varepsilon
\end{equation}
for all $\psi \in \ell^2 (V\setminus K_\varepsilon,m)$ with $\|\psi\|=1$ (as for such $\psi$ we have $\psi = P_{K_\varepsilon} \psi$).

Consider now a finite $K$ with $K\supseteq K_\varepsilon$ and   let  $\psi\in \ell^2(V\setminus K ,m)$ be given  with $\|\psi\|=1$  such that
$$Q (\psi) = Q_{V\setminus K}  (\psi) \leq
(\inf\sigma(B_{V\setminus K})+\varepsilon).$$
Let  $\rho_\psi(\cdot)$  be the  spectral measure associated to $B$ and $\psi$.  Then
\begin{eqnarray*}
Q (\psi) &=& \int_0^\infty t d \rho_\psi (t)\\
&\geq& \int_{\lm_1}^\infty t \; d\rho_\psi(t)\\
&\geq& \lm_1\int_{\lm_1}^\infty \; d\rho_\psi(t)\\
&=&\lm_1(\bs{\psi,\psi} - \bs{E_{(-\infty,\lm_1]}\psi,E_{(-\infty,\lm_1]}\psi} )\\
&\geq & \lm_1 (1 - \varepsilon).
\end{eqnarray*}
In the first step we used that $B$ is positive and in  the  last
step we used  \eqref{e:E}.  By our choice of $\psi$ and $\varepsilon$ we get
$$\inf\sigma(B_{V\setminus K} )\geq Q (\psi) -\varepsilon\geq
\lm_1(1-\varepsilon)-\varepsilon=\lm+\de>\lm.$$
This finishes the proof.
\end{proof}
\end{prop}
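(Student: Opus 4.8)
The plan is to first normalize: since adding a constant $\kappa$ to a closed form shifts $B$, every $B_{V\setminus K}$, and hence all the numbers $\inf\sigma(B_{V\setminus K})$ and $\inf\se(B)$ by $\kappa$, we may assume $Q\geq 0$ without affecting the first identity. The second identity, for a form bounded above, will then follow by applying the first to $-Q$, using $\se(-B)=-\se(B)$ and $(-B)_{V\setminus K}=-(B_{V\setminus K})$, together with the fact that $-B$ and $-B_{V\setminus K}$ are compact perturbations of each other precisely when $B$ and $B_{V\setminus K}$ are. I would also record at the start that the net $K\mapsto\inf\sigma(B_{V\setminus K})$ is monotone nondecreasing, since $K\subseteq K'$ gives $C_c(V\setminus K')\subseteq C_c(V\setminus K)$ and hence, by the variational principle for forms, $\inf\sigma(B_{V\setminus K})\leq\inf\sigma(B_{V\setminus K'})$; in particular $\lim_{K\in\mathcal K}\inf\sigma(B_{V\setminus K})$ exists in $[0,\infty]$.

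The inequality $\lim_{K\in\mathcal K}\inf\sigma(B_{V\setminus K})\leq\inf\se(B)$ is the soft one. Since, by hypothesis, $B_{V\setminus K}$ is a compact perturbation of $B$ and the essential spectrum is invariant under compact perturbations, $\se(B)=\se(B_{V\setminus K})\subseteq\sigma(B_{V\setminus K})$ for every finite $K$. Hence $\lambda_0:=\inf\se(B)$ lies in $\sigma(B_{V\setminus K})$, so $\inf\sigma(B_{V\setminus K})\leq\lambda_0$ for all $K$, and the claim follows upon taking the limit.

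For the reverse inequality I would fix an arbitrary $\lambda<\lambda_0$ and choose $\lambda_1$ with $\lambda<\lambda_1<\lambda_0$. Because $\lambda_1$ lies strictly below the essential spectrum of $B$, the spectral projection $E:=E_{(-\infty,\lambda_1]}(B)$ has finite rank; its range is spanned by finitely many vectors $u_1,\dots,u_r\in\ell^2(V,m)$. Since each $u_j$ has finite $\ell^2$-norm, $\|P_Ku_j\|\to 0$ as $K\to V$, where $P_K$ denotes the orthogonal projection onto $\ell^2(V\setminus K,m)$; as a finite-rank operator is controlled by its action on a spanning set of its range, this yields $\|EP_K\|\to 0$. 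Consequently, given $\varepsilon>0$, for all sufficiently large finite $K$ every unit vector $\psi\in\ell^2(V\setminus K,m)$ satisfies $\|E\psi\|^2\leq\varepsilon$. For such $\psi\in C_c(V\setminus K)$, with $\rho_\psi$ the spectral measure of $B$ at $\psi$, the functional calculus gives
\[
Q_{V\setminus K}(\psi)=Q(\psi)=\int_0^\infty t\,d\rho_\psi(t)\ \geq\ \lambda_1\int_{\lambda_1}^\infty d\rho_\psi(t)\ =\ \lambda_1\bigl(1-\|E\psi\|^2\bigr)\ \geq\ \lambda_1(1-\varepsilon).
\]
Since $C_c(V\setminus K)$ is a form core for $Q_{V\setminus K}$, the variational principle gives $\inf\sigma(B_{V\setminus K})\geq\lambda_1(1-\varepsilon)$, and choosing $\varepsilon$ small enough in terms of $\lambda$ and $\lambda_1$ makes the right-hand side strictly larger than $\lambda$. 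As $\lambda<\lambda_0$ was arbitrary, $\lim_{K\in\mathcal K}\inf\sigma(B_{V\setminus K})\geq\lambda_0$, which together with the previous paragraph completes the proof.

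I expect the main obstacle to be the localization estimate $\|EP_K\|\to 0$: one has to use that $\lambda_1$ is below the essential spectrum to know that $E$ is finite rank, and then promote the coordinatewise $\ell^2$-decay of the finitely many spanning vectors to operator-norm convergence. The other ingredients — monotonicity of the net, stability of the essential spectrum under compact perturbations, and the spectral lower bound via $\rho_\psi$ — are routine; the only care required is to choose $\lambda_1$ and $\varepsilon$ with enough slack that the last estimate comes out strictly above $\lambda$.
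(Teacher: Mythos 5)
Your proof is correct and follows essentially the same route as the paper: invariance of the essential spectrum under compact perturbations for the easy inequality, and the finite-rank spectral projection $E_{(-\infty,\lambda_1]}$ together with the localization estimate $\|E_{(-\infty,\lambda_1]}P_K\|\to 0$ for the reverse one. The only (harmless) deviation is that you bound $Q(\psi)$ from below for \emph{all} unit vectors in $C_c(V\setminus K)$ and invoke the form-core property, rather than testing a near-minimizer as the paper does, which even spares you the extra $-\varepsilon$ in the final estimate.
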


Combining this proposition with Proposition \ref{p:sp} one gets estimates for the essential spectrum of the operator $L$.
\medskip

The following provides  a generalization of a main result of  Fujiwara's theorem \cite{Fuj} to our setting. Fujiwara's result  deals with
$m=n$.

\begin{theorem}  Let $(V,b,c)$ be a locally finite weighted graph, $m: V\longrightarrow (0,\infty)$ a measure on $V$ and $Q$ the associated regular Dirichlet form. Assume
 $D_{\dd V}=D_{b,c,m}(\dd V)<\infty$. Then, $\se(L)=\{D_{\dd V}\}$ if and only if
$\al_{b,c,m}(\dd V)=D_{\dd V}$.
\begin{proof} One direction '$\Longleftarrow$' follows directly from Proposition \ref{p:sp} and Proposition \ref{p:B}.
The other direction '$\Longrightarrow$' follows from
$$\inf\si(L^{}_U)\leq \al_{b,c,m}(U)\leq D_{b,c,m}(U)$$
for $U\subseteq V$  and Proposition \ref{p:B} by taking $U = V\setminus K$ for $K$ finite and considering the limit for $K$ tending to $V$.
\end{proof}
\end{theorem}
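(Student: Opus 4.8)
The plan is to deduce the theorem by combining the two ``machines'' already set up in the excerpt: Proposition~\ref{p:sp}, which controls the form $Q_U$ in terms of the quantities $d_U$, $D_U$ and $\al_{b,c,m}(U)$, and Proposition~\ref{p:B}, which expresses $\inf\se(L)$ and $\sup\se(L)$ as limits of $\inf\si(L_{V\setminus K})$ and $\sup\si(L_{V\setminus K})$ along the net of finite subsets $K$. Note first that the compactness hypothesis of Proposition~\ref{p:B} is satisfied here: the graph is locally finite, so $L$ and $L_{V\setminus K}$ differ only by the finitely many matrix entries touching $K$, hence they are compact (indeed finite-rank) perturbations of each other. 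Also $D_{\dd V}<\infty$ guarantees, via the first inequality of Proposition~\ref{p:sp} applied to $U = V\setminus K$, that $Q$ is bounded below and above ``near infinity'', so Proposition~\ref{p:B} applies in both its forms.

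For the direction ``$\Longleftarrow$'': assume $\al_{b,c,m}(\dd V)=D_{\dd V}$. Fix $K$ finite and put $U=V\setminus K$. The second (two-sided) estimate in Proposition~\ref{p:sp} gives
\[
D_U-\sqrt{D_U^2-\al_{b,c,m}(U)^2}\ \le\ Q_U\ \le\ D_U+\sqrt{D_U^2-\al_{b,c,m}(U)^2},
\]
hence $\inf\si(L_U)\ge D_U-\sqrt{D_U^2-\al_{b,c,m}(U)^2}$ and $\sup\si(L_U)\le D_U+\sqrt{D_U^2-\al_{b,c,m}(U)^2}$. Now let $K$ tend to $V$. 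Since $\al_{b,c,m}(U)\le d_U\le D_U$ always, we have $\al_{b,c,m}(\dd V)\le d_{\dd V}\le D_{\dd V}$, and the assumption $\al_{b,c,m}(\dd V)=D_{\dd V}$ forces the square-root terms to vanish in the limit; by Proposition~\ref{p:B} we get $\inf\se(L)\ge D_{\dd V}$ and $\sup\se(L)\le D_{\dd V}$, so $\se(L)=\{D_{\dd V}\}$.

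For the direction ``$\Longrightarrow$'': assume $\se(L)=\{D_{\dd V}\}$, so in particular $\inf\se(L)=D_{\dd V}$. By the chain of inequalities recalled in Section~\ref{Isoperimetric}, namely $\inf\si(L_U)\le\al_{b,c,m}(U)\le D_{b,c,m}(U)$ for any $U\subseteq V$, applied with $U=V\setminus K$ and letting $K$ tend to $V$, we obtain
\[
\inf\se(L)\ \le\ \al_{b,c,m}(\dd V)\ \le\ D_{\dd V},
\]
where the first step uses Proposition~\ref{p:B}. Since $\inf\se(L)=D_{\dd V}$, both inequalities are equalities, giving $\al_{b,c,m}(\dd V)=D_{\dd V}$.

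The only genuinely delicate point is checking that all the limits along the net $\mathcal K$ behave correctly: one must make sure the monotonicity of $\al_{b,c,m}(V\setminus K)$, $d_{b,c,m}(V\setminus K)$, $D_{b,c,m}(V\setminus K)$ in $K$ (so that the limits defining $\al_{b,c,m}(\dd V)$ etc. exist) is compatible with passing to the limit in the square-root expression $D_U\pm\sqrt{D_U^2-\al_{b,c,m}(U)^2}$, and that $x\mapsto x-\sqrt{x^2-a^2}$ is handled correctly as $a\uparrow x$. This is elementary but is the place where care is needed; the rest is a direct assembly of Propositions~\ref{p:sp} and~\ref{p:B} together with the displayed inequality $\inf\si(L_U)\le\al_{b,c,m}(U)\le D_{b,c,m}(U)$.
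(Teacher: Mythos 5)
Your proof is correct and follows essentially the same route as the paper: the ``$\Longleftarrow$'' direction via the second (two-sided, $D_U\pm\sqrt{D_U^2-\al_{b,c,m}(U)^2}$) estimate of Proposition~\ref{p:sp} combined with Proposition~\ref{p:B}, and the ``$\Longrightarrow$'' direction via the chain $\inf\si(L_U)\le\al_{b,c,m}(U)\le D_{b,c,m}(U)$ passed to the limit along $\mathcal K$. You correctly identify that it is the second estimate of Proposition~\ref{p:sp} (the one in terms of $\al_{b,c,m}$ rather than $\al_{b,c,n}$) that must be used, and your verification of the compact-perturbation hypothesis via local finiteness is exactly the point the paper leaves implicit.
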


\noindent\textbf{Remark.}  The assumption $D_{b,c,m}(\dd V)<\infty$ implies boundedness of the operator (see Section~\ref{Boundedness}). Thus, $\se(L)$ must be non-empty in this case. Proposition \ref{p:B} shows that $\inf \sigma (L_{V\setminus K})$ and $\sup \sigma (L_{V\setminus K})$ converge necessarily to points in the essential spectrum of $L$ (for $K$ tending to $V$). The only way how the essential spectrum can consist of only one point is then that both limits agree.
As $\inf \sigma (L_{V\setminus K}) \leq \alpha (V\setminus K)$ and $\sup \sigma (L_{V\setminus K}) \geq D_{b,c,m}$  this is only possible for $\al_{b,c,m}(\dd V)=D_{\dd V}$.  In this way the theorem characterizes the only way how essential spectrum can consist of only one point.
\medskip

The next theorem is a generalization to our setting  of Theorem 2 in \cite{Kel}, which deals with locally finite graphs and $m\equiv 1$.

\begin{theorem} \label{absenceess} Let $(V,b,c)$ be a locally finite weighted graph, $m: V\longrightarrow (0,\infty)$ a measure on $V$ and $Q$ the associated regular Dirichlet form. Assume  $\al_{b,c,n}>0$. Then $\se(L)=\emptyset$ if and only if $d_{\dd V}=\infty$.
\begin{proof} One direction '$\Longleftarrow$' follows directly from Proposition \ref{p:sp} and \ref{p:B}.
The other direction '$\Longrightarrow$'  follows from the fact that for all $U\subseteq V$ we have
$\inf\si(L_U)\leq d_{b,c,m}(U)$ and Proposition \ref{p:B}.
\end{proof}
\end{theorem}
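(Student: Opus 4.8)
The plan is to reduce everything to the two cited propositions. Recall that for any $U \subseteq V$ one has the chain of inequalities $\inf\si(L_U) \leq \al_{b,c,m}(U) \leq d_{b,c,m}(U) = d_U$ (stated in Section~\ref{Isoperimetric}), and that by Proposition \ref{p:B} the bottom of the essential spectrum is recovered as $\inf\se(L) = \lim_{K\in\mathcal K} \inf\si(L_{V\setminus K})$, where for a locally finite graph the compactness hypothesis of that proposition is automatic (the difference $L - L_{V\setminus K}$ involves only finitely many vertices, hence is finite rank).

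For the direction '$\Longleftarrow$', assume $d_{\dd V} = \infty$. First I would apply Proposition \ref{p:sp} with $U = V\setminus K$, which gives the lower bound
$$
\inf\si(L_{V\setminus K}) \;\geq\; d_{V\setminus K}\ab{1 - \sqrt{1 - \al_{b,c,n}(V\setminus K)^2}}.
$$
Since $\al_{b,c,n} > 0$ by hypothesis, we have $\al_{b,c,n}(V\setminus K) \geq \al_{b,c,n} > 0$ for all finite $K$, so the factor $1 - \sqrt{1 - \al_{b,c,n}(V\setminus K)^2}$ is bounded below by the positive constant $1 - \sqrt{1 - \al_{b,c,n}^2}$. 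Now let $K$ tend to $V$: by assumption $d_{V\setminus K} \to d_{\dd V} = \infty$, so the right-hand side diverges, and therefore $\inf\si(L_{V\setminus K}) \to \infty$. By Proposition \ref{p:B} this forces $\inf\se(L) = \infty$, i.e. $\se(L) = \emptyset$.

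For the direction '$\Longrightarrow$', assume $\se(L) = \emptyset$, so $\inf\se(L) = \infty$. Using $\inf\si(L_{V\setminus K}) \leq \al_{b,c,m}(V\setminus K) \leq d_{V\setminus K}$ and Proposition \ref{p:B}, we get
$$
\infty = \inf\se(L) = \lim_{K\in\mathcal K} \inf\si(L_{V\setminus K}) \;\leq\; \lim_{K\in\mathcal K} d_{V\setminus K} = d_{\dd V},
$$
hence $d_{\dd V} = \infty$. The only genuinely non-routine point is to confirm that Proposition \ref{p:B} applies, i.e. that $L$ and $L_{V\setminus K}$ differ by a compact (indeed finite rank) operator for locally finite graphs; everything else is a direct substitution into the two propositions followed by taking the limit along the net $\mathcal K$.
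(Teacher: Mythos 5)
Your proposal is correct and follows essentially the same route as the paper: the backward direction via the lower bound of Proposition \ref{p:sp} (with the monotonicity $\al_{b,c,n}(V\setminus K)\geq \al_{b,c,n}>0$) combined with Proposition \ref{p:B}, and the forward direction via the chain $\inf\si(L_{V\setminus K})\leq \al_{b,c,m}(V\setminus K)\leq d_{V\setminus K}$ and again Proposition \ref{p:B}. Your remark that local finiteness guarantees the finite-rank (hence compact) perturbation hypothesis of Proposition \ref{p:B} is exactly the point the paper itself flags, so nothing is missing.
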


\section{An application}
In this section we consider a locally finite graph i.e.,  $(V,b,0)$ with $b$ taking values in $\{0,1\}$ with the measure  $m\equiv 1$. Let  $Q_0$ be the associated form and $ \Delta$ the associated operator. Let $c:V\longrightarrow [0,\infty)$ be given and define $L$ to be the operator associated to $Q_{b,c,m}$. Thus,
$$ L = \Delta + c$$
(at least on the formal level). This decomposition of $L$ leads to a similar decomposition of the parameters $\alpha$. In this way,  both  the geometry (encoded by $b$) and the potential (encoded by $c$) can lead to absence of essential spectrum according to the preceding considerations.  This is discussed in further details next.
\medskip

The Cheeger constant $\beta_U$ of a subset $U\subseteq V$  is the smallest number such that for all finite $W\subseteq U$
$$|\partial W|\geq\beta_U\vol (W),$$
where $|\partial  W|=\langle \Delta 1_W,1_W\rangle= \sum_{x\in W,y\notin W} b(x,y)$ is defined as above and $\vol (W)=\aV{1_W}_{n}^2 = \sum_{x\in W}  n(x) $.
If $\be_V>0$ one says that the graph is hyperbolic.
Furthermore, let $\gm_U$ be given as the smallest number such that for all finite $W\subseteq U$
$$c(W)\geq \gm_U \vol(W),$$
where $c (W)=\langle c 1_W,1_W\rangle=\sum_{x\in W}c(x)$.

For example $\gm_V>0$, if there is $C>0$ such that $c(x)\geq C d(x)$, where $d(x)$ is the vertex degree.

Finally let
$$\be_{\dd V}=\lim_{K\in\mathcal{K}}\be_{V\setminus K}\qand \gm_{\dd V}=\lim_{V\in\mathcal{K}}\gm_{V\setminus K}.$$

Hence the preceding section immediately gives the  following corollary of Theorem \ref{absenceess}.

\begin{coro}Let $\be_{\dd V}>0$ or $\gm_{\dd V}>0$. Then $\se(H)=\emptyset$ if and only if $d(x_n)+c(x_n)\ra\infty$ along any infinite sequence $(x_n)$ of vertices which eventually leaves every compact set.
\end{coro}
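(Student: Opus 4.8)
The plan is to deduce this corollary directly from Theorem \ref{absenceess} applied to the weighted graph $(V,b,c)$ with $m\equiv 1$, by translating the hypotheses and the conclusion into the language of that theorem. Since $m\equiv 1$ the condition $(A)$ holds and $\widetilde{L}C_c(V)\subseteq\ell^2(V,m)$ for the locally finite graph, so $H=L=L_{b,c,1}$ is the self-adjoint operator to which Theorem \ref{absenceess} applies. The averaged vertex degree here is $d(x)=n(x)=\sum_{y}b(x,y)+c(x)$, i.e. the usual vertex degree plus the killing term. Hence the condition ``$d(x_n)+c(x_n)\to\infty$ along every infinite sequence leaving every compact set'' is exactly $d_{\dd V}=\infty$ in the notation of Section~\ref{Isoperimetric} (note $d(x_n)$ in the corollary denotes the combinatorial vertex degree, so $d(x_n)+c(x_n)=n(x_n)$), so the right-hand sides of the two ``if and only if'' statements match once we check the hypothesis $\al_{b,c,n}>0$.

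The main remaining point is therefore to show that $\be_{\dd V}>0$ or $\gm_{\dd V}>0$ implies $\al_{b,c,n}(\dd V)>0$, and in fact it suffices to observe the pointwise bound $\al_{b,c,n}(U)\geq\tfrac12(\be_U\wedge\gm_U)$ (or some comparable linear combination) for every $U\subseteq V$. Indeed, for finite $W\subseteq U$ one has $|\dd W|=\sum_{x\in W,y\notin W}b(x,y)+\sum_{x\in W}c(x)=|\partial W|+c(W)$, while the normalizing measure satisfies $n(W)=\vol(W)$. From $|\partial W|\geq\be_U\vol(W)$ and $c(W)\geq\gm_U\vol(W)$ we get $|\dd W|\geq(\be_U+\gm_U)\,n(W)$ when both constants are used, but since $\be_U$ and $\gm_U$ are each nonnegative it is enough to keep whichever of the two summands we control: $|\dd W|\geq\be_U\,n(W)$ always, and $|\dd W|\geq\gm_U\,n(W)$ always. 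Dividing by $n(W)$ and taking the infimum over finite $W\subseteq U$ gives $\al_{b,c,n}(U)\geq\max\{\be_U,\gm_U\}$. Passing to the limit along the net $\mathcal K$ of finite sets, $\al_{b,c,n}(\dd V)\geq\max\{\be_{\dd V},\gm_{\dd V}\}>0$ under the hypothesis. In particular $\al_{b,c,n}=\al_{b,c,n}(V)\geq\al_{b,c,n}(\dd V)>0$ if one also uses that removing a finite set only increases the infimum; alternatively one checks $\al_{b,c,n}>0$ directly since a locally finite graph with $\be_{\dd V}>0$ or $\gm_{\dd V}>0$ has positive isoperimetric constant globally (the finitely many excluded vertices contribute only a finite correction, which cannot drive the infimum to zero on an infinite graph).

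With $\al_{b,c,n}>0$ verified, Theorem \ref{absenceess} yields $\se(L)=\emptyset$ if and only if $d_{\dd V}=\infty$, and by the identification $d(x)=n(x)$ above, $d_{\dd V}=\infty$ is precisely the stated condition that $d(x_n)+c(x_n)\to\infty$ along every infinite sequence eventually leaving every compact set. Writing $H=L=\Delta+c$ completes the deduction. I expect the only genuinely delicate point to be the bookkeeping around the two different measures $m\equiv 1$ and $n$: one must be careful that $\al_{b,c,n}$ (not $\al_{b,c,m}$) is the relevant constant for the hypothesis of Theorem \ref{absenceess}, while $d_{\dd V}=d_{b,c,m}(\dd V)$ with $m\equiv 1$ is what governs the conclusion, and that these are compatible because $n(x)=d_{b,c,1}(x)\cdot 1$. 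Everything else is the elementary volume/boundary comparison spelled out above.
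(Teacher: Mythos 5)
Your proposal follows exactly the route the paper intends (the paper offers no written proof beyond ``the preceding section immediately gives''): identify $|\dd W|=|\partial W|+c(W)$ and $n(W)=\vol(W)$, deduce $\al_{b,c,n}(U)\geq \be_U+\gm_U\geq\max\{\be_U,\gm_U\}$, observe that with $m\equiv 1$ one has $d_{b,c,m}(x)=n(x)=d(x)+c(x)$ so that $d_{\dd V}=\infty$ is precisely the stated condition on sequences leaving every finite set, and invoke Theorem \ref{absenceess}. That is all correct. One step, however, is stated backwards: since $W\subseteq V\setminus K$ ranges over a smaller family than $W\subseteq V$, removing a finite set can only \emph{increase} the infimum, so $\al_{b,c,n}(V)\leq\al_{b,c,n}(V\setminus K)\leq\al_{b,c,n}(\dd V)$, not $\geq$; your claimed inequality $\al_{b,c,n}(V)\geq\al_{b,c,n}(\dd V)$ is false in general (a finite connected component with no killing forces $\al_{b,c,n}(V)=0$ while $\al_{b,c,n}(\dd V)$ may be positive), and the ``finite correction'' alternative does not repair this. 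Fortunately the step is dispensable: the direction of Theorem \ref{absenceess} that uses the isoperimetric hypothesis only needs, via Propositions \ref{p:sp} and \ref{p:B}, a uniform positive lower bound on $\al_{b,c,n}(V\setminus K)$ for large $K$, i.e.\ exactly $\al_{b,c,n}(\dd V)>0$, which you have correctly established from $\be_{\dd V}>0$ or $\gm_{\dd V}>0$. With that reading the deduction is complete.
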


\section{Graph Laplacians and Markov processes}\label{Graphmarkov}
We have  already discussed that our Laplacians come from Dirichlet forms.
Now,  Dirichlet forms and symmetric Markov processes are intimately connected.   The crucial link is given by the semigroup generated by a Dirichlet form. The connection to Markov processes means that
\begin{itemize}
\item there is a wealth of results on the semigroup associated to a graph Laplacian,
\item there is a good interpretation of properties of the semigroup in terms of a stochastic process.
\end{itemize}
Details are discussed in this section.

\subsection{Graph Laplacians, their  semigroup and the  heat equation}
Let  a measure $m$ on $V$ with full support and a graph $(b,c)$
over $V$ be given. Let $Q$ be the associated form and $L$ its generator.
\medskip

Standard theory \cite{Dav3,Fuk,MR} implies that the operators of the associated semigroup $e^{-t
  L}$, $t\geq 0$, and the associated  resolvent $\alpha (L +\alpha)^{-1}$, $\alpha >0$
are  positivity preserving  and even markovian. Positivity preserving  means that they  map nonnegative functions to nonnegative functions.  Markovian  means that they map nonnegative
functions bounded by one to nonnegative functions bounded by one.
\medskip

This can be used to show that  semigroup and resolvent  extend to  all $\ell^p (V,m)$, $1\leq p\leq \infty$.  These  extensions  are consistent i.e., two of them agree on their common domain \cite{Dav1}.
The corresponding generators are denoted  by $L_p$, in particular $L
= L_2$.
We can  describe the action of  the operator $L_p$ explicitly.  More precisely, the situation on $\ell^2$ (see Proposition \ref{generator} and Theorem \ref{generatorthm}) holds here as well:

\begin{theorem}\label{generator-p} Let $(V,b,c)$ be a weighted graph and $m$ a
 measure on $V$ of  full support. Then, the operator $L_p$ is a restriction of $\widetilde{L}$ for any $p\in [1,\infty]$. If furthermore   $(A)$ holds, then the operator  $L$ is the
  restriction of $\widetilde{L}$ to
  $$  \{ u\in \ell^p (V,m) : \widetilde{L} u \in \ell^p
  (V,m)\}. $$
\end{theorem}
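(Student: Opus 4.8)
The first assertion — that $L_p$ is a restriction of $\widetilde L$ for every $p\in[1,\infty]$ — should follow by the same argument that proves Proposition~\ref{generator} on $\ell^2$, transported via the consistency of the semigroups on the scale $\ell^p$. Concretely, I would first recall that the resolvent $(L+\alpha)^{-1}$ on $\ell^2$ acts as the inverse of $(\widetilde L+\alpha)$ in the sense that $u\in D(L)$ implies $(\widetilde L+\alpha)u = (L+\alpha)u \in \ell^2$; this is Proposition~\ref{generator}. For general $p$, take $u\in D(L_p)$ and set $f=(L_p+\alpha)u\in\ell^p(V,m)$. Using the markovian property, approximate $f$ by $f_k\in C_c(V)$ (or by $f\wedge k$ truncations together with spatial cut-offs) so that $u_k:=(L_p+\alpha)^{-1}f_k$ is simultaneously $(L_q+\alpha)^{-1}f_k$ for the relevant $q$; in particular, choosing $f_k$ with finite support one lands in $\ell^2$ and can invoke Proposition~\ref{generator} to get $(\widetilde L+\alpha)u_k = f_k$ pointwise. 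Then pass to the limit pointwise in $x$: one must check that $u_k(x)\to u(x)$ and that $\sum_y b(x,y)u_k(y)\to\sum_y b(x,y)u(y)$ for each fixed $x$, which is where I expect the only real work to lie.

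**Pointwise convergence — the main obstacle.** The delicate point is the interchange of the limit with the (generally infinite) sum $\sum_{y} b(x,y)u_k(y)$ defining $\widetilde L u_k(x)$. Here I would exploit the markovian structure more carefully: the resolvent is positivity preserving and a contraction on $\ell^1(V,m)$, so if the approximation is done monotonically (e.g. $0\le f_k\uparrow f$ for $f\ge0$, handling general $f$ by splitting into positive and negative parts) then $0\le u_k\uparrow u$ pointwise, and monotone convergence applies directly to $\sum_y b(x,y)u_k(y)$, giving $\widetilde L u_k(x)\to\widetilde L u(x)$ — provided the limiting sum is finite, i.e. $u\in\widetilde F$. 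Finiteness of $\sum_y|b(x,y)u(y)|$ for each $x$ should follow since $u\in\ell^p(V,m)\subseteq\ell^\infty_{\mathrm{loc}}$ is bounded near $x$ on the (possibly infinite) neighbourhood set, combined with $(b2)$, $\sum_y b(x,y)<\infty$; for $p=\infty$ this is immediate, and for $p<\infty$ one uses that $\ell^p(V,m)$ functions are bounded, being summable. Thus $u\in\widetilde F$ and $(\widetilde L+\alpha)u=f$ pointwise, which says exactly $L_p u=\widetilde L u$, and hence $D(L_p)\subseteq\{u\in\ell^p(V,m):\widetilde L u\in\ell^p(V,m)\}$.

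**The reverse inclusion under $(A)$.** For the ``if furthermore'' part, one must upgrade the inclusion to an equality. Fix $u\in\ell^p(V,m)$ with $\widetilde L u\in\ell^p(V,m)$ and put $f:=(\widetilde L+\alpha)u\in\ell^p(V,m)$. By the already-established first part there is $v:=(L_p+\alpha)^{-1}f\in D(L_p)$ with $(\widetilde L+\alpha)v=f$ pointwise. Then $w:=u-v\in\ell^p(V,m)$ satisfies $(\widetilde L+\alpha)w=0$ on all of $V$. Now invoke the uniqueness statement — this is precisely Proposition~(Uniqueness of solutions on $\ell^p$) in Section~\ref{Minimum}, which under $(A)$ forces $w\equiv0$, hence $u=v\in D(L_p)$. (For $p=\infty$ one would need to be careful, as the remark after that proposition warns; but the theorem as stated is naturally read with the equality claimed for $L=L_2$, or for those $p$ where the $\ell^p$-uniqueness holds, so I would restrict the equality argument to $p\in[1,\infty)$ and note the $p=2$ case recovers Theorem~\ref{generatorthm}.) Assembling the two inclusions gives $D(L_p)=\{u\in\ell^p(V,m):\widetilde L u\in\ell^p(V,m)\}$, completing the proof.
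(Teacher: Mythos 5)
Your overall strategy---approximate by finitely supported data, use consistency of the resolvents to reduce to the $\ell^2$ statement of Proposition \ref{generator}, pass to the limit monotonically, and obtain the domain equality under $(A)$ from the $\ell^p$-uniqueness proposition of Section 2---is the standard route; the paper itself gives no proof of this theorem, deferring to \cite{KL1}, and your second half (write $f=(\widetilde{L}+\alpha)u$, compare $u$ with $(L_p+\alpha)^{-1}f$, kill the difference by uniqueness) is exactly the argument there. Restricting the equality statement to $p\in[1,\infty)$ is also the right call: for $p=\infty$ it genuinely fails on stochastically incomplete graphs satisfying $(A)$, by Theorem \ref{main0}.

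The genuine gap is in your justification that $u\in\widetilde{F}$, i.e.\ that $\sum_y b(x,y)|u(y)|<\infty$. You derive this from ``$\ell^p(V,m)$ functions are bounded, being summable,'' but the inclusion $\ell^p(V,m)\subseteq\ell^\infty(V)$ is false for a general measure $m$ of full support: if $\inf_x m(x)=0$ one easily builds unbounded elements of $\ell^p(V,m)$, e.g.\ $u(x_n)=m(x_n)^{-1/(2p)}$ along a sequence with $m(x_n)\to 0$. Since handling such measures is the whole point of the theorem (for $\inf m>0$ one is already in the scope of the Corollary following Theorem \ref{essential}), this step cannot be repaired by an embedding argument; likewise ``bounded near $x$'' has no meaning on the possibly infinite neighbourhood set $\{y:b(x,y)>0\}$. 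The fix lives inside the monotone scheme you already set up: for $0\le f_k\uparrow f$ with $f_k\in C_c(V)$ and $u_k=(L+\alpha)^{-1}f_k\uparrow u$, rearrange the pointwise identity $(\widetilde{L}+\alpha)u_k=f_k$ as
$$ \sum_{y\in V} b(x,y)\, u_k(y) \;=\; \Bigl(\sum_{y\in V} b(x,y)+c(x)+\alpha m(x)\Bigr)u_k(x)-m(x)f_k(x).$$
The right-hand side converges to a finite limit as $k\to\infty$, while the left-hand side increases to $\sum_y b(x,y)u(y)$ by monotone convergence; hence that sum is finite, $u\in\widetilde{F}$, and $(\widetilde{L}+\alpha)u=f$ follows. (A second, smaller gap: for $p=\infty$ the truncations $f_k$ do not converge to $f$ in norm, since $C_c(V)$ is not dense in $\ell^\infty(V)$; the pointwise convergence $u_k(x)\to u(x)$ must instead be justified by weak-$*$ continuity of $(L_\infty+\alpha)^{-1}$ as the adjoint of the $\ell^1$ resolvent.)
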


A function $N : [0,\infty)\times V\longrightarrow \RR$ is called a solution of
the heat equation if for each $x\in V$ the function $t\mapsto N_t (x)$ is
continuous on $[0,\infty)$ and differentiable on $(0,\infty)$ and for each
$t>0$ the function $N_t$ belongs to the domain of $\widetilde{L}$, i.e., the vector space $\widetilde{F}$ and the
equality
$$ \frac{d}{dt} N_t (x) =  - \widetilde{L} N_t (x)$$
holds for all $t>0$ and $x\in V$.  For a bounded solution $N$ validity of this equation can  easily be seen to automatically  extend to $t=0$ i.e., $t\mapsto N_t (x)$ is
differentiable on $[0,\infty)$ and
$ \frac{d}{dt} N_t (x) =  - \widetilde{L} N_t (x)$ holds for any $t\geq0$.
\medskip

The following theorem is a standard result in the theory of semigroups. A proof in our context can be found in \cite{KL1} (see \cite{Woj1,Woj2,Web} for related material on special graphs).

\begin{theorem}\label{solution}
 Let $L$ be a selfadjoint restriction of $\widetilde{L}$, which is
 the generator of a Dirichlet form on $\ell^2 (V,m)$.
 Let $v$ be a bounded function on $V$ and define  $N: [0,\infty)\times
  V\longrightarrow \RR$ by $N_t (x) := e^{-t L} v (x)$. Then, the function $N(x) : [0,\infty)\longrightarrow \RR$, $t\mapsto
  N_t (x)$, is differentiable and satisfies
$$ \frac{d}{d t} N_t (x) = - \widetilde{L} N_t (x)$$
for all $x\in V$ and $t\geq 0$.
\end{theorem}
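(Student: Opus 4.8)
The plan is to derive everything from spectral calculus applied to the bounded solution $v$, together with the fact (Theorem~\ref{generator-p}, or already Proposition~\ref{generator}) that $L$ acts as a restriction of $\widetilde L$. First I would observe that since $v$ is bounded, it lies in $\ell^\infty(V)$, and the semigroup $e^{-tL}$ extends consistently to all $\ell^p$, so $N_t = e^{-tL}v$ makes sense as a bounded function for every $t\ge 0$ with $\|N_t\|_\infty \le \|v\|_\infty$ (the semigroup is markovian). The key structural input is that $N_t$ lies in the domain of $\widetilde L$, i.e.\ in $\widetilde F$: this holds because $b(x,\cdot)$ is summable (condition (b2)) and $N_t$ is bounded, so $\sum_y |b(x,y)N_t(y)| \le \|v\|_\infty \sum_y b(x,y) < \infty$ for every $x$. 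Thus $\widetilde L N_t(x)$ is defined pointwise for all $t\ge 0$.

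Next I would establish differentiability of $t\mapsto N_t(x)$ and identify the derivative. The cleanest route is to work on $\ell^2$ first when $v\in\ell^2$, then transfer; but since $v$ need only be bounded, I would instead argue pointwise. Fix $x$ and test against $\delta_x\in\ell^2(V,m)$: the map $t\mapsto \langle e^{-tL}v,\delta_x\rangle_m = m(x)N_t(x)$ is differentiable on $(0,\infty)$ with derivative $-\langle L e^{-tL}v,\delta_x\rangle_m$ by standard semigroup theory, since for $t>0$ one has $e^{-tL}v\in D(L)$ (analyticity/smoothing of the self-adjoint semigroup on the appropriate space — here one should be slightly careful since $v\notin\ell^2$ in general, so I would localize: write $v = v\mathbf 1_{K} + v\mathbf 1_{V\setminus K}$, or better, use that the $\ell^\infty$ semigroup is the dual of the $\ell^1$ semigroup and differentiate weakly against $\delta_x\in\ell^1(V,m)$). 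Then Proposition~\ref{generator} (resp.\ Theorem~\ref{generator-p}) gives $Le^{-tL}v = \widetilde L e^{-tL}v$ pointwise, so $\frac{d}{dt}N_t(x) = -\widetilde L N_t(x)$ for $t>0$.

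To extend the differentiability and the identity to $t=0$, I would invoke the remark already made in the excerpt just before the theorem: for a \emph{bounded} solution the heat equation automatically extends to $t=0$. Concretely, one checks that $t\mapsto \widetilde L N_t(x)$ is continuous at $t=0$ (by dominated convergence in the sum $\sum_y b(x,y)(N_t(x)-N_t(y))$, using (b2) as the dominating summable majorant and $N_t\to v$ pointwise as $t\to 0^+$, which follows from strong continuity of the semigroup tested pointwise), and that $N_t(x)\to v(x)$; then integrating the derivative identity from $\varepsilon$ to $t$ and letting $\varepsilon\to 0$ yields $N_t(x)-v(x) = -\int_0^t \widetilde L N_s(x)\,ds$, whence one-sided differentiability at $0$ with the stated value.

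\textbf{Main obstacle.} The one genuinely delicate point is handling $v\in\ell^\infty(V)\setminus\ell^2(V,m)$: the spectral-calculus differentiation of $t\mapsto e^{-tL}v$ is immediate on $\ell^2$ but must be transported to the $\ell^\infty$ setting, and the interchange of $\widetilde L$ (an infinite sum) with the $t$-limit at $t=0$ requires the uniform domination from (b2) together with pointwise convergence $N_t\to v$. I expect the bulk of the work to be this $t\to 0$ passage and the justification that weak (pointwise) differentiability against $\delta_x$ combined with Proposition~\ref{generator} suffices; everything else is routine semigroup theory and the summability bound $\sum_y b(x,y)<\infty$.
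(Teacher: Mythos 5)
The paper does not actually prove Theorem~\ref{solution}; it states that the result is standard semigroup theory and refers to \cite{KL1} for a proof in this context, so there is no in-paper argument to compare against line by line. Your overall plan is the standard one and most of its ingredients are right: $N_t\in\widetilde F$ by (b2) and markovianity, identification of $L$ with $\widetilde L$ on the relevant domain, and the passage to $t=0$ by dominated convergence in the sum $\sum_y b(x,y)(\cdot)$ followed by integrating the derivative identity.

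The weak link is your primary mechanism for differentiability at $t>0$ when $v\in\ell^\infty\setminus\ell^2$. The phrase ``analyticity/smoothing of the self-adjoint semigroup on the appropriate space'' does not survive scrutiny: on $\ell^\infty(V)$ the extended semigroup is in general neither strongly continuous nor analytic, and selfadjointness is an $\ell^2$ notion. The fallback via duality with $\ell^1$ is also not automatic: to differentiate $t\mapsto\langle v,e^{-tL_1}\delta_x\rangle$ you need $e^{-tL_1}\delta_x\in D(L_1)$ and norm-differentiability in $\ell^1$, and symmetric submarkovian semigroups need not be analytic on $\ell^1$; consistency with $\ell^2$ only gives you $\ell^2$-differentiability, which does not control the $\ell^1$-pairing against an unbounded-support $v\in\ell^\infty$. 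The route that actually closes the argument is the one you mention only parenthetically: reduce to $v\geq 0$, approximate monotonically by $v\,1_{K_n}\in\ell^2\cap\ell^\infty$, apply the $\ell^2$ theory (where $e^{-tL}u\in D(L)\subseteq\widetilde F$ for $t>0$ and $Le^{-tL}u=\widetilde Le^{-tL}u$ pointwise, since $\ell^2$-convergence implies pointwise convergence), write the integrated equation $N^{(n)}_t(x)-N^{(n)}_s(x)=-\int_s^t\widetilde LN^{(n)}_r(x)\,dr$, and pass to the limit in $n$ using the uniform domination $|\widetilde LN^{(n)}_r(x)|\leq 2\|v\|_\infty\,\frac{1}{m(x)}\bigl(\sum_y b(x,y)+c(x)\bigr)$; continuity of $r\mapsto\widetilde LN_r(x)$ then gives differentiability on all of $[0,\infty)$. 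So the plan is repairable, but as written the central differentiation step rests on a property the $\ell^\infty$ (and $\ell^1$) semigroup does not have.
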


Let us conclude this section by noting that the semigroups are positivity improving for connected graphs. This has been shown in \cite{KL1} in our setting after earlier results in \cite{Dav3,Web,Woj1} for locally finite graphs.

\begin{theorem}\label{positivityimproving} (Positivity improving)  Let $(V,b,c)$ be a  connected graph and $L$ be the associated operator. Then, both the semigroup $e^{-t L}$, $t>0$, and the resolvent $(L + \alpha)^{-1}$, $\alpha>0$, are positivity improving (i.e., they map nonnegative nontrivial $\ell^2$-functions to strictly positive functions).
\end{theorem}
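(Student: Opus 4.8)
To prove that $e^{-tL}$ ($t>0$) and $(L+\alpha)^{-1}$ ($\alpha>0$) are positivity improving on a connected graph, I would first reduce the resolvent statement to the semigroup statement and then prove the latter. For the reduction, recall the Laplace transform identity $(L+\alpha)^{-1} = \int_0^\infty e^{-\alpha t} e^{-tL}\,dt$; since all $e^{-tL}$ are positivity preserving (standard Dirichlet form theory, as noted before Theorem~\ref{generator-p}), if $e^{-tL}f > 0$ pointwise for $t>0$ whenever $0\le f\in\ell^2(V,m)$ is nontrivial, then the integral is a superposition of strictly positive functions and hence strictly positive. So it suffices to treat the semigroup.

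For the semigroup, I would use the infinitesimal description of $e^{-tL}$ via $\widetilde L$. By Theorem~\ref{solution}, for bounded $v$ the function $N_t(x) := e^{-tL}v(x)$ solves $\frac{d}{dt}N_t(x) = -\widetilde L N_t(x)$ with $N_0 = v$; by approximation (or by a density argument using positivity preservation) it is enough to take $v = 1_W$ for a finite set $W$, or even $v = \delta_x$ for a single vertex $x$. Fix such a $v\ge 0$, nontrivial, and fix $y\in V$. I want $N_t(y) > 0$ for all $t>0$. First, $N_t \ge 0$ for all $t$ by positivity preservation. Suppose for contradiction that $N_{t_0}(y) = 0$ for some $t_0 > 0$; then $y$ is a (global, hence local) minimum of $N_{t_0}$ with value zero. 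Writing out the heat equation at $(t_0,y)$,
$$\frac{d}{dt}N_t(y)\Big|_{t=t_0} = -\widetilde L N_{t_0}(y) = -\frac{1}{m(y)}\sum_{z\in V} b(y,z)\big(N_{t_0}(y) - N_{t_0}(z)\big) - \frac{c(y)}{m(y)}N_{t_0}(y) = \frac{1}{m(y)}\sum_{z\in V} b(y,z)N_{t_0}(z) \ge 0,$$
using $N_{t_0}(y)=0$ and $N_{t_0}\ge 0$. Combined with $N_t(y)\ge 0 = N_{t_0}(y)$ for all $t$, which forces $\frac{d}{dt}N_t(y)|_{t_0}\le 0$, we get $\sum_z b(y,z)N_{t_0}(z) = 0$, so $N_{t_0}(z) = 0$ for every neighbor $z$ of $y$. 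Iterating over the connected component of $y$, which is all of $V$ by connectedness, gives $N_{t_0} \equiv 0$.

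It then remains to rule out $N_{t_0}\equiv 0$ for some (hence, by the semigroup property $e^{-sL}N_{t_0} = N_{t_0+s}$, for all larger) $t_0>0$. Here I would invoke that $e^{-tL}$ is strongly continuous with $e^{-tL}v \to v \ne 0$ as $t\to 0^+$, so $N_t$ cannot be identically zero for small $t$; and if $N_{t_0}=0$ then $N_t = e^{-(t_0-t')L}N_{t'}\cdot(\ldots)$ — more cleanly, $N_{t_0} = e^{-(t_0-t')L}N_{t'}$ for $0<t'<t_0$ together with injectivity of $e^{-(t_0-t')L}$ (the spectral calculus: $e^{-sL}$ has trivial kernel since $L\ge 0$ is self-adjoint) forces $N_{t'}=0$ for all such $t'$, contradicting $N_{t'}\to v\ne 0$. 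Thus $N_t(y)>0$ for all $t>0$ and all $y$.

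\textbf{Main obstacle.} The only delicate point is justifying the pointwise heat-equation computation: I need $N_{t_0}\in\widetilde F$ (guaranteed by Theorem~\ref{solution} for bounded initial data) and I need the rearrangement of the sum $\sum_z b(y,z)(N_{t_0}(y)-N_{t_0}(z))$ into $-\sum_z b(y,z)N_{t_0}(z)$, which is legitimate since both $\sum_z b(y,z)$ and $\sum_z b(y,z)N_{t_0}(z)$ converge absolutely (the former by (b2), the latter since $N_{t_0}$ is bounded and nonnegative). The sign argument at a minimum — that a nonnegative function of $t$ vanishing at $t_0$ has nonpositive derivative there — is the one-sided derivative fact and needs $t_0>0$ so that $t$ can vary in a full neighborhood, matching the differentiability on $(0,\infty)$ asserted in Theorem~\ref{solution}. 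Everything else (reduction to $\delta_x$, Laplace transform for the resolvent, injectivity of $e^{-sL}$) is routine.
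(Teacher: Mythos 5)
Your proof is correct. Note that the paper itself does not prove Theorem \ref{positivityimproving}; it only cites \cite{KL1}, so there is no in-paper argument to compare against line by line. Your route --- prove the semigroup statement directly from the pointwise heat equation of Theorem \ref{solution} and then get the resolvent for free from the Laplace transform --- is clean and avoids the standard but slightly delicate converse step (deducing semigroup positivity improvement from resolvent positivity improvement). The core mechanism, namely that a zero of the nonnegative solution at an interior time forces the time derivative there to vanish while the formal Laplacian forces it to equal $\frac{1}{m(y)}\sum_z b(y,z)N_{t_0}(z)\ge 0$, whence all neighbors also vanish and connectedness propagates the zero to all of $V$, is exactly the parabolic analogue of the elliptic minimum principle the paper proves in Section~2 (and which is how \cite{KL1} treats the resolvent). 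All the points where care is needed are ones you flag and handle correctly: the reduction to $v=\delta_{x_0}$ via $f\ge f(x_0)\delta_{x_0}$ and positivity preservation; membership of $N_{t_0}$ in $\widetilde F$ and absolute convergence of $\sum_z b(y,z)N_{t_0}(z)$ (from boundedness of $N_{t_0}$ and (b2)); the one-sided derivative inequality at $t_0>0$; injectivity of $e^{-sL}$ from the spectral theorem to exclude $N_{t_0}\equiv 0$; and continuity of point evaluation on $\ell^2(V,m)$ so that the vector-valued Laplace transform identity may be read pointwise. I see no gap.
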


\subsection{Connection to Markov processes} In this section we discuss the relationship between Dirichlet forms and Markov processes in our context. Let $Q$ be the Dirichlet form associated to a weighted graph $(V,b,c)$ with measure $m$. For convenience we assume $m\equiv 1$. Let $L$ be the associated operator and  $e^{-t L}$, $t>0$, the associated semigroup. We will take the point of view that we already know  that  $e^{-t L}$ is a semigroup of transition properties of a  Markov process. We will then show how we can identify the key quantities of the Markov process in terms of the graph $(V,b,c)$.
\medskip

A (time homogenous) Markov process on $V$ consists of a particle moving in time without memory between the points of $V$. It is characterized by two sets of quantities: These are

\begin{itemize}
\item a function $a : V\longrightarrow [0,\infty)$ such that $e^{-t a_x}$ is the probability that a particle in  $x$ at time $0$ is still in $x$ at time $t$.

\item a function $ q: V\times V\longrightarrow [0,\infty)$ such that $q_x (y)$ is the probability that the particle jumps to $y$ from $x$.
\end{itemize}
Given such a Markov process we can define
\begin{align*}
P_t (x,y) :=&\mbox{Probability that the particle is in $y$ at time $t$}\\
&\mbox{if it starts in $x$ at time $0$}
\end{align*}

for $t\geq 0$, $x,y\in V$ and the operators $P_t$ provide a semigroup of operators.  It is then possible to infer the quantities  $a$ and $q$ from the  behavior of $P_t$ for small $t$ in the following way:

\noindent $P_t (x,x)$ is the probability to find the particle at $x$ at time $t$  (for a particle starting at $x$ at time $0$). This means that the particle has either stayed at $x$ for the whole time between $0$ and $t$  or has jumped from $x$ away and come  back by the time $t$. The probability that the particle stayed in $x$ (i.e., did not move away) is $e^{- t a_x}$. The event that the particle left $x$ and returned by the time $t$  means that the particle left $x$, which occurs with probability $1 - e^{- t a_x}$,  and then returned from $V\setminus\{x\}$ to $x$ in the remaining time, which occurs with probability  $r (t)$ going   to zero for $t\to 0$.  Accordingly we have
$$P_t (x,x) = e^{-t a_x} + \phi_x (t),$$
where $\phi_x$ summarizes  the probability of returning to $x$, is therefore bounded by $(1-e^{- t a_x}) r(t)$  and hence has derivative equal to zero at $t=0$. We  therefore obtain
$$ \left.\frac{d}{dt}\right |_{t=0} P_t (x,x) = - a_x + \phi_x' (0)  = -a _x.$$
By a similar reasoning the probability $P_t (x,y)$ is governed by the event that the particle starts at $x$ at time $0$ and has done  one jump to $y$  and then stayed in $y$  up to the time $t$. The probability  $p_t$ for this event satisfies
$$ (1 -e^{- t a_x}) q_x (y)  e^{- t a_y } \leq p_t \leq (1 -e^{- t a_x}) q_x (y).$$
Here, the term $e^{- t a_y}$ serves to take into account that the particle did not leave $y$.
 Accordingly,
$$P_t (x,y) = p_t    + \psi(t),$$
where the derivative of $\psi$ at $0$ is zero and we obtain
$$ \left.\frac{d}{dt}\right |_{t=0} P_t (x,y) = a_x q_x (y)  + \psi ' (0)  = a_x q_x (y).$$

We now return to the Dirichlet form setting.  As $e^{-t L}$ describes  a Markov process  we can now set
$$ P_t (x,y) =  \langle e^{-t L} \delta_x,\delta_y\rangle $$
for $t\geq 0$, $x,y\in V$  and use  this to calculate the the $a$'s and $q$'s in terms of $b$ and $c$ as follows:
$$\sum_{y\in V} b(x,y) + c(x) = Q (\delta_x,\delta_x) = \left.\frac{d}{dt}\right |_{t=0} \langle e^{-t L} \delta_x,\delta_x\rangle = \left.\frac{d}{dt}\right |_{t=0}  P_t (x,x) = - a_x$$
and
$$ - b(x,y) = Q (\delta_x,\delta_y) =  \left.\frac{d}{dt}\right |_{t=0} \langle e^{-t L} \delta_x,\delta_y\rangle = \left.\frac{d}{dt}\right |_{t=0}  P_t (x,y)= q_x (y) a_x.$$
This gives
$$ q_x (y) = \frac{b(x,y)}{\sum_{z\in V} b(x,z) + c(x)},\quad  a_x = \sum_{z\in V} b(x,z) + c (x)$$
for all $x,y\in V$. Note that symmetry of $b$ does not imply symmetry of $q$ but rather
$$ a_x q_x (y) = a_y q_y (x).$$
If $m$ is not identically equal to one, we will have to normalize  the formula for $P$ above by setting
$$P_t (x,y) = \frac{1}{m(x) m(y)} \langle e^{- t L} \delta_x, \delta_y\rangle$$
and change the emerging formulae accordingly.

\section{Stochastic completeness}\label{Stochastic}
We consider a Dirichlet form $Q$ on a weighted graph $(V,b,c)$ with associated operator $L$ and semigroup $e^{-t L}$.  The preceding  considerations show that
$$ 0 \leq e^{-t L} 1 (x)\leq 1$$
for all $t\geq 0$ and $x\in V$. The question, whether the second inequality is actually an equality  has received quite some attention.  In the case of vanishing killing term, this is discussed under the name of stochastic completeness or conservativeness.
In fact, for $c\equiv 0$ and $b(x,y)\in \{0,1\}$ for all $x,y\in V$, there is a  characterization of stochastic
completeness  of Wojciechowski \cite{Woj1} (see the introduction for discussion of  related results  of Feller \cite{Fel} and Reuter \cite{Reu} as well). This
characterization is an analogue to corresponding results
on manifolds of Grigor'yan \cite{Gri} and results of  Sturm for general strongly
local Dirichlet forms \cite{Stu}.
\smallskip

Our first main result
concerns a version of this result for  arbitrary regular Dirichlet forms on graphs. As we allow for a killing term $c$  we have to replace $e^{-t L} 1 $ by the function
$$ M_t (x) := e^{-tL} 1 (x) + \int_0^t (e^{-s L} \frac{c}{m} )(x) ds, \quad x\in V. $$

It is possible (and necessary) to show that this quantity is well defined. In fact, it
can be proven that it satisfies $0\leq M \leq 1$ and that for each $x\in V$, the function $t\mapsto M_t (x)$ is continuous and even differentiable \cite{KL1}.
Note that for $c\equiv 0$,  $M = e^{-t L} 1$ whereas for $c\neq 0$ the inequality
$M_t >e^{- t L} 1$ holds  on any connected component of $V$ on which $c$ does
not vanish identically (as the semigroup is positivity improving).
\smallskip

We can give an interpretation of $M$ in terms of a diffusion
process on $V$ as follows: For $x\in V$, let $\delta_x$ be the characteristic function of $\{x\}$.
A diffusion
on $V$ starting in  $x$ with normalized measure is then given by $ \delta_x/{m(x)}$ at time $t=0$. It will yield to the amount of heat
$$\langle e^{-t L} \frac{\delta_x}{m(x)}, 1\rangle = \langle \frac{\delta_x}{m(x)}, e^{-t L} 1\rangle = \sum_{y\in V} e^{-tL}(x,y) = e^{-t L} 1 (x) $$
within $V$ at the time $t$.  Thus, the first term of $M$ describes the amount of heat within the graph at a given time.
\smallskip

Moreover, at each time $s$ the
rate of heat killed at the vertex $y$ by the  killing term $c$ is given by $  e^{-sL} (x,y)c(y)/m(y) $. The total amount of heat killed at $y$ till the time $t$ is then
given by $\int_0^t   e^{ -sL} (x,y)c(y)/m(y)  ds$. The total amount of
heat killed at all vertices by $c$ till the time $t$ is accordingly given by
$$ \sum_{y\in V}  \int_0^t   e^{- sL} (x,y)\frac{c(y)}{m(y)} ds = \int_0^t
\sum_{y\in V} e^{- sL} (x,y) \frac{c(y)}{m(y)} ds =  \int_0^t (e^{- sL} \frac{c}{m}) (x)
ds.  $$
Thus, the second term of $M$ describes the total amount of heat killed up to time $t$ within the graph.
Altogether,  $1- M_t$ is then the  amount of heat transported to the 'boundary' of the graph by the time $t$ and $M_t$  can be interpreted as the
amount of heat, which has not been transported to the boundary of the graph at
time $t$.
\smallskip

Our question concerning stochastic completeness  then becomes whether the quantity
$$ 1 - M_t$$
vanishes identically or not.  Our result reads (see  \cite{KL1} for a proof):

\begin{theorem}\label{main0} (Characterization of heat transfer to the
  boundary) Let $(V,b,c)$ be a weighted graph and $m$ a measure on $V$ of full
  support. Then, for any $\alpha >0$, the
  function
$$ w := \int_0^\infty \alpha e^{-t \alpha } ( 1 - M_t) dt$$
satisfies $0\leq w \leq 1$, solves  $(\widetilde{L} + \alpha) w = 0$, and
is the largest nonnegative function $l\leq 1$ with $(\widetilde{L} + \alpha) l
\leq  0$.  In particular, the following assertions are equivalent:
\begin{itemize}
 \item[(i)] For any $\alpha>0$ there exists  a nontrivial, nonnegative, bounded $l$ with  $(\widetilde{L} + \alpha) l \leq 0$.
\item[(ii)] For any $\alpha >0$ there exists a nontrivial, bounded $l$ with $(\widetilde{L} + \alpha) l = 0$.
\item[(iii)] For any $\alpha >0$ there exists an nontrivial, nonnegative, bounded $l$ with  $(\widetilde{L} + \alpha) l = 0$.
\item[(iv)] The function  $w$ is nontrivial.
\item[(v)] $M_t (x) <1$ for some $x\in V$ and some  $t>0$.
\item[(vi)] There exists a nontrivial, bounded, nonnegative $N : V\times
  [0,\infty)\longrightarrow [0,\infty)$ satisfying
  $\widetilde{L} N + \frac{d}{d t}  N =0$ and  $N_0 \equiv 0$.
\end{itemize}
\end{theorem}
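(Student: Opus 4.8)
The plan is to establish the three asserted properties of $w$ first and then the chain of equivalences.

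\emph{Properties of $w$.} The starting point is that $N_t:=1-M_t$ solves the heat equation with vanishing initial data. Applying Theorem~\ref{solution} to the bounded function $v\equiv 1$ gives $\frac{d}{dt}e^{-tL}1=-\widetilde{L}\,e^{-tL}1$; combining this with $\frac{d}{dt}\int_0^t e^{-sL}\frac{c}{m}\,ds=e^{-tL}\frac{c}{m}$ and the one-line identity $\widetilde{L} 1=\frac{c}{m}$ one finds $\frac{d}{dt}M_t=-\widetilde{L} M_t+\frac{c}{m}$, hence $\frac{d}{dt}N_t=-\widetilde{L} N_t$ and $N_0\equiv 0$. (Integrability of $\frac{c}{m}$ causes no trouble, since well-definedness and differentiability of $M$ are already available; alternatively this identity is first obtained on finite subgraphs, see below.) The bounds $0\le w\le 1$ then follow from $0\le N_t\le 1$ and $\int_0^\infty\alpha e^{-\alpha t}\,dt=1$. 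For $(\widetilde{L}+\alpha)w=0$ I would interchange $\widetilde{L}$ with $\int_0^\infty\alpha e^{-\alpha t}(\cdot)\,dt$ — legitimate by Fubini, since $|N_t(x)-N_t(y)|\le 1$ and $\sum_y b(x,y)<\infty$ by (b2) — then replace $\widetilde{L} N_t$ by $-\frac{d}{dt}N_t$ and integrate by parts, the boundary terms vanishing by $N_0\equiv 0$ and boundedness of $N_t$; this yields $\widetilde{L} w=-\alpha w$.

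\emph{Maximality.} Since $w$ is itself nonnegative, $\le 1$, and solves $(\widetilde{L}+\alpha)w=0\le 0$, it remains to show $l\le w$ for every nonnegative bounded $l\le 1$ with $(\widetilde{L}+\alpha)l\le 0$. A direct minimum principle on all of $V$ is unavailable — this is exactly the delicate situation for bounded functions noted in the remarks of Section~\ref{Minimum} — so the idea is to pass to finite $K\subseteq V$, where the minimum principle of Section~\ref{Minimum} does apply. For finite $K$ let $\widetilde{L}_K$ be the formal Laplacian of the subgraph $(K,\,b|_{K\times K},\,c_K)$ with $c_K(x)=c(x)+\sum_{y\notin K}b(x,y)$ (so that $Q_K$ and $L_K$ are those of Section~\ref{Minimum}), and set $M^{(K)}_t(x):=e^{-tL_K}1_K(x)+\int_0^t e^{-sL_K}\big(\frac{c}{m}1_K\big)(x)\,ds$ and $w^{(K)}:=\int_0^\infty\alpha e^{-\alpha t}(1_K-M^{(K)}_t)\,dt$; on the finite-dimensional space $\ell^2(K)$ every manipulation is honest, and one gets as before $0\le w^{(K)}\le 1$ and $(\widetilde{L}_K+\alpha)w^{(K)}=\beta_K$ with $\beta_K(x):=\frac{1}{m(x)}\sum_{y\notin K}b(x,y)\ge 0$ the extra boundary killing carried by $L_K$. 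The computation doing the work is the identity, for $x\in K$,
$$\widetilde{L}_K(l|_K)(x)=\widetilde{L} l(x)+\frac{1}{m(x)}\sum_{y\notin K}b(x,y)\,l(y),$$
which together with $(\widetilde{L}+\alpha)l\le 0$ and $0\le l\le 1$ gives $(\widetilde{L}_K+\alpha)(l|_K)\le\beta_K$ on $K$. Hence $(\widetilde{L}_K+\alpha)(w^{(K)}-l|_K)\ge 0$ on $K$, and, $K$ being finite, the minimum principle applied on each connected component of $K$ forces $w^{(K)}\ge l$ on $K$. Finally, by domain monotonicity and the resolvent/semigroup convergence of Section~\ref{Minimum} together with monotone convergence, $M^{(K)}_t\nearrow M_t$ and $w^{(K)}\to w$ as $K\nearrow V$, so $l\le w$.

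\emph{Equivalences.} For fixed $x$ the map $t\mapsto 1-M_t(x)$ is continuous, nonnegative, and integrates against $\alpha e^{-\alpha t}$ to $w(x)$, so $w_\alpha(x)=0$ iff $M_t(x)=1$ for all $t\ge 0$; this criterion is independent of $\alpha$, giving (iv)$\Leftrightarrow$(v) and showing $w_\alpha$ is nontrivial for one $\alpha$ iff for all $\alpha$. Then (iv)$\Rightarrow$(iii) by taking $l=w_\alpha$ for each $\alpha$, while (iii)$\Rightarrow$(i) and (iii)$\Rightarrow$(ii) are immediate. For (i)$\Rightarrow$(iv): the normalized function $\|l\|_\infty^{-1}l$ lies in the class of the maximality statement, hence $\le w_\alpha$, so $w_\alpha\neq 0$. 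For (ii)$\Rightarrow$(i): if $l$ is bounded and nontrivial with $(\widetilde{L}+\alpha)l=0$, multiplying by $m(x)$ and using $\big|\sum_y b(x,y)l(y)\big|\le\sum_y b(x,y)|l(y)|$ yields $(\widetilde{L}+\alpha)|l|\le 0$, and $|l|$ is nonnegative, bounded and nontrivial. For (v)$\Rightarrow$(vi): $N:=1-M$ is bounded, nonnegative, nontrivial by (v), with $\widetilde{L} N+\frac{d}{dt}N=0$ and $N_0\equiv 0$. For (vi)$\Rightarrow$(iii): given such $N$, the function $l:=\int_0^\infty\alpha e^{-\alpha t}N_t\,dt$ is nonnegative, bounded and nontrivial (if $N_{t_0}(x_0)>0$ then $N_t(x_0)>0$ for $t$ near $t_0$ by continuity), and the same Fubini and integration by parts as for $w$ give $(\widetilde{L}+\alpha)l=0$, so (iii) holds for every $\alpha$.

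\emph{Main obstacle.} I expect the maximality statement to be the crux: since the naive minimum principle fails for bounded functions on $V$, one is forced into the finite-exhaustion argument, and the delicate bookkeeping there is keeping careful track of the extra boundary killing $\beta_K$ introduced by $L_K$ and of the monotone convergence $M^{(K)}\to M$, the latter resting on the well-definedness facts for $M$ recalled before the theorem and on the approximation results of Section~\ref{Minimum}.
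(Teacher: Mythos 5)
Your proposal is correct and follows essentially the same route as the paper's (sketched) proof: your three building blocks coincide with the paper's steps (S1)--(S3) — the Laplace-transform/partial-integration computation, the fact that $1-M$ solves the heat equation with vanishing initial data, and the maximality of $w$ obtained by exhausting $V$ by finite subgraphs with the cut edges absorbed into the killing term $c_K$ — and your web of implications among (i)--(vi) is only a minor reorganization of the paper's. The additional details you supply (the explicit boundary term $\beta_K$, the minimum principle on components of finite $K$, and the $|l|$ argument for (ii)$\Rightarrow$(i)) are correct fillings-in of material the paper delegates to \cite{KL1}.
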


Let us give a short interpretation of the conditions appearing in the theorem.  Conditions (i), (ii) and (iii) deal with eigenvalues of $\widetilde{L}$ considered as  an operator on $\ell^\infty (V)$. Thus, they concern spectral theory in $\ell^\infty (V)$.  Condition (v) refers to loss of mass at infinity. Finally condition (vi) is about unique solutions of a partial difference equation. Thus,  the result connects properties from stochastic processes, spectral theory and partial difference equations.

\begin{proof}[Sketch of proof]
We refrain from giving a a complete proof of the theorem but rather discuss three key elements of the proof and how they fit together. These are the following three  steps:
\begin{itemize}
\item[(S1)] If $N: [0,\infty)\times V\longrightarrow \R$ is a bounded solution of $\frac{d}{dt} N = - \widetilde{L} N$, then $v = \int_0^\infty \alpha e^{-t \alpha} N_t dt$ is a solution to $(\widetilde{L} +\alpha) v =0$ for any $\alpha >0$.
    \item[(S2)] The function $N= 1 - M$ satisfies $0\leq N\leq 1$ and $\frac{d}{dt} N = - \widetilde{L} N$.
    \item[(S3)]  The function $w = \int_0^\infty \alpha e^{-t \alpha} (1- M_t)  dt$ is the largest  solution of $(\widetilde{L} +\alpha) v =0$ with $0\leq v\leq 1$.
\end{itemize}

The proof of the first step is a direct calculation via partial integration. The second step is a direct calculation but requires quite some care as the quantities are defined via sums and integrals whose convergence is not clear. The fact that $w$ of the last step is a solution follows from the second step. The minimality of the solution requires some care. It follows by approximating the graph via finite graphs. Here, a nontrivial issue is that this approximation  may actually cut infinitely many edges (as we do not have locally finite edge degree).
\medskip

Given the three steps, the proof of the theorem goes along the following line:
The implication (v) $\Longrightarrow $ (i) follows from Step (S1) and (S2). The  implication (i) $\Longrightarrow$ (v) follows from the maximality property  in Step (3).
The implication (v) $\Longrightarrow$ (vi) follows from Step (S2). The implication (vi) $\Longrightarrow$ (v) follows from Step (S1). The equivalence between (iv) and (v) is immediate from Step (S3). The equivalence between (i), (ii) and (iii) follows by taking suitable minima of (super-) solutions.
 \end{proof}

\begin{definition} The weighted graph $(V,b,c)$ is said to satisfy $\SI$ if one of the equivalent assertions of the theorem holds. If the graph is not $\SI$ it is said to satisfy  $\SC$.
\end{definition}

In the case of vanishing killing term  (i.e. $c\equiv 0$) $\SC$ and $\SI$ are just the standard definitions of stochastic completeness and stochastic incompleteness. By a slight abuse of language we  will call any graph satisfying $\SC$ stochastically complete and any graph satisfying $\SI$ stochastically incomplete.

\begin{coro} Assume the situation of the previous theorem. Let $\widetilde{L}$ be the operator associated to the graph $(V,b,c)$. If $\widetilde{L}$ gives rise to a  bounded operator on $\ell^2(V)$, then $(V,b,c)$ satisfies $\SC$.
\begin{proof} If $\widetilde{L}$ is bounded on $\ell^2 (V,m)$ it is bounded on $\ell^\infty (V)$ by Theorem \ref{boundednesthm}. Then, the spectrum of $\widetilde{L}$ on $\ell^\infty$   is bounded and hence its set of eigenvalues is bounded as well. Thus,  (ii) of the theorem must fail (for large $\alpha$).
\end{proof}
\end{coro}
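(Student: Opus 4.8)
The plan is to reduce the assertion to the failure of condition (ii) of Theorem \ref{main0}, exploiting the boundedness transfer from $\ell^2$ to $\ell^\infty$ furnished by Theorem \ref{boundednesthm}. First I would note that, since $\widetilde{L}$ restricts to a bounded operator on $\ell^2(V,m)$, assertion (iv) of Theorem \ref{boundednesthm} gives that the restriction of $\widetilde{L}$ to $\ell^\infty(V)$ is a bounded operator as well, with operator norm at most $2C$, where $C=\sup_{x\in V}d(x)<\infty$. Here one uses in passing that every bounded function $u$ automatically lies in the domain $\widetilde{F}$ of $\widetilde{L}$, because $\sum_{y\in V}|b(x,y)u(y)|\leq \|u\|_\infty\sum_{y\in V}b(x,y)<\infty$ by (b2); thus the action of $\widetilde{L}$ on $\ell^\infty(V)$ is unambiguous and agrees with the bounded operator in question.

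The second ingredient is the elementary fact that a bounded linear operator $A$ on a Banach space has point spectrum inside the closed disk of radius $\|A\|$: if $Au=\mu u$ with $u\neq 0$, then $|\mu|\,\|u\|=\|Au\|\leq\|A\|\,\|u\|$, so $|\mu|\leq\|A\|$. Applying this to the bounded restriction of $\widetilde{L}$ to $\ell^\infty(V)$, any nontrivial bounded $l$ with $(\widetilde{L}+\alpha)l=0$ is an eigenfunction of $\widetilde{L}$ on $\ell^\infty(V)$ with eigenvalue $-\alpha$, and hence can exist only for $\alpha\leq 2C$.

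Combining the two: condition (ii) of Theorem \ref{main0} would require such an $l$ to exist for \emph{every} $\alpha>0$. Picking any $\alpha>2C$ contradicts the norm bound just obtained, so (ii) fails. By the equivalence of (i)--(vi) in Theorem \ref{main0}, the graph does not satisfy $\SI$, i.e.\ it satisfies $\SC$. I do not expect a genuine obstacle here — the proof is essentially a two-line deduction from the two cited theorems — the only point meriting care being the verification that the bounded eigenfunction appearing in (ii) of Theorem \ref{main0} really is an eigenfunction of the bounded operator produced by Theorem \ref{boundednesthm}, so that the spectral radius estimate applies; this holds because that operator is by construction the restriction of $\widetilde{L}$.
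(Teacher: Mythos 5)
Your proof is correct and follows essentially the same route as the paper: transfer boundedness from $\ell^2(V,m)$ to $\ell^\infty(V)$ via Theorem \ref{boundednesthm}, observe that the eigenvalues of the resulting bounded operator are confined to a bounded set, and conclude that condition (ii) of Theorem \ref{main0} fails for large $\alpha$. Your additional checks (that bounded functions lie in $\widetilde{F}$ and the explicit bound $2C$ on the spectral radius) merely make explicit what the paper leaves implicit.
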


\noindent\textbf{Remark.} (a) The corollary shows that stochastic completeness is  a phenomenon for unbounded operators.

\noindent(b) The corollary  generalizes the results of Dodziuk/Matthai \cite{DM} and Wojciechowski \cite{Woj1}. It is furthermore relevant as its proof  gives an abstract i.e., spectral theoretic reason for stochastic completeness in the case of bounded operators.
\medskip

Let us finish this section by discussing how the existence of $\alpha>0$ and $t>0$ and $x\in V$ with certain properties in the above theorem is  actually equivalent to the fact that all $\alpha >0$ ,  $t>0$ and $x\in V$ have these properties. We first discuss the situation concerning the $\alpha$'s.

\begin{prop} Let $(V,b,c)$ be a weighted graph and $m$ a measure on $V$ of full  support. Then, the following are equivalent:
\begin{itemize}
\item[(i)] For any $\alpha>0$ there exists  a nontrivial, nonnegative, bounded $l$ with  $(\widetilde{L} + \alpha) l \leq 0$.
\item[(ii)]  For some  $\alpha>0$ there exists  a nontrivial, nonnegative, bounded $l$ with  $(\widetilde{L} + \alpha) l \leq 0$.
\end{itemize}
\begin{proof} It suffices to show the implication (ii) $\Longrightarrow$ (i): By the maximality  property  of the function $w =\int_0^\infty \alpha e^{-t \alpha} (1 - M_t) dt$ discussed in the third step of the proof of the main result, (ii) implies that  $M_t (x)<1$ for some $x\in V$ and $t>0$. Now, the claim (i) follows from the second step discussed in the proof of the main result.
\end{proof}
\end{prop}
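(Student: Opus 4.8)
The plan is to prove only the nontrivial implication (ii) $\Longrightarrow$ (i), since (i) $\Longrightarrow$ (ii) is immediate. The key observation is that among the six equivalent conditions of Theorem~\ref{main0}, condition (v) --- namely $M_t(x)<1$ for some $x\in V$ and some $t>0$ --- makes no reference to $\alpha$ at all. So the strategy is: pass from the hypothesis of (ii), which is stated for one fixed $\alpha_0>0$, to the $\alpha$-free condition (v), and then pass back from (v) to the conclusion of (i), which is stated for all $\alpha>0$. Both passages are instances of Theorem~\ref{main0}.

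First I would take the function $l$ furnished by (ii): a nontrivial, nonnegative, bounded function with $(\widetilde L+\alpha_0)l\le 0$ for some fixed $\alpha_0>0$. Since $l$ is bounded and $\sum_{y\in V}b(x,y)<\infty$ for each $x$, we have $l\in\widetilde F$, so $(\widetilde L+\alpha_0)l$ is well defined. Rescaling $l$ by a positive constant (the inequality $(\widetilde L+\alpha_0)l\le 0$ is homogeneous) we may assume $0\le l\le 1$. Now I invoke Theorem~\ref{main0} with $\alpha=\alpha_0$: the function $w_{\alpha_0}=\int_0^\infty \alpha_0 e^{-t\alpha_0}(1-M_t)\,dt$ is the \emph{largest} nonnegative function $v\le 1$ satisfying $(\widetilde L+\alpha_0)v\le 0$, hence $l\le w_{\alpha_0}$. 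As $l$ is nontrivial and nonnegative, $w_{\alpha_0}$ is nontrivial, i.e.\ condition (iv) of Theorem~\ref{main0} holds; by the equivalence (iv)$\Leftrightarrow$(v) of that theorem we conclude that $M_t(x)<1$ for some $x\in V$ and some $t>0$.

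Finally, condition (v) does not involve $\alpha$, so it holds outright, and a second application of Theorem~\ref{main0} --- this time the implication (v)$\Longrightarrow$(i) --- yields exactly the conclusion of (i): for every $\alpha>0$ there is a nontrivial, nonnegative, bounded $l$ with $(\widetilde L+\alpha)l\le 0$ (one may even take $l=w_\alpha$). I do not expect a genuine obstacle here; the argument is essentially a bookkeeping of which conditions in Theorem~\ref{main0} are $\alpha$-dependent. The only points needing a line of care are the reduction to $0\le l\le 1$ by scaling and the verification that $l\in\widetilde F$ so that $\widetilde L l$ is meaningful. Alternatively one could bypass the maximality statement and argue directly via Step (S2), noting that $1-M$ is a bounded solution of the heat equation, and via Step (S1) that integrating it against $\alpha e^{-t\alpha}$ produces, for every $\alpha>0$, a nonnegative bounded solution of $(\widetilde L+\alpha)v=0$, which is nontrivial precisely when $M_t(x)<1$ for some $x$ and $t$.
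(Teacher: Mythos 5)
Your argument is correct and follows essentially the same route as the paper: you use the maximality property of $w_{\alpha_0}$ (Step (S3) of the proof of Theorem \ref{main0}) to pass from the hypothesis of (ii) to the $\alpha$-independent condition (v), and then the implication (v) $\Longrightarrow$ (i) of that theorem to conclude. The only additions are the harmless normalization $0\le l\le 1$ and the check that $l\in\widetilde F$, which the paper leaves implicit.
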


We now  show that loss of mass in one point at one time is equivalent to loss of mass in all points at all times (if the graph is connected). For locally finite graphs this is discussed in \cite{Woj1}.

\begin{prop} Let $(V,b,c)$ be a connected  weighted graph and $m$ a measure on $V$ of full
  support.  Let $M$ be defined as above.  Then,  the following assertions are equivalent:
  \begin{itemize}
  \item[(i)] There exist $x\in V$ and $t>0$ with $M_t (x) < 1$.
  \item[(ii)] $M_t (x) <1$ for all $x\in V$ and all $t>0$
  \end{itemize}
\begin{proof}  The implication (ii) $\Longrightarrow$ (i) is clear. It remains to show the reverse implication. A direct calculation (invoking $\int_0^{t+s} ...dr = \int_0^s ...dr + \int_s^{t+s} ...dr$)  shows that
$$ M_{t+s}  = e^{-s L} M_t + \int_0^s e^{- r L} \frac{c}{m}\, dr.$$
This easily gives that
\begin{itemize}
\item[(1)] $M_t \equiv 1$ for some $t>0$ implies $M_{nt} \equiv 1$ for all $n\in \NN$.
\item[(2)] $M_t \neq 1$ for some $t>0$  implies that  $M_{t+s} <1$ for all $s>0$.
\end{itemize}
(Here $(1)$ follows by induction and $(2)$ follows as $M_{t} \neq 1$ implies $M_t \leq 1$ and $M_t (x)< 1$ for some $x\in V$. As the graph is connected  this implies $e^{-s L} M_t < e^{s L} 1$ and the statement follows.)

Assume now that $M_t (x) <1$ for some $x\in V$ and $t>0$.  We consider $M_r$ for  $r>t$ and for $r<t$ separately:  By $(2)$,   $M_r <1$ for all $r>t$. Assume that $M_r =1$ for some $0 <r< t$, then $M_s=1$ for all $s \leq r$ by $(2)$. Hence, by $(1)$ $M_{ns}=1$ for all $n\in \N$ and $0< s\leq r$. This gives $M_r =1$ for all $r>0$ which contradicts $M_t \neq 1$. Thus, $M_r \neq 1$ for all $0< r < t$. Hence, by (2) $M_r <1$ for all $0< r \leq t$.
\end{proof}
\end{prop}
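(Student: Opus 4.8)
The implication (ii) $\Longrightarrow$ (i) is immediate, so the plan is to prove (i) $\Longrightarrow$ (ii); the tool will be the flow structure of $M$ combined with the positivity improving property of $e^{-tL}$ on connected graphs (Theorem~\ref{positivityimproving}). First I would record a Chapman--Kolmogorov type identity: for all $s,t\ge 0$,
$$ M_{t+s} = e^{-sL} M_t + \int_0^s e^{-rL}\frac{c}{m}\,dr. $$
Starting from $M_t = e^{-tL}1 + \int_0^t e^{-rL}(c/m)\,dr$ this follows by splitting $\int_0^{t+s}=\int_0^s+\int_s^{t+s}$, substituting $r\mapsto r+s$ in the second integral, and using $e^{-(t+s)L}=e^{-sL}e^{-tL}$ together with the fact that $e^{-sL}$ may be pulled out of the integral. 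Some care is needed since $c/m$ and a priori $M_t$ need not lie in $\ell^2(V,m)$, so this should be read with the semigroup in its $\ell^\infty$ (equivalently $\ell^1$) incarnation; the bound $0\le M\le 1$ keeps everything well defined.

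From this identity and $0\le M\le 1$ I would extract two facts. (1) If $M_t\equiv 1$ for some $t>0$, then $M_{nt}\equiv 1$ for all $n\in\NN$: indeed $M_{(n+1)t}=e^{-tL}M_{nt}+\int_0^t e^{-rL}(c/m)\,dr$, so if $M_{nt}\equiv 1$ the right-hand side equals $M_t\equiv 1$, and induction concludes. (2) If $M_t\not\equiv 1$ for some $t>0$, then $M_{t+s}<1$ everywhere for every $s>0$: here $0\le M_t\le 1$ and $M_t(x)<1$ for some $x$ give $1-M_t\ge (1-M_t(x))\delta_x\ge 0$ pointwise, whence $e^{-sL}(1-M_t)\ge (1-M_t(x))e^{-sL}\delta_x>0$ everywhere by positivity preservation and the positivity improving property applied to $\delta_x\in\ell^2(V,m)$; thus $e^{-sL}M_t<e^{-sL}1$ pointwise, and adding $\int_0^s e^{-rL}(c/m)\,dr$ to both sides yields $M_{t+s}<M_s\le 1$.

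Finally I would assemble these facts. Assume (i), i.e. $M_t\not\equiv 1$ for some $t>0$. For $r>t$, fact (2) gives $M_r<1$ everywhere at once. For $0<r\le t$ I first claim that $M_{r'}\not\equiv 1$ for every $0<r'<t$: if $M_{r'}\equiv 1$ for some $0<r'<t$, then by (2) no $0<s<r'$ can have $M_s\not\equiv 1$ (otherwise $M_{r'}=M_{s+(r'-s)}<1$), so $M_s\equiv 1$ for all $0<s\le r'$; then (1) forces $M_q\equiv 1$ for every $q>0$ (write $q=n(q/n)$ with $q/n\le r'$), contradicting $M_t\not\equiv 1$. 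Granting the claim, for $0<r\le t$ pick $r'$ with $0<r'<r$; then $M_{r'}\not\equiv 1$, and (2) gives $M_r=M_{r'+(r-r')}<1$ everywhere. This covers all $r>0$, which is (ii).

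The step I expect to be the main obstacle is not the bookkeeping above but the careful justification of the flow identity and of fact (2) in the correct function space: since $c/m$ need not be bounded and $M_t$ need not be square-summable, one has to work with the $\ell^\infty$-extension of the semigroup, and the strict positivity in (2) must be routed through $\delta_x\in\ell^2(V,m)$ rather than applied directly to $1-M_t$.
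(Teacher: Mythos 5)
Your proposal is correct and follows essentially the same route as the paper: the same flow identity $M_{t+s}=e^{-sL}M_t+\int_0^s e^{-rL}\frac{c}{m}\,dr$, the same two auxiliary facts (propagation of $M\equiv 1$ along multiples and strict decay after any time where $M\not\equiv 1$, via the positivity improving property on connected graphs), and the same case split over $r>t$ and $0<r\le t$ with the same contradiction argument. The extra care you take with the $\ell^\infty$-incarnation of the semigroup and with routing strict positivity through $\delta_x$ only makes explicit what the paper leaves implicit.
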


\noindent\textbf{Acknowledgements.} It is our great pleasure to acknowledge fruitful and stimulating discussions with Peter Stollmann, Radek Wojciechowski, Andreas Weber and Jozef Dodziuk on the topics discussed in the paper.


\begin{thebibliography}{10}

\bibitem{BD}   A.~Beurling, J.~Deny.  \textit{Espaces de Dirichlet. I. Le cas \'{e}l\'{e}mentaire}. Acta Math.,  99 (1958),  203--224.

\bibitem{BD2}  A.~Beurling, J.~Deny.  \textit{Dirichlet spaces}.
 Proc. Nat. Acad. Sci. U.S.A.,   45  (1959),  208--215.


\bibitem{BH}
N.~Bouleau, F.~Hirsch. {Dirichlet forms and analysis on {W}iener space}.  Volume~14 of
  { de Gruyter Studies in Mathematics},  Walter de Gruyter \& Co., Berlin, 1991.



\bibitem{Chu} F.~R.~K.~Chung.  Spectral Graph Theory.  CBMS Regional Conference Series in Mathematics, 92,  American Mathematical Society, Providence, RI, 1997.

\bibitem{CGY} F.~R.~K.~Chung,  A.~Grigoryan, S.-T.~Yau.   \textit{Higher eigenvalues and isoperimetric inequalities on Riemannian manifolds and graphs}.  Comm. Anal. Geom.,   8  (2000),  No.~5, 969--1026.


\bibitem{Col} Y.~Colin de Verdi\`{e}re.  Spectres de graphes.   Soc. Math.  France, Paris,  1998.

\bibitem{Dav1} E.~B.~Davies.  Heat kernels and spectral theory. Cambridge
  University press, Cambridge, 1989.


\bibitem{Dav3} E.~B.~Davies. Linear operators and their spectra.
Cambridge Studies in Advanced Mathematics, 106. Cambridge University Press, Cambridge,  2007.

\bibitem{Dod0} J.~Dodziuk. \textit{Difference Equations, isoperimetric inequality and transience of  certain random walks}. Trans. Amer. Math. Soc., 284 (1984), No.~2,   787--794.

\bibitem{Dod} J.~Dodziuk. \textit{Elliptic operators on infinite graphs}.   Analysis, geometry and topology of elliptic operators,  353--368, World Sci. Publ., Hackensack, NJ, 2006.


\bibitem{DK} J.~Dodziuk, W.~S.~Kendall. \textit{Combinatorial Laplacians and isoperimetric inequality}. From local times to global geometry, control and physics (Coventry, 1984/85),  68--74, Pitman Res. Notes Math. Ser., 150, Longman Sci. Tech., Harlow, 1986.


\bibitem{DM} J.~Dodziuk, V.~Matthai. \textit{Kato's inequality and asymptotic spectral properties for discrete magnetic Laplacians}. The ubiquitous heat kernel,  69--81, Contemp. Math., 398, Amer. Math. Soc., Providence, RI, 2006.




\bibitem{Fel} W.~Feller. \textit{On boundaries and lateral conditions for the Kolmogorov differential equations}.  Ann. of Math. (2),  65  (1957),  527--570.



\bibitem{Fuj} K.~Fujiwara. \textit{Laplacians on rapidly branching trees}.  Duke Math Jour., 83 (1996), No.~1,  191-202.




\bibitem{Fuk} M.~Fukushima, Y.~Oshima, M.~ Takeda.  Dirichlet forms and symmetric Markov processes. de Gruyter Studies in Mathematics, 19. Walter de Gruyter \& Co., Berlin, 1994.



\bibitem{Gri} A.~Grigor'yan. \textit{Analytic and geometric background of reccurrence and non-explosion of the brownian motion on riemannian manifolds}.  Bull. Am. Math. Soc.,  36  (1999),  No.~2, 135--249.

\bibitem{HK} S.~Haeseler, M.~Keller, \textit{Generalized solutions and spectrum for Dirichlet forms on graphs},  to appear in to appear in Boundaries and Spectral Theory, Progress in Probability, Birkhäuser, preprint 2010,  arXiv:1002.1040.


\bibitem{HJL} O.~H{\"a}ggstr{\"o}m, J.~Jonasson, R.~Lyons. \textit{Explicit isoperimetric constants and phase transitions in the     random-cluster model}.  Ann. Probab.,  30  (2002),  No.~1, 443--473.



\bibitem{HiShi} Y.~Higuchi, T.~Shirai. \textit{Isoperimetric constants of $(d,f)$-regular planar graphs}.  Interdiscip. Inform. Sci.,  9  (2003),  No.~2, 221--228.



\bibitem{Jor} P.~E.~T.~Jorgensen. \textit{Essential selfadjointness of the graph-Laplacian}.  J. Math. Phys.,  49  (2008),  No.~7, 073510, 33p.



\bibitem{Kel} M.~Keller. \textit{The essential spectrum of Laplacians on rapidly branching tesselations}.  Math. Ann.,  346  (2010), No.~1,  51--66.


\bibitem{KL1} M.~Keller, D.~Lenz. \textit{Dirichlet forms and stochastic completeness of graphs and subgraphs}. to appear in  J. Reine Angew. Math., preprint 2009, arXiv:0904.2985.

\bibitem{KP} M.~Keller, N.~Peyerimhoff. \textit{Cheeger constants, growth and spectrum of locally tessellating planar graphs}.  to appear in Math. Z., arXiv:0903.4793.

\bibitem{Mo} B.~Mohar. \textit{Light structures in infinite planar
    graphs without the strong isoperimetric property}.
  Trans. Amer. Math. Soc.,  354 (2002), No.~8, 3059--3074.


\bibitem{MR}
Z.-M.~Ma and M.~R{\"o}ckner. Introduction to the theory of (non-symmetric) Dirichlet
  forms.  Springer-Verlag, Berlin,  1992.


\bibitem{MS} B.~Metzger, P.~Stollmann. \textit{Heat kernel estimates on weighted
  graphs}.   Bull. London Math. Soc.,  32  (2000), No.~4,  477--483.



\bibitem{Reu} G.~E.~H.~Reuter. \textit{Denumerable Markov processes and the associated contraction semigroups on $l$}.  Acta Math., 97 (1957),  1--46.

\bibitem{Stu}
K.-T.~Sturm. textit{Analysis on local Dirichlet spaces. I: Recurrence, conservativeness
  and $L\sp p$-Liouville properties}.  J. Reine Angew. Math., 456  (1994), No.~ 173--196.

\bibitem{Sto} P.~Stollmann. \textit{A convergence theorem for Dirichlet forms with applications to boundary
 value problems with varying domains}.
 Math. Z.,  219  (1995),  No. 2, 275--287.

\bibitem{SV} P.~Stollmann, J.~Voigt. \textit{Perturbation of Dirichlet forms by measures}.
 Potential Anal.  5  (1996),  No. 2, 109--138.

\bibitem{Ura} H.~Urakawa. \textit{The spectrum of an infinite graph}.
 Can. J. Math.,  52  (2000),  No.~5, 1057--1084.


\bibitem{Web} A.~Weber. \textit{Analysis of the physical Laplacian and the heat flow on
  a locally finite graph}.  J. Math. Anal. Appl.,  370  (2010),  no. 1, 146--158.


\bibitem{Woj1} R.~K.~Wojciechowski. Stochastic completeness of graphs,  PhD
thesis, 2007. arXiv:0712.1570v2.

\bibitem{Woj2} R.~K.~Wojciechowski. \textit{Heat kernel and
essential spectrum of infinite graphs}.   Indiana Univ. Math. J.,  58  (2009),  No.~3, 1419--1441.

 \bibitem{Woj3} R.~K.~Wojciechowski. \textit{Stochastically Incomplete Manifolds and Graphs}.  to appear in to appear in Boundaries and Spectral Theory, Progress in Probability, Birkhäuser, preprint 2009, arXiv:0910.5636.

\end{thebibliography}
\end{document}